\newtheorem{theorem}{Theorem}[section]
\newtheorem*{theorem*}{Theorem}
\newtheorem{definition}{Definition}[section]
\newtheorem{corollary}{Corollary}[section]
\newtheorem{proposition}{Proposition}[section]
\theoremstyle{definition}
\newtheorem{remark}{Remark}[section]
\newcommand{\R}{\mathbb R}
\newcommand{\calC}{\mathcal C}
\newcommand{\calL}{\mathcal L}
\newcommand{\dvol}{ d\text{Vol}_{g}}
\begin{document}

\title[The Boundary Yamabe Problem with Minimal Boundary Case]{The Boundary Yamabe Problem, I: Minimal Boundary Case}
\author[J. Xu]{Jie Xu}
\address{
Department of Mathematics and Statistics, Boston University, Boston, MA, USA}
\email{xujie@bu.edu}

\date{}	
						% Activate to display a given date or no date

\maketitle

\begin{abstract} We apply iteration schemes and perturbation methods to provide a complete solution of the boundary Yamabe problem with minimal boundary scenario, or equivalently, the existence of a real, positive, smooth solution of $ -\frac{4(n -1)}{n - 2} \Delta_{g} u + S_{g} u = \lambda u^{\frac{n+2}{n - 2}} $ in $ M $, $ \frac{\partial u}{\partial \nu} + \frac{n-2}{2} h_{g} u = 0 $ on $ \partial M $. Thus $ g $ is conformal to to the metric $ \tilde{g} = u^{\frac{4}{n -2}} g $ of constant scalar curvature $ \lambda $ with minimal boundary. In contrast to the classical method of calculus of variations with assumptions on Weyl tensors and classification of types of points on $ \partial M $, the boundary Yamabe problem is fully solved here in three cases classified by the sign of the first eigenvalue $ \eta_{1} $ of the conformal Laplacian with Robin condition. When $ \eta_{1} < 0 $, a pair of global sub-solution and super-solution are constructed. When $ \eta_{1} > 0 $, a perturbed boundary Yamabe equation $ -\frac{4(n -1)}{n - 2} \Delta_{g} u_{\beta} + \left( S_{g} + \beta \right) u_{\beta} = \lambda_{\beta} u_{\beta}^{\frac{n+2}{n - 2}} $ in $ M $, $ \frac{\partial u_{\beta}}{\partial \nu} + \frac{n-2}{2} h_{g} u_{\beta} = 0 $ on $ \partial M $ is solved with $ \beta < 0 $. The boundary Yamabe equation is then solved by taking $ \beta \rightarrow 0^{-} $. The signs of scalar curvature $ S_{g} $ and mean curvature $ h_{g} $ play important roles in this existence result. 
\end{abstract}

% This is the beginning of introduction.
\section{Introduction}
In this article, we completely solve the boundary Yamabe problem for the minimal boundary case on compact manifolds $ (\bar{M}, g) $ with smooth boundary, $ \dim \bar{M} \geqslant 3 $ by an iteration scheme and a perturbation method. The iteration scheme is inspired by earlier works on either a local Riemannian domain $ (\Omega, g) $ \cite{XU2}, or on closed manifolds $ (M, g) $ \cite{XU3}. A similar iteration method is also used to solve Einstein vacuum equation \cite{Hintz}, \cite{HVasy} and nonlinear Laplace equation \cite{Xu}, with a long history in PDE theory dating back to \cite{Moser1, Moser2}. A modification of the monotone iteration method, due to \cite{SA}, is applied here for the Robin boundary condition on manifolds. In the most difficult case where the first eigenvalue $ \eta_{1} $ of the conformal Laplacian is positive, a perturbation method is introduced to solve the perturbed boundary Yamabe equation $ -\frac{4(n -1)}{n - 2} \Delta_{g} u_{\beta} + \left( S_{g} + \beta \right) u_{\beta} = \lambda_{\beta} u_{\beta}^{\frac{n+2}{n - 2}} $ in $ M $, $ \frac{\partial u_{\beta}}{\partial \nu} + \frac{n-2}{2} h_{g} u_{\beta} = 0 $ on $ \partial M $, which is based on a local solvability of perturbed Yamabe equation with Dirichlet boundary condition and monotone iteration scheme. One advantage of this local analysis is to bypass the role of Weyl tensor both in interior points of the manifolds, and to avoid the classification of boundary points and the vanishing of the Weyl tensors at boundary. In the second paper in this series, we solve the most general case by replacing the zero mean curvature condition with constant mean curvature.
\medskip

In 1992, Escobar \cite{ESC} proposed the following generalization of the classical Yamabe problem on closed manifolds, which is called the boundary Yamabe problem or Escobar problem, and is a far reaching generalization of the uniformization theorem for surfaces:
\medskip

{\bf The Boundary Yamabe Problem.} {\it Given a compact Riemannian manifold $ (\bar{M}, g) $ of dimension $ n \geqslant 3 $ with interior $ M $ and smooth boundary $ \partial M $, there exists a metric $ \tilde{g} $ conformal to $ g $ having constant scalar curvature and minimal boundary.}       
\medskip
  
Let $ S_{g} $ be the scalar curvature  of $g$ and $ h_{g} $ be the mean curvature on $ \partial M $, and let $ \tilde{S} $, $ \tilde{h} $ be the scalar curvature and mean curvature of the conformal metric $ \tilde{g} = e^{2f} g $, respectively. Let $ \nu $ be the outward normal vector field along $ \partial M $. Set $ e^{2f} = u^{p-2} $, where $ p = \frac{2n}{n - 2} $ and $ u >0 $. Then
\begin{equation}\label{intro:eqn1}
\begin{split}
\tilde{S} & = u^{1-p}\left(-4 \cdot \frac{n-1}{n-2} \Delta_{g} u + S_{g}u \right) \; {\rm in} \; M; \\
\tilde{h} & = e^{-f} \left( h_{g} + \frac{\partial f}{\partial \nu} \right) \; {\rm on} \; \partial M.
\end{split}
\end{equation}
\noindent 
Setting $ a = 4 \cdot \frac{n-1}{n-2} > 0 $, we have that 
$ \tilde{g} = u^{p-2} g $ has constant scalar curvature $ \lambda $ and minimal boundary if and only if $ u $ satisfies the boundary Yamabe equation  
\begin{equation}\label{intro:eqn2}
\begin{split}
\Box_{g}u & : =  -a\Delta_{g} u + S_{g} u = \lambda u^{p-1} \; {\rm in} \; M; \\
B_{g} u & : = \frac{\partial u}{\partial \nu}  + \frac{2}{p-2} h_{g} u = 0 \; {\rm on} \; \partial M.
\end{split}
\end{equation}
where $\Delta_g = -d^*d$ is negative definite.
\medskip

If the requirement that $ \partial M $ is minimal with respect to $ \tilde{g} $ is dropped, a nontrivial mean curvature $ \tilde{h} $ has to be introduced. As Escobar mentioned in \cite{ESC2}, the general boundary Yamabe problem on compact Riemannian manifold with smooth boundary $ (M, g) $ is equivalent to solve the following PDE
\begin{equation}\label{intro:eqn3}
\begin{split}
& -a\Delta_{g} u + S_{g} u = \lambda u^{p-1} \; {\rm in} \; M; \\
& \frac{\partial u}{\partial \nu} = \frac{2}{p-2} \left( - h_{g} u + \tilde{h} u^{\frac{p}{2}} \right) \; {\rm on} \; \partial M.
\end{split}
\end{equation}
Here $ \lambda $ is the constant scalar curvature of $ \tilde{g} $ and $ \tilde{h} $ is the constant mean curvature on $ \partial M $ with respect to $ \tilde{g} $. Most cases of the boundary Yamabe problem with minimal boundary condition have been handled in works of \cite{Brendle}, \cite{Escobar2}, \cite{HL}, etc. In addition, \cite{Marques} among others worked on the non-minimal case where $ \lambda = 0 $ and $ \tilde{h} $ is a constant. However, there are several cases of the minimal boundary scenario still left open. In \cite{ESC}, the unsolved cases are when $ n \geqslant 6 $, $ M $ is not locally conformally flat, $ \partial M $ is umbilic, and the Weyl tensor vanishes identically on $ \partial M $. This result was improved in \cite{CS} with some extra restrictions on the manifolds. For closed manifolds, \cite{Aubin, PL} provided good survey with classical calculus of variation methods, while a direct analysis can be found in \cite{XU3}. On non-compact manifolds, results with certain restrictions are in e.g. \cite{Aviles-McOwen, Grosse, PDEsymposium}.
\medskip

The main result of this article, which states below, provides a complete solution of boundary Yamabe problem.
\begin{theorem*}
Let $ (\bar{M}, g) $ be a compact manifold with smooth boundary, $ \dim \bar{M} \geqslant 3 $. Let $ \eta_{1} $ be the first eigenvalue of the boundary value problem $ \Box_{g} u = \eta_{1} u $ in $ M $, $ B_{g} u = 0 $ on $ \partial M $. Then
\begin{enumerate}[(i).]
\item If $ \eta_{1} = 0 $, then (\ref{intro:eqn2}) has a real, positive solution $ u \in \calC^{\infty}(\bar{M}) $ with $ \lambda = 0 $; 
\item If $ \eta_{1} < 0 $, then (\ref{intro:eqn2}) has a real, positive solution $ u \in \calC^{\infty}(\bar{M}) $ with $ \lambda < 0 $; 
\item If $ \eta_{1} > 0 $, then (\ref{intro:eqn2}) has a real, positive solution $ u \in \calC^{\infty}(\bar{M}) $ with $ \lambda > 0 $.
\end{enumerate}
\end{theorem*}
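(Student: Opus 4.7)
I would organize the proof according to the sign of $\eta_1$, building everything on the positive first eigenfunction $\phi_1$ of $\Box_g u = \eta_1 u$, $B_g u = 0$ (whose strict positivity follows from the Hopf lemma for the Robin problem). In each case, sub- and super-solutions will be built from multiples of $\phi_1$, and a version of the monotone iteration method (after \cite{SA}, adapted to Robin boundary conditions on manifolds as in \cite{XU2, XU3}) will produce a smooth positive solution lying between them. Boundary elliptic regularity for the Robin problem then upgrades the solution to $\calC^\infty(\bar M)$.

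\textbf{Cases (i) and (ii).} In case (i), $\phi_1$ itself satisfies $\Box_g \phi_1 = 0$ and $B_g \phi_1 = 0$, hence is a smooth positive solution of (\ref{intro:eqn2}) with $\lambda = 0$. In case (ii), take $u_- = \epsilon \phi_1$ for $\epsilon$ small and $u_+ = M \phi_1$ for $M$ large. The Robin boundary condition holds with equality, while in the interior $\Box_g u_\pm = \eta_1 u_\pm$ is negative; the sub-solution inequality $\eta_1 \epsilon \phi_1 \leq \lambda \epsilon^{p-1} \phi_1^{p-1}$ and the super-solution inequality $\eta_1 M \phi_1 \geq \lambda M^{p-1} \phi_1^{p-1}$ (with $\lambda < 0$) reduce to $|\eta_1| \geq |\lambda| \epsilon^{p-2} \phi_1^{p-2}$ and $|\eta_1| \leq |\lambda| M^{p-2} \phi_1^{p-2}$, each valid in its respective regime. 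The monotone iteration then produces the desired $u$.

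\textbf{Case (iii).} The naive construction fails here: when $\eta_1 > 0$ and $\lambda > 0$, the two inequalities force $\epsilon$ large and $M$ small, violating $u_- \leq u_+$. The remedy announced in the abstract is to perturb by $\beta < 0$ in the potential. Since the first eigenvalue of $\Box_g + \beta$ with Robin condition equals $\eta_1 + \beta$, for $\beta$ sufficiently negative (respectively at the critical value $\beta = -\eta_1$) one applies case (ii) (respectively case (i)) to obtain a smooth positive solution $u_\beta$ of the perturbed problem. I would then let $\beta \to 0^-$ along a continuous family, establishing uniform $\calC^0$ upper and positive lower bounds on $u_\beta$ to extract a smooth positive limit $u$; testing the limit equation against $\phi_1$ together with $\eta_1 > 0$ then recovers $\lambda > 0$.

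\textbf{Main obstacle.} The heart of the argument is compactness as $\beta \to 0^-$: with $p - 1 = (n+2)/(n-2)$ Sobolev-critical, the family $\{u_\beta\}$ could a priori concentrate at an interior or boundary point, precisely the behaviour that forces the classical variational treatments into Weyl-tensor hypotheses and delicate point-by-point classifications on $\partial M$. The gamble here is that the direct monotone-iteration construction--together with the prescribed signs of $S_g$ and $h_g$, a boundary Harnack inequality for the Robin problem, and the pointwise bracketing by scaled copies of $\phi_1$--furnishes $L^\infty$ compactness without any Weyl-tensor assumption. I expect this bracketing step, and the preservation of strict positivity in the limit, to be the most delicate pieces of the argument.
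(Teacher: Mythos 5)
Your cases (i) and (ii) are essentially correct, and your construction for (ii) is in fact cleaner than the paper's. By taking $u_- = \epsilon\phi_1$ and $u_+ = M\phi_1$ you have $B_g u_\pm = 0$ exactly, so the boundary inequality for the monotone iteration is automatic. The paper instead uses a constant super-solution $u_+ = K_1$, whose boundary term is $B_g u_+ = \frac{2}{p-2}h_g K_1$; this forces $h_g \geqslant 0$ and so requires an extra preliminary conformal change (the paper's Theorem \ref{yamabe:thm3}) to arrange $h_g > 0$ before iterating. Your version bypasses that step. One caveat: the paper's monotone-iteration statement (Theorem \ref{global:thm3}) is itself hypothesized with $c(x) > 0$ on $\partial M$ — that sign is used both for the linear solvability (Theorem \ref{solve:thm2}) and for the comparison principle inside the iteration — so even with eigenfunction-based $u_\pm$ you would either still need $h_g > 0$ or need to re-run the iteration using the large-$\Lambda,\Lambda'$ solvability of Theorem \ref{solve:thm1} and absorb the boundary term by the trace inequality. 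Fixable, but not automatic.

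The serious gap is in case (iii). You produce $u_\beta$ only for $\beta \leqslant -\eta_1$ (via case (ii)) and $\beta = -\eta_1$ (via case (i)), and then say ``let $\beta \to 0^-$.'' But the interval $(-\eta_1, 0)$, where $\eta_{1,\beta} = \eta_1 + \beta > 0$, is exactly the situation you are trying to prove, and you have not constructed $u_\beta$ there. Nor can you continue from $\beta = -\eta_1$ upward: at that value the solution is $\phi_1$ itself with $\lambda_{\beta} = 0$, the linearization of $u \mapsto \Box_{g,\beta}u - \lambda_\beta u^{p-1}$ is $\Box_g - \eta_1$, which has $\phi_1$ in its kernel, and the constant $\lambda_\beta$ must change sign from negative to positive through $0$ at precisely this degenerate point. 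Neither the direct sub/super-solution bracketing nor a naive implicit-function continuation gets you across $\beta = -\eta_1$.

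The paper's resolution is genuinely different. It keeps $|\beta|$ small so that $\eta_{1,\beta} > 0$ and $\lambda_\beta > 0$ throughout, and constructs the sub-solution not from the eigenfunction but from a local Dirichlet problem $-a\Delta_g u + (S_g + \beta)u = \lambda_\beta u^{p-1}$, $u = 0$ on $\partial\Omega$, on a small interior domain $\Omega$ where $S_g < 0$ (Proposition \ref{boundary:prop7}); the super-solution is a scaled eigenfunction glued carefully to that local solution (Theorem \ref{yamabe:thm5}). Two further conformal-change lemmas (Theorems \ref{yamabe:thm3} and \ref{yamabe:thm6}) arrange $h_g > 0$ everywhere and $S_g < 0$ somewhere before this machinery is invoked. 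The limit $\beta \to 0^-$ is then obtained by weak $H^1$ compactness, with nontriviality coming from the Rayleigh-quotient lower bound $\lVert u_\beta \rVert_{\calL^p} \geqslant 1$ — not from pointwise $L^\infty$ bracketing, so your proposed ``boundary Harnack plus bracketing'' route for the limit is not what is actually used, though your instinct that critical-exponent concentration is the danger to be avoided is correct in spirit.
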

\medskip

Case (i) is a trivial case, since it's just an eigenvalue problem. Case (ii) is solved in Theorem \ref{yamabe:thm2} and \ref{yamabe:thm4}. We first get a solution of (\ref{intro:eqn2}) when $ h_{g} > 0 $ everywhere on $ \partial M $; then Theorem \ref{yamabe:thm3} says that the general case when $ \eta_{1} < 0 $ can be converted to the special case just mentioned. Case (iii) is solved in three steps: a perturbed boundary Yamabe equation $ -\frac{4(n -1)}{n - 2} \Delta_{g} u_{\beta} + \left( S_{g} + \beta \right) u_{\beta} = \lambda_{\beta} u_{\beta}^{\frac{n+2}{n - 2}} $ in $ M $, $ \frac{\partial u_{\beta}}{\partial \nu} + \frac{n-2}{2} h_{g} u_{\beta} = 0 $ on $ \partial M $ is solved in Theorem \ref{yamabe:thm5} with some $ \beta < 0 $; then the boundary Yamabe problem is solved in Theorem \ref{yamabe:thms} for $ \eta_{1} > 0 $, $ h_{g} > 0 $ everywhere on $ \partial M $ and $ S_{g} < 0 $ somewhere in $ M $;  lastly Theorem \ref{yamabe:thm7} shows that every general case for $ \eta_{1} > 0 $ reduces to the scenario in Theorem \ref{yamabe:thms}, due to Theorem \ref{yamabe:thm3} and \ref{yamabe:thm6}. Inspired by \cite{XU2} and \cite{XU3}, we prove a crucial local result with respect to a perturbation of $ \beta $ within a small enough interior domain $ \Omega \subset M $. As a crucial technical point, the small radius volume of geodesic balls is controlled by the scalar curvature, while the Weyl tensor does not influence the volume..
\medskip

In the classical calculus of variations approach, a solution of the boundary Yamabe problem is a minimizer of the functional
\begin{equation*}
Q(M) = \inf_{u \neq 0}  \frac{\int_{M} \left( a\lvert \nabla_{g} u \rvert^{2} + S_{g} u^{2} \right) \dvol + \int_{\partial M} \frac{2a}{p-2}h_{g} u^{2} dS}{\left( \int_{M} u^{p} \dvol \right)^{\frac{2}{p}}}
\end{equation*}
The existence of the minimizer relies heavily on showing $ Q(M) < Q(\mathbb{S}_{+}^{n}) $. The existence of of a minimizer is broken down into several cases, depending on whether Weyl tensor vanishes or not, and whether a boundary point is umbilic or not. In particular, the analysis of the Yamabe quotient near the boundary requires different test functions depending on the nature of the Weyl tensor, the existence of umbilic points, and the vanishing of Weyl tensor on $ \partial M $. In contrast, when $ \eta_{1} < 0 $ we apply the idea of Kazdan and Warner \cite{KW2} from the closed manifold case to construct global sub-solutions and super-solutions. Historically this is also an easy case.

The hard case is when $ \eta_{1} > 0 $. In the classical approach, subcritical solutions are constructed, i.e. the boundary Yamabe equations with subcritical nonlinear terms $ u^{s - 1}, s \in (2, p) $ are solved; then a limiting argument as $ s \rightarrow p^{-} $ is required. This limiting process as well as the proof of the positivity of the limit require $ Q(M) < Q(\mathbb{S}_{+}^{n}) $. In our method we bypass this subcritical argument by perturbing the coefficient of the zeroth order term of the differential operator, instead of perturbing the exponent $ p -1 $ of the nonlinear term. Fixing the exponent simplifies the limiting argument significantly, as only $ Q(M) \leqslant Q(\mathbb{S}_{+}^{n}) $ is required. In particular, we use the local to global analysis developed for closed manifolds \cite{XU3}: first, we construct a local solution of the perturbed Yamabe equation $ -a\Delta_{g} u + (S_{g} + \beta) u = \lambda u^{p-1} $, $ \beta < 0 $, in a small interior domain $ \Omega $ with trivial Dirichlet boundary condition; secondly, we apply monotone iteration scheme to obtain a global solution of perturbed Yamabe equation with Robin boundary condition $ B_{g} u = 0 $; finally, we pass to the limit $ \beta \rightarrow 0^{-} $ to obtain a solution. The local analysis in the first step is essential in the limiting argument.

Through the local analysis, iteration scheme and perturbation methods in this article and in \cite{XU3}, the Yamabe problem on closed manifolds and the Escobar problem on compact manifolds with boundary have a synchronized methodology: (i) the solvability in both cases are classified by the sign of the first eigenvalue $ \eta_{1} $ of conformal Laplacian only; (ii) when $ \eta_{1} < 0 $, both cases are solved by constructing a global subsolution and supersolution; (iii) when $ \eta_{1} > 0 $, both cases are solved by solving a perturbed PDE followed with a limiting argument which annihilates the perturbed term. In contrast, the classical arguments are asynchronous: historically the locally conformally flat cases are hardest cases on closed manifolds with dimensions $ n \geqslant 6 $; meanwhile the cases when $ \bar{M} $ is not locally conformally flat has been open on compact manifolds with boundary when $ \dim(\bar{M}) \geqslant 6 $.
\medskip

This article is organized as follows. In \S2, definitions and essential tools are listed and proved if necessary. In \S3, we first prove a global $ \calL^{p} $-regularity result  in Theorem \ref{global:thm1} for second order elliptic PDE with Robin condition by assuming the existence of the solution of this type of PDE. This work is based on a local result by Agmon, Douglis and Nirenberg \cite{Niren4}. A general elliptic estimate is also given. Assuming the injectivity of the second order elliptic operator, a specific $ \calL^{p} $ elliptic estimate is obtained in Theorem \ref{global:thm2}. These $ \calL^{p} $-regularity theory and $ \calL^{p} $-estimate are then used to prove the existence of the solution of $ -a\Delta_{g} u = F(x, u) $ with Robin condition by a monotone iteration method on compact manifolds with boundary in Theorem \ref{global:thm3}, provided the existence of corresponding linear elliptic PDE. In \S4, an existence theorem of the elliptic linear PDE $ -\Lambda \Delta_{g} u + \Lambda' u = f $ with Robin boundary condition is given. In \S5, the boundary Yamabe problem with minimal
boundary is fully solved in several steps. Corollary \ref{yamabe:cor1} handles the case $ \eta_{1} = 0 $; Theorem \ref{yamabe:thm2} and \ref{yamabe:thm4} handle the case $ \eta_{1} < 0 $; Theorem \ref{yamabe:thms} and \ref{yamabe:thm7} handle the case $ \eta_{1} > 0 $ by the crucial perturbation result in Theorem \ref{yamabe:thm5}. We end with some results, given in Theorem \ref{yamabe:thm3}, \ref{yamabe:thm6}, Corollary \ref{yamabe:cor2} and \ref{yamabe:cor3}, on when functions $ f_{1}, f_{2} $ can be the prescribed scalar curvature and mean curvature respectively, of a metric conformal to a given metric.
% This is the beginning of the section of preliminaries.
\section{The Preliminaries}
In this section, we list necessary definitions and results in order to solve this boundary Yamabe problem. Throughout this section, we consider the spaces with dimensions no less than $ 3 $.

Let $ \Omega $ be a connected, bounded, open subset of $ \R^{n} $ with smooth boundary $ \partial \Omega $ equipped with some Riemannian metric $ g $ that can be extended smoothly to $ \bar{\Omega} $. We call $ (\Omega, g) $ a Riemannian domain. Furthermore, let $ (\bar{\Omega}, g) $ be a compact manifold with boundary. 
\medskip

% Definition of Sobolev spaces.
Firstly we define Sobolev space on compact manifolds $ (\bar{M}, g) $ with interior $ M $ and smooth boundary $ \partial M $. The integer ordered Sobolev spaces defined on $ (\bar{M}, g) $ is defined on $ (M, g) $, where $ M $ is the interior. We also define Sobolev spaces on Riemannian domain $ (\Omega, g) $.
\begin{definition}\label{boundary:def1} Let $ (\bar{M}, g) $ be a compact Riemannian manifold with smooth boundary $ \partial M $ and interior $ M $, let $ \dim M = n $. Let $ d\omega $ be the Riemannian density with local expression $ \dvol $. Let $ dS $ be the induced boundary density on $ \partial M $. For real valued functions $ u $, we set: 

(i) 
For $1 \leqslant p < \infty $, 
\begin{align*}
\mathcal{L}^{p}(M, g)\ &{\rm is\ the\ completion\ of}\ \left\{ u \in \calC_{c}^{\infty}(\bar{M}) : \Vert u\Vert_{p,g}^p :=\int_{M} \left\lvert u \right\rvert^{p} d\omega < \infty \right\}; \\
\mathcal{L}^{p}(\Omega, g)\ &{\rm is\ the\ completion\ of}\ \left\{ u \in \calC_c^{\infty}(\Omega) : \Vert u\Vert_{p,g}^p :=\int_{\Omega} \left\lvert u \right\rvert^{p} d\text{Vol}_{g} < \infty \right\}.
\end{align*}

(ii)
For $\nabla$ the Levi-Civita connection of $g$, and for 
$ u \in \calC^{\infty}(M) $,
\begin{equation}\label{boundary:eqn1}
\lvert \nabla^{k} u \rvert_g^{2} := (\nabla^{\alpha_{1}} \dotso \nabla^{\alpha_{k}}u)( \nabla_{\alpha_{1}} \dotso \nabla_{\alpha_{k}} u).
\end{equation}

In particular, $ \lvert \nabla^{0} u \rvert^{2}_g = \lvert u \rvert^{2}_g $ and $ \lvert \nabla^{1} u \rvert^{2}_g = \lvert \nabla u \rvert_{g}^{2} $.

(iii) For $ s \in \mathbb{N}, 1 \leqslant p < \infty $,
\begin{equation}\label{boundary:eqn2}
\begin{split}
W^{s, p}(M, g) & = \left\{ u \in \mathcal{L}^{p}(M, g) : \lVert u \rVert_{W^{s, p}(M, g)}^{p} = \sum_{j=0}^{s} \int_{M} \left\lvert \nabla^{j} u \right\rvert^{p}_g d\omega < \infty \right\}; \\
W^{s, p}(\Omega, g) &= \left\{ u \in \mathcal{L}^{p}(\Omega, g) : \lVert u \rVert_{W^{s, p}(\Omega, g)}^{p} = \sum_{j=0}^{s} \int_{\Omega} \left\lvert \nabla^{j} u \right\rvert^{p}_g d\text{Vol}_{g} < \infty \right\}.
\end{split}
\end{equation}

Similarly, $ W_{0}^{s, p}(M, g) $ is the completion of $ \calC_{c}^{\infty}(M) $ with respect to the $ W^{s, p} $-norm. In particular, $ H^{s}(M, g) : = W^{s, 2}(M, g), s \in \mathbb{N}, 1 \leqslant p' < \infty $ are the usual Sobolev spaces, and we similarly define $H_{0}^{s}(M, g) $, $ W_{0}^{s, p}(\Omega, g) $ and $ H_{0}^{s}(\Omega, g) $.

(iv) With an open cover $ \lbrace U_{\xi}, \phi_{\xi} \rbrace $ of $ (\bar{M}, g) $ and a smooth partition of unity $ \lbrace \chi_{\xi} \rbrace $ subordinate to this cover, we can define the $ W^{s, p} $-norm locally, which is equivalent to the definition above.
\begin{equation*}
\lVert u \rVert_{W^{s, p}(M, g)} = \sum_{\xi} \lVert \left(\phi_{\xi}^{-1}\right)^{*} \chi_{\xi} u \rVert_{W^{s, p}(\phi_{\xi}(U_{\xi}), g)}.
\end{equation*}
\end{definition}
\medskip

% Conformal Invariance.
Let's denote the conformal Laplacian with the boundary condition to be
\begin{equation}\label{boundary:eqn3}
\Box_{g}u : = -a\Delta_{g} u + S_{g} u, B_{g} u : = \frac{\partial u}{\partial \nu} + \frac{2}{p - 2} h_{g} u, \forall u \in \calC_{c}^{\infty}(M).
\end{equation}
Let's denote the first eigenvalue of $ \Box_{g} $ with boundary condition $ B_{g} u = 0 $ to be $ \eta_{1} $, which is characterized by
\begin{equation}\label{boundary:eqn4}
\eta_{1} = \inf_{u \neq 0} \frac{\int_{M} a \lvert \nabla_{g} u \rvert^{2} d\omega + \int_{M} S_{g} u^{2} d\omega + \int_{\partial M} \frac{2a}{p - 2}h_{g}u^{2} dS}{\int_{M} u^{2} d\omega}.
\end{equation}
The following result is needed due to Escobar \cite{ESC}.
\begin{proposition}\label{boundary:prop1}\cite[Prop.~1.3.]{ESC}
Let $ \tilde{g} = u^{p-2} g $ be a conformal metric to $ g $. Let $ \eta_{1} $ and $ \tilde{\eta}_{1} $ be the first eigenvalue of $ \Box_{g} $ and $ \Box_{\tilde{g}} $ with boundary conditions $ B_{g} = 0 $ and $ B_{\tilde{g}} = 0 $, respectively. Then either the signs of $ \eta_{1} $ and $ \tilde{\eta}_{1} $ are the same or $ \eta_{1} = \tilde{\eta}_{1} = 0 $.
\end{proposition}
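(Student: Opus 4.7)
The plan is to exploit the classical conformal covariance of $\Box_g$ and $B_g$ in order to transport a first eigenfunction on one side to a test function for the Rayleigh quotient on the other, and then compare. The starting point is the pair of pointwise identities
\begin{equation*}
\Box_{\tilde g}\phi \;=\; u^{1-p}\,\Box_g(u\phi),\qquad B_{\tilde g}\phi \;=\; u^{-p/2}\,B_g(u\phi),
\end{equation*}
valid for any smooth $\phi$. Both follow directly from the conformal transformation formulas $\tilde S = u^{1-p}\Box_g u$ and $\tilde h=\tfrac{p-2}{2}u^{-p/2}B_g u$ implicit in (\ref{intro:eqn1})--(\ref{intro:eqn2}), together with the scaling $\partial_{\tilde\nu}=u^{-(p-2)/2}\partial_\nu$ of the conormal derivative under $\tilde g=u^{p-2}g$.

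Let $\phi_1>0$ be a first eigenfunction of $\Box_{\tilde g}$ subject to $B_{\tilde g}\phi_1=0$, with eigenvalue $\tilde\eta_1$; positivity of a first eigenfunction of a self-adjoint Robin problem of this type is standard from the fact that a minimizer of the Rayleigh quotient may be replaced by its absolute value and then shown strictly positive by the strong maximum principle. Setting $v:=u\phi_1>0$, the identities above give
\begin{equation*}
\Box_g v \;=\; \tilde\eta_1\,u^{p-2}\,v\;\;\text{in }M,\qquad B_g v \;=\; 0\;\;\text{on }\partial M.
\end{equation*}
Pairing with $v$ and integrating by parts, the numerator of $R_g(v)$ from (\ref{boundary:eqn4}) collapses, thanks to $B_g v=0$, to $\int_M v\,\Box_g v\,d\omega_g=\tilde\eta_1\int_M u^{p-2}v^2\,d\omega_g$. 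The conformal volume relation $d\omega_{\tilde g}=u^p d\omega_g$ then converts this to
\begin{equation*}
R_g(v) \;=\; \tilde\eta_1\cdot\frac{\int_M\phi_1^2\,d\omega_{\tilde g}}{\int_M u^2\phi_1^2\,d\omega_g},
\end{equation*}
where the coefficient is a strictly positive constant; hence $R_g(v)$ carries the same sign as $\tilde\eta_1$ (and vanishes exactly when $\tilde\eta_1=0$).

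Combining $\eta_1\leq R_g(v)$ with the symmetry under the inverse conformal change $g=(u^{-1})^{p-2}\tilde g$ (which lets me rerun the entire argument with the roles of $g$ and $\tilde g$ swapped) yields the implications
\begin{equation*}
\tilde\eta_1<0\;\Longleftrightarrow\;\eta_1<0,\qquad \tilde\eta_1=0\;\Longleftrightarrow\;\eta_1=0,
\end{equation*}
and the remaining case $\tilde\eta_1>0\Leftrightarrow\eta_1>0$ follows by elimination. This is exactly the statement.

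I expect the only real obstacle is bookkeeping of conformal weights: one must verify that the boundary weight $u^{-p/2}$ in $B_{\tilde g}\phi=u^{-p/2}B_g(u\phi)$ matches the normalization $\tfrac{2}{p-2}$ used in (\ref{intro:eqn2}), and that the boundary contribution arising from integrating $-a\Delta_g v$ by parts reconciles with the coefficient of $h_g v^2$ in the Rayleigh functional (\ref{boundary:eqn4}). Once that numerology is in place, the argument is essentially one integration by parts plus the symmetry step, and no deep analytic input is needed.
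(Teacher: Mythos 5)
Your argument is correct and is essentially the standard proof from Escobar's Proposition~1.3 in \cite{ESC}, which this paper cites without reproducing: transport the positive first eigenfunction via the conformal covariance identities $\Box_{\tilde g}\phi = u^{1-p}\Box_g(u\phi)$ and $B_{\tilde g}\phi = u^{-p/2}B_g(u\phi)$, compute the resulting Rayleigh quotient using $d\omega_{\tilde g}=u^p\,d\omega_g$, and close with the inverse conformal change. The one numerological wrinkle you flag is real but harmless: for the Rayleigh quotient in (\ref{boundary:eqn4}) to be the exact weak form of $\Box_g$ with $B_g=0$, the boundary term should carry the factor $a$ (i.e.\ $a\tfrac{2}{p-2}h_g u^2\,dS$), a normalization slip in the paper that does not affect the sign argument.
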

\medskip

% A local $ L^{p} $ regularity theory.
A local $ L^{p} $ regularity is required for some type of Robin boundary condition, due to Agmon, Douglis, and Nirenberg \cite{Niren4}.
\begin{proposition}\label{boundary:prop2}\cite[Thm.~7.3, Thm.~15.2]{Niren4} Let $ (\Omega, g) $ be a Riemannian domain where the boundary $ \partial \Omega $ satisfies Lipschitz condition. Let $ \nu $ be the outward unit normal vector along $ \partial \Omega $. Let $ L $ be the second order elliptic operator on $ \Omega $ with smooth coefficients up to $ \partial M $ and $ f \in \calL^{p}(\Omega, g) $, $ f' \in W^{1, p}(\Omega, g) $ for some $ p \in (1, \infty) $. Let $ u \in H^{1}(\Omega, g) $ be the weak solution of the following boundary value problem
\begin{equation}\label{boundary:eqn5}
L u = f \; {\rm in} \; \Omega, Bu : = \frac{\partial u}{\partial \nu} + c(x) u = f' \; {\rm on} \; \partial \Omega,
\end{equation}
where $ c \in \calC^{\infty}(\partial \Omega) $. Then $ u \in W^{2, p}(\Omega, g) $ and the following estimates holds provided $ u \in \calL^{p}(\Omega, g) $:
\begin{equation}\label{boundary:eqn6}
\lVert u \rVert_{W^{2, p}(\Omega, g)} \leqslant C^{*} \left( \lVert Lu \rVert_{\calL^{p}(\Omega, g)} + \lVert Bu \rVert_{W^{1, p}(\Omega, g)} +  \lVert u \rVert_{\calL^{p}(\Omega, g)} \right).
\end{equation}
Here the constant $ C^{*} $ depends on $ L, p $ and $ (\Omega, g) $.
\end{proposition}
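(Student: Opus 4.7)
The plan is to reduce the boundary value problem to the model situation on a half-space via boundary straightening and coefficient freezing, verify the Lopatinskii--Shapiro complementing condition for the Robin pair $(L, B)$, and then patch local $\calL^{p}$ estimates using a partition of unity. This is exactly the Agmon--Douglis--Nirenberg program specialized to a second order operator with Robin boundary data.

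First I would obtain the interior estimate. On any coordinate chart $U \Subset \Omega$, a freezing-coefficients argument combined with Calder\'on--Zygmund singular integral bounds on $\R^{n}$ yields $\|D^{2} v\|_{\calL^{p}(U)} \leq C(\|Lv\|_{\calL^{p}(U)} + \|v\|_{\calL^{p}(U)})$ for any $v \in \calC_{c}^{\infty}(U)$. Covering the interior by such charts and cutting off with smooth bump functions then provides the interior part of (\ref{boundary:eqn6}). Near each boundary point, I would straighten $\partial \Omega$ via a smooth diffeomorphism $\phi_{\xi} : U_{\xi} \to B^{+} \subset \{y \in \R^{n} : y^{n} \geq 0\}$. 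Under $\phi_{\xi}$, $L$ pulls back to a second order elliptic operator with smooth coefficients on $B^{+}$, and the boundary operator $Bu = \partial_{\nu} u + cu$ becomes $\pm \partial_{y^{n}} u$ plus lower order tangential and zeroth order terms with smooth coefficients.

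Second I would verify the complementing condition for the frozen-coefficient model problem at a boundary point $y_{0}$. Writing $L_{0} = -a^{ij}(y_{0}) \partial_{i} \partial_{j}$ with $a^{nn}(y_{0}) > 0$, for every tangential Fourier mode $\xi' \in \R^{n-1} \setminus \{0\}$ the characteristic polynomial $a^{ij}(y_{0}) \xi_{i} \xi_{j}$ regarded as a polynomial in $\tau = \xi_{n}$ has exactly one root $\tau_{+}(\xi')$ with $\mathrm{Im}\, \tau_{+} > 0$. The Lopatinskii--Shapiro condition requires that the principal boundary symbol $i \tau$ of $\partial_{y^{n}}$ be nonzero modulo $(\tau - \tau_{+})$, which holds because $\tau_{+}(\xi') \neq 0$ for $\xi' \neq 0$. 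Hence $(L_{0}, \partial_{y^{n}})$ is an elliptic boundary value problem in the sense of ADN, and the zeroth order term $cu$ does not affect this principal-symbol verification.

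Third, having verified the complementing condition, I would invoke the ADN half-space $\calL^{p}$ estimate, which gives on each boundary chart
\begin{equation*}
\|u\|_{W^{2, p}(B^{+})} \leq C \bigl( \|Lu\|_{\calL^{p}(B^{+})} + \|Bu\|_{W^{1, p}(B^{+})} + \|u\|_{\calL^{p}(B^{+})} \bigr).
\end{equation*}
Summing over a finite cover by interior and boundary charts with a subordinate partition of unity $\{\chi_{\xi}\}$, and using the commutator identity $L(\chi_{\xi} u) = \chi_{\xi} Lu + [L, \chi_{\xi}] u$ together with interpolation to absorb the first-order commutator terms, yields (\ref{boundary:eqn6}). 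The main obstacle is the careful verification of the complementing condition and the accompanying bookkeeping of the metric factors introduced by flattening $\partial \Omega$ when $g$ is non-Euclidean; once this is in place, the remaining covering, patching, and commutator-absorption steps are standard.
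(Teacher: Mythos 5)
The paper does not prove Proposition \ref{boundary:prop2} at all; it is quoted verbatim, with attribution, from Agmon, Douglis and Nirenberg \cite{Niren4} (Theorems 7.3 and 15.2) and used as a black box in the rest of \S3. So there is no ``paper's own proof'' to compare your sketch against. Your outline is, nonetheless, a reasonable summary of how the ADN half-space program goes for a second order Robin problem: interior Calder\'on--Zygmund bounds, boundary flattening, verification of the Lopatinskii--Shapiro complementing condition for the pair $(L,\partial_\nu + c)$, the half-space estimate, and patching via a partition of unity with commutator absorption. The complementing-condition check you give is correct: the principal boundary symbol of $\partial_{\nu}$ is $i\tau$, and since the unique root $\tau_{+}(\xi')$ of the frozen characteristic polynomial with positive imaginary part is nonzero for $\xi' \neq 0$, $i\tau$ is nonzero modulo $(\tau - \tau_{+})$; the zeroth-order term $cu$ is irrelevant at the principal level.

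Two caveats are worth flagging, neither of which is a fatal gap in your reasoning but both of which matter if one were to actually carry this out. First, your argument as written establishes an \emph{a priori estimate} for functions already known to lie in $W^{2,p}$; upgrading a mere $H^{1}$ weak solution to $W^{2,p}$ requires an additional regularization step (difference quotients or mollification near the boundary) before the estimate can be applied, and you pass over this silently. Second, the hypothesis in the statement that $\partial\Omega$ is merely Lipschitz is too weak for your flattening argument, which presupposes a smooth (or at least $C^{2}$) boundary diffeomorphism onto a half-ball; the original ADN theorems require correspondingly higher boundary regularity. That mismatch, however, is inherited from the paper's statement rather than introduced by you.
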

\begin{remark}\label{boundary:re1}
 It is worth mentioning that the result in Proposition \ref{boundary:prop2} holds on a $ n $ dimensional hemisphere denoted by $ \sum_{i = 1}^{n - 1} x_{i}^{2} + t^{2} \leqslant 1, t \geqslant 0 $ where the boundary condition is only defined on $ t = 0 $ and $ u $ in (\ref{boundary:eqn5}) vanishes outside the hemisphere \cite[Thm.~15.1]{Niren4}. The Schauder estimates holds in the same manner, see \cite[Thm.~7.1, Thm.~7.2]{Niren4}. This is particularly useful since for the global analysis in next section, we will choose a cover of $ (\bar{M}, g) $, and for any boundary chart $ (U, \phi) $ of $(\bar{M}, g) $, the intersection $ \phi(\bar{U} \cap \bar{M}) $ is a one-to-one correspondence to a hemisphere, provided that $ \partial M $ is smooth enough. It resolves the issue for the boundary charts, as we shall see in later sections.
\end{remark}
\medskip

% Peter-Paul Inequality locally.
Another tool required in the future analysis is the $ W^{s, p} $-type ``Peter-Paul" inequality.
\begin{proposition}\label{boundary:prop3}\cite[Thm.~7.28]{GT} Let $ (\Omega, g) $ be a Riemannian domain in $ \R^{n} $ and $ u \in W^{2, p}(\Omega, g) $. Then for any $ \gamma > 0 $, 
\begin{equation}\label{boundary:eqn7}
\lVert \nabla_{g} u \rVert_{\calL^{p}(\Omega, g)} \leqslant \gamma \lVert u \rVert_{W^{2, p}(\Omega, g)} + C_{\gamma}' \gamma^{-1} \lVert u \rVert_{\calL^{p}(\Omega, g)}.
\end{equation}
Here $ C_{\gamma} $ only depends on $ \gamma $ and $ (\Omega, g) $.
\end{proposition}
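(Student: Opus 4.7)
The plan is to deduce the inequality from the classical Euclidean Gagliardo--Nirenberg interpolation together with Young's inequality, using a partition-of-unity argument to pass between the Riemannian and Euclidean settings.

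First I would cover $\bar\Omega$ by finitely many coordinate charts $(U_\xi,\phi_\xi)$ and take a subordinate partition of unity $\{\chi_\xi\}$. Because $g$ extends smoothly to $\bar\Omega$, on each chart the Riemannian gradient norm $|\nabla_g v|_g$ and the volume form $\dvol$ are comparable, uniformly in $\xi$, to their Euclidean counterparts on $\phi_\xi(U_\xi)\subset\R^n$. Writing $u=\sum_\xi\chi_\xi u$, it therefore suffices to establish, for each $v=\chi_\xi u$ regarded as a function on a bounded Euclidean domain, the corresponding Euclidean estimate $\|Dv\|_{L^p}\le\gamma\|v\|_{W^{2,p}}+C_\gamma\gamma^{-1}\|v\|_{L^p}$.

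Second, for the Euclidean version I would first establish the multiplicative interpolation
\begin{equation*}
\|Dv\|_{L^p(\Omega)}\le C\bigl(\|D^2v\|_{L^p(\Omega)}^{1/2}\|v\|_{L^p(\Omega)}^{1/2}+\|v\|_{L^p(\Omega)}\bigr).
\end{equation*}
The cleanest route is a one-dimensional identity: for $w\in C^2([0,1])$, integration by parts and H\"older give $\int_0^1|w'|^p\le C\bigl(\int_0^1|w''|^p\bigr)^{1/2}\bigl(\int_0^1|w|^p\bigr)^{1/2}+C'\int_0^1|w|^p$. Applying this in each Cartesian direction and invoking Fubini lifts the estimate to cubes, and thence to any bounded Lipschitz $\Omega$ via a Calder\'on--Stein extension; density extends the inequality from $C^2$ to $W^{2,p}$. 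With this in hand, Young's inequality $\sqrt{ab}\le\gamma a+(4\gamma)^{-1}b$ converts the multiplicative form into the desired additive $\gamma$-dependent form; bounding $\|D^2v\|_{L^p}\le\|v\|_{W^{2,p}}$ and summing over the partition of unity absorbs the finitely many chart-dependent comparison constants into $C_\gamma$ and completes the proof.

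The main obstacle is the second step: producing the multiplicative interpolation without strong smoothness hypotheses on $\partial\Omega$. Since $\Omega$ is only assumed Lipschitz, one must be careful to use either an extension operator valid in that regularity or the direct cube-plus-Fubini argument; the geometry of $\Omega$ enters only through the finite constant $C_\gamma$, which is harmless for the applications in the sequel.
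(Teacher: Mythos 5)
The paper offers no proof of this proposition---it is quoted directly from Gilbarg--Trudinger, Theorem~7.28, with only a one-sentence remark that for compactly supported $u$ it follows from Gagliardo--Nirenberg interpolation. Your overall plan---partition of unity to pass to Euclidean domains, a one-dimensional interpolation lifted by Fubini and extension, then Young's inequality to turn the multiplicative form into the additive $\gamma$-form---is the standard route and would work if carried through carefully.

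The concrete gap is in the one-dimensional step. You assert that for $w\in C^2([0,1])$ ``integration by parts and H\"older give'' the multiplicative bound, but integration by parts actually yields
\begin{equation*}
\int_0^1\lvert w'\rvert^p\,dx=\Bigl[\,\lvert w'\rvert^{p-2}w'\,w\,\Bigr]_0^1-(p-1)\int_0^1\lvert w'\rvert^{p-2}w''\,w\,dx,
\end{equation*}
and the endpoint term involves pointwise values of $w'$ at $0$ and $1$ that are \emph{not} controlled by $\lVert w\rVert_{\calL^p(0,1)}$ or $\lVert w''\rVert_{\calL^p(0,1)}$; trying to bound them via Sobolev embedding reintroduces $\lVert w\rVert_{W^{2,p}}$ on the wrong side and the argument becomes circular. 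This is exactly why the statement is immediate for compactly supported functions (as the paper's remark notes) and nontrivial in general. To repair the step you need either (i) a mean-value/averaging device to locate one interior point $\xi$ with $\lvert w'(\xi)\rvert$ controlled by $\lVert w\rVert_{\calL^1}$, propagate to all $x$ via $w'(x)=w'(\xi)+\int_\xi^x w''$, and then rescale the interval length to produce the multiplicative factor; or (ii) extend $u$ to a compactly supported function on all of $\R^n$ \emph{before} invoking the one-dimensional estimate, so no boundary terms ever arise---that is the order Gilbarg and Trudinger use, whereas your proposal runs interval $\to$ cube $\to$ domain, leaving the interval-level endpoint terms unresolved. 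One minor further point: in this paper a Riemannian domain has smooth boundary, so the Lipschitz-regularity worry in your closing paragraph is not the real obstruction---the boundary term above is.
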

Note that in \cite{GT} this inequality is stated in a more general version, we only need the $ W^{2, p} $-case here. Note also that the result above can be easily obtained by Gagliardo-Nirenberg interpolation inequality when $ u $ is compactly supported in $ \Omega $. We can easily extend this local results to global results.
% Peter-Paul Inequality globally.
\begin{proposition}\label{boundary:prop4} Let $ (\bar{M}, g) $ be a compact manifold with smooth boundary $ \partial M $. Let $ u \in W^{2, p}(M, g) $. Then for any $ 0 < \gamma < 1 $,
\begin{equation}\label{boundary:eqn8}
\lVert u \rVert_{W^{1, p}(M, g)} \leqslant \gamma \lVert u \rVert_{W^{2, p}(M, g)} + C_{\gamma} \gamma^{-1} \lVert u \rVert_{\calL^{p}(M, g)}.
\end{equation}
Here $ C_{\gamma} $ only depends on $ \gamma $ and $ (\bar{M}, g) $.
\end{proposition}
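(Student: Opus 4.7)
The plan is to globalize the local Peter--Paul inequality (Proposition \ref{boundary:prop3}) via a finite cover and subordinate partition of unity, using the chart decomposition from Definition \ref{boundary:def1}(iv). First, choose an open cover $\{(U_i, \phi_i)\}_{i=1}^N$ of $\bar M$ in which each interior chart maps onto a bounded Riemannian domain in $\R^n$, and each boundary chart maps $\bar U_i \cap \bar M$ into a hemisphere of the kind discussed in Remark \ref{boundary:re1}; let $\{\chi_i\}$ be a subordinate smooth partition of unity.

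Next, apply Proposition \ref{boundary:prop3} (or, for boundary charts, its hemispherical analog from Remark \ref{boundary:re1}) to the pushforward of $\chi_i u$, which is compactly supported in $\phi_i(U_i)$, to obtain
\begin{equation*}
\lVert \nabla_g(\chi_i u) \rVert_{\calL^p(\phi_i(U_i), g)} \leqslant \gamma \lVert \chi_i u \rVert_{W^{2, p}(\phi_i(U_i), g)} + C'_{\gamma, i}\,\gamma^{-1} \lVert \chi_i u \rVert_{\calL^p(\phi_i(U_i), g)}.
\end{equation*}
The next step is to use $\chi_i \nabla_g u = \nabla_g(\chi_i u) - u\,\nabla_g \chi_i$ together with uniform bounds on $\lvert \nabla_g \chi_i \rvert$, and the product-rule estimate $\lVert \chi_i u \rVert_{W^{2, p}} \leqslant C_1 \lVert u \rVert_{W^{2, p}}$. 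Summing over $i$ (using $\sum_i \chi_i \equiv 1$) and invoking the equivalence of norms in Definition \ref{boundary:def1}(iv) should yield
\begin{equation*}
\lVert \nabla_g u \rVert_{\calL^p(M, g)} \leqslant N C_1 \gamma\, \lVert u \rVert_{W^{2, p}(M, g)} + \bigl( N C_1 \max_i C'_{\gamma, i} + C_2 \bigr)\, \gamma^{-1} \lVert u \rVert_{\calL^p(M, g)},
\end{equation*}
where $C_2$ absorbs the error coming from the $u\,\nabla_g \chi_i$ terms (using $\gamma < 1$, so that $1 \leqslant \gamma^{-1}$). Finally, the rescaling $\gamma \mapsto \gamma/(N C_1)$ combined with the trivial bound on $\lVert u \rVert_{\calL^p}$ for the $\calL^p$-component of the $W^{1, p}$-norm delivers the stated inequality with $C_\gamma$ depending only on $\gamma$ and $(\bar M, g)$.

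The only genuinely nontrivial point is whether Proposition \ref{boundary:prop3} applies on boundary charts, which correspond to hemispheres rather than open subsets of $\R^n$. I expect Remark \ref{boundary:re1} to settle this: the analogous local Peter--Paul estimate holds on a hemisphere for functions vanishing outside it, and $\chi_i u$ has compact support in $\bar U_i \cap \bar M$ by construction. With that verification in hand, the rest is a routine partition-of-unity argument with careful bookkeeping of constants.
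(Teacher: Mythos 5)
Your proposal is correct and follows essentially the same route as the paper's proof: finite cover, subordinate partition of unity, termwise local Peter--Paul, and resummation. The only substantive difference is that the paper avoids the product-rule decomposition $\chi_i \nabla_g u = \nabla_g(\chi_i u) - u\,\nabla_g\chi_i$, the resulting error term, and the final $\gamma$-rescaling by working directly with the chart-wise norm of Definition \ref{boundary:def1}(iv): since $\lVert u\rVert_{W^{1,p}(M,g)} = \sum_\xi \lVert(\phi_\xi^{-1})^*\chi_\xi u\rVert_{W^{1,p}(\phi_\xi(U_\xi),g)}$ by definition, one applies Proposition \ref{boundary:prop3} summand by summand and resums the local $W^{2,p}$- and $\calL^p$-pieces into the global norms with no extra bookkeeping. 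On the caveat you flag: Remark \ref{boundary:re1} in fact addresses the Agmon--Douglis--Nirenberg estimate (Proposition \ref{boundary:prop2}), not the interpolation inequality, so it does not settle the half-ball case of Proposition \ref{boundary:prop3} directly; that case is instead handled by even reflection across the flat face, or by noting that Gilbarg--Trudinger Theorem 7.28 holds on any bounded domain with a Lipschitz (or cone-condition) boundary. Your caution is legitimate --- the paper's own proof applies Proposition \ref{boundary:prop3} on boundary charts without comment --- so if anything you are being more careful than the original.
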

\begin{proof}
Taking a finite cover $ (U_{\xi}, \phi_{\xi}) $ of $ (\bar{M}, g) $ and a smooth partition of unity $ \lbrace \chi_{\xi} \rbrace $. Applying this, we have
\begin{align*}
\lVert u \rVert_{W^{1, p}(M, g)} & = \sum_{\xi} \left\lVert \left(\phi_{\xi}^{-1}\right)^{*} \chi_{\xi} u \right\rVert_{W^{1, p}(\phi_{\xi}(U_{\xi}), g)} = \lVert u \rVert_{\calL^{p}(M, g)} + \sum_{\xi} \left\lVert \nabla_{g} \left(\left(\phi_{\xi}^{-1}\right)^{*} \chi_{\xi} u \right) \right\rVert_{\calL^{p}(\phi_{\xi}(U_{\xi}), g)} \\
& \leqslant \lVert u \rVert_{\calL^{p}(M, g)} + \sum_{\xi} \left( \gamma \left\lVert \left(\phi_{\xi}^{-1}\right)^{*} \chi_{\xi} u \right\rVert_{W^{2, p}(\phi_{\xi}(U_{\xi}), g)} + C_{\gamma}' \gamma^{-1} \left\lVert \left(\phi_{\xi}^{-1}\right)^{*} \chi_{\xi}u \right\rVert_{\calL^{p}(\phi_{\xi}(U_{\xi}), g)} \right) \\
& = \gamma \lVert u \rVert_{W^{2, p}(M, g)} + C_{\gamma} \gamma^{-1} \lVert u \rVert_{\calL^{p}(M, g)}.
\end{align*}
In the last step, we combine the first and the third term.
\end{proof}
\medskip

% Sobolev embedding.
Sobolev embedding theorem for compact manifolds with boundary plays an important role in regularity arguments.
\begin{proposition}\label{boundary:prop5}\cite[Ch.~2]{Aubin}  (Sobolev Embeddings) 
Let $ (\bar{M}, g) $ be a compact manifold with smooth boundary $ \partial M $.

(i) For $ s \in \mathbb{N} $ and $ 1 \leqslant p \leqslant p' < \infty $ such that
\begin{equation}\label{boundary:eqn9}
   \frac{1}{p} - \frac{s}{n} \leqslant \frac{1}{p'},
\end{equation}
\noindent  $ W^{s, p}(M, g) $ continuously embeds into $ \mathcal{L}^{p'}(M, g) $ with the following estimates: 
\begin{equation}\label{boundary:eqn9a}
\lVert u \rVert_{\calL^{p'}(M, g)} \leqslant K \lVert u \rVert_{W^{s, p}(M, g)}.
\end{equation}

(ii) For $ s \in \mathbb{N} $, $ 1 \leqslant p < \infty $ and $ 0 < \alpha < 1 $ such that
\begin{equation}\label{boundary:eqn10}
  \frac{1}{p} - \frac{s}{n} \leqslant -\frac{\alpha}{n},
\end{equation}
Then  $ W^{s, p}(M, g) $ continuously embeds in the H\"older space $ \calC^{0, \alpha}(\bar{M}) $ with the following estimates:
\begin{equation}\label{boundary:eqn10a}
\lVert u \rVert_{\calC^{0, \alpha}(\bar{M})} \leqslant K' \lVert u \rVert_{W^{s, p}(M, g)}.
\end{equation}

(iii) Both embeddings above are compact embeddings provided that the equalities in (\ref{boundary:eqn9}) and (\ref{boundary:eqn10}) do not hold, respectively.
\end{proposition}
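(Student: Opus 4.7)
The plan is to reduce everything to the classical Euclidean Sobolev embeddings via a finite atlas and partition of unity, exactly as in part (iv) of Definition \ref{boundary:def1}. Choose a finite cover $\{(U_\xi, \phi_\xi)\}$ of $\bar M$ such that each chart is either an \emph{interior chart}, with $\phi_\xi(U_\xi)$ an open ball in $\mathbb R^n$, or a \emph{boundary chart}, with $\phi_\xi(U_\xi \cap \bar M)$ a closed half-ball in $\overline{\mathbb R^n_+}$ (using smoothness of $\partial M$ to straighten the boundary). Pick a smooth partition of unity $\{\chi_\xi\}$ subordinate to the cover. Since the coordinate representation of $g$ is bounded above and below on each precompact $\overline{U_\xi}$, the $W^{s,p}$-norm in the Riemannian sense is equivalent to the Euclidean $W^{s,p}$-norm after pulling back; this turns the global estimates (\ref{boundary:eqn9a}) and (\ref{boundary:eqn10a}) into the problem of estimating each $(\phi_\xi^{-1})^*(\chi_\xi u)$ on $\phi_\xi(U_\xi)$ in the target norm, and then summing.

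For interior charts, $\chi_\xi u$ pulls back to a function compactly supported in an open ball, and the classical Sobolev embedding $W^{s,p}(\mathbb R^n) \hookrightarrow \mathcal L^{p'}(\mathbb R^n)$ (respectively $\mathcal C^{0,\alpha}$) applies directly under the hypotheses (\ref{boundary:eqn9}) and (\ref{boundary:eqn10}). For boundary charts the pull-back is compactly supported only \emph{inside} the half-ball, with no decay at the flat face $\{t=0\}$, so the argument requires an intermediate step: invoke a bounded linear extension operator $E: W^{s,p}(\mathbb R^n_+) \to W^{s,p}(\mathbb R^n)$. For smooth boundary this can be produced by a finite-order reflection (choosing coefficients so that all derivatives up to order $s$ match across $\{t=0\}$), or, more generally, by Stein's extension theorem. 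Once the function is extended to $\mathbb R^n$, the Euclidean embedding again applies, and the resulting estimate descends back to $\phi_\xi(U_\xi \cap \bar M)$ since extension only enlarges the norm.

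Summing the chart-wise estimates against the partition of unity $\{\chi_\xi\}$, and using the Leibniz rule together with $\|\chi_\xi\|_{\mathcal C^s(\bar M)} < \infty$, yields (\ref{boundary:eqn9a}) and (\ref{boundary:eqn10a}) with a constant $K$ (respectively $K'$) depending only on the atlas, the partition of unity, and $g$. For compactness in (iii), I would run the same localization: given a bounded sequence $\{u_k\} \subset W^{s,p}(M,g)$, each localized extended sequence is bounded in $W^{s,p}(\mathbb R^n)$ with uniformly controlled support, so the classical Rellich--Kondrachov theorem on a fixed Euclidean ball extracts a subsequence converging in $\mathcal L^{p'}$ (or $\mathcal C^{0,\alpha}$) whenever the embedding index is subcritical. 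A finite diagonal extraction across charts produces a globally convergent subsequence.

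The main obstacle is the boundary-chart step, where one must justify the extension operator and verify that it is bounded into $W^{s,p}$ with uniform constants across the atlas. Once the smoothness of $\partial M$ allows boundary straightening, everything else is routine bookkeeping; for interior charts the argument is entirely standard. Because the proposition is classical and appears in \cite[Ch.~2]{Aubin}, this sketch is essentially a recap, but it makes explicit the two ingredients — norm equivalence under charts from Definition \ref{boundary:def1}(iv) and a boundary extension operator — that will reappear in the global $\mathcal L^p$-regularity theory developed in later sections.
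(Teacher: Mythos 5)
The paper does not prove this proposition; it is cited directly from \cite[Ch.~2]{Aubin} with no argument supplied, so there is no proof in the paper to compare against. Your localization-plus-extension sketch is the standard and correct route: interior charts reduce to the compactly supported Euclidean embedding, boundary charts are handled by straightening $\partial M$, applying a bounded extension $E: W^{s,p}(\mathbb R^n_+) \to W^{s,p}(\mathbb R^n)$ (higher-order reflection suffices here since $\partial M$ is smooth), and then invoking the Euclidean Sobolev and Morrey theorems; summing with the Leibniz rule gives (\ref{boundary:eqn9a})--(\ref{boundary:eqn10a}), and Rellich--Kondrachov on a fixed ball gives (iii). The only cosmetic remark is that the atlas is finite, so the ``diagonal extraction across charts'' in your compactness step is really a finite iteration of subsequence passes rather than a genuine diagonal argument.
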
 
\medskip

% Trace theorem.
In order to deal with manifolds with boundaries, a trace theorem is often required. Let $ \imath : \partial M \rightarrow \bar{M} $ be the inclusion map and thus $ \partial M $ admits an induced Riemannian metric $ \imath^{*} g $. The following version is due to Taylor \cite{T}.
\begin{proposition}\label{boundary:prop6}\cite[Prop.~4.5]{T}
Let $ (\bar{M}, g) $ be a compact manifold with smooth boundary $ \partial M $. Let $ u \in H^{1}(M, g) $. Then there exists a bounded linear operator
\begin{equation*}
T : H^{1}(M, g) \rightarrow \calL^{2}(\partial M, \imath^{*}g)
\end{equation*}
such that
\begin{equation}\label{boundary:eqn11}
\begin{split}
T u & = u \bigg|_{\partial M}, \; \text{if} \; u \in \calC^{\infty}(\bar{M}) \cap H^{1}(M, g); \\
\lVert T u \rVert_{\calL^{2}(\partial M, \imath^{*} g)} & \leqslant K'' \lVert u \rVert_{H^{1}(M, g)}.
\end{split}
\end{equation}
Here $ K'' $ only depends on $ (M, g) $ and is independent of $ u $. Furthermore, the map $ T : H^{1}(M, g) \rightarrow H^{\frac{1}{2}}(\partial M, \imath^{*} g) $ is surjective.
\end{proposition}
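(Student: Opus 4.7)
The plan is to reduce the global statement to a local computation on the half-space $\R_{+}^{n} = \{(x', x_{n}) : x_{n} \geqslant 0\}$ via a finite atlas of boundary and interior charts, following the standard template for trace theorems. Let $\{(U_{\xi}, \phi_{\xi})\}$ be a finite cover of $\bar{M}$ such that each boundary chart satisfies $\phi_{\xi}(U_{\xi} \cap \bar{M}) \subset \overline{\R_{+}^{n}}$ with $\phi_{\xi}(U_{\xi} \cap \partial M) \subset \{x_{n} = 0\}$ (possible since $\partial M$ is smooth, as recalled in Remark \ref{boundary:re1}), and let $\{\chi_{\xi}\}$ be a subordinate partition of unity. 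Since interior charts contribute nothing to the boundary trace, only the boundary pieces $v_{\xi} := (\phi_{\xi}^{-1})^{*}(\chi_{\xi} u)$, compactly supported in $\overline{\phi_{\xi}(U_{\xi})} \subset \overline{\R_{+}^{n}}$, need to be analyzed.

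First I would establish the bounded map $T : H^{1}(M, g) \to \calL^{2}(\partial M, \imath^{*} g)$. By density of $\calC^{\infty}(\bar{M}) \cap H^{1}(M, g)$ in $H^{1}(M, g)$, it suffices to prove the norm estimate for smooth $u$. On the half-space, for $v \in \calC_{c}^{\infty}(\overline{\R_{+}^{n}})$, the fundamental theorem of calculus yields
\begin{equation*}
\lvert v(x', 0) \rvert^{2} = -\int_{0}^{\infty} \frac{\partial}{\partial x_{n}} \lvert v(x', x_{n}) \rvert^{2} \, dx_{n} = -2 \int_{0}^{\infty} v \, \partial_{n} v \, dx_{n}.
\end{equation*}
Integrating over $x' \in \R^{n-1}$, applying Cauchy--Schwarz, and using the elementary inequality $2ab \leqslant a^{2} + b^{2}$ gives $\lVert v(\cdot, 0) \rVert_{\calL^{2}(\R^{n-1})}^{2} \leqslant \lVert v \rVert_{\calL^{2}(\R_{+}^{n})}^{2} + \lVert \partial_{n} v \rVert_{\calL^{2}(\R_{+}^{n})}^{2}$. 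Applying this to each $v_{\xi}$, transferring from the flat half-space back to $(\bar{M}, g)$ (which introduces only bounded Jacobian factors depending on $g$), and summing over $\xi$ yields the desired estimate $\lVert Tu \rVert_{\calL^{2}(\partial M, \imath^{*} g)} \leqslant K'' \lVert u \rVert_{H^{1}(M, g)}$ with $K''$ depending only on the atlas, partition of unity, and the metric $g$. The identification $Tu = u|_{\partial M}$ for $u \in \calC^{\infty}(\bar{M}) \cap H^{1}(M, g)$ is immediate from the construction.

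For the surjectivity onto $H^{\frac{1}{2}}(\partial M, \imath^{*} g)$, the plan is to construct an explicit right inverse (extension operator) $E : H^{\frac{1}{2}}(\partial M) \to H^{1}(M)$ with $T \circ E = \mathrm{id}$. Working locally again and pulling back via the boundary charts, the problem reduces to extending a given $\phi \in H^{\frac{1}{2}}(\R^{n-1})$ to a function on $\R_{+}^{n}$ lying in $H^{1}$. A natural choice is the harmonic extension defined through the Fourier transform in the tangential variables by $(E\phi)(x', x_{n}) := \mathcal{F}_{\xi' \to x'}^{-1} \bigl( e^{-x_{n} \langle \xi' \rangle} \hat{\phi}(\xi') \bigr)$, where $\langle \xi' \rangle = (1 + \lvert \xi' \rvert^{2})^{1/2}$. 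A direct Plancherel computation gives $\lVert E\phi \rVert_{H^{1}(\R_{+}^{n})}^{2} \simeq \int_{\R^{n-1}} \langle \xi' \rangle \lvert \hat{\phi}(\xi') \rvert^{2} \, d\xi' = \lVert \phi \rVert_{H^{1/2}(\R^{n-1})}^{2}$, while clearly $(E\phi)(x', 0) = \phi(x')$. Patching these local extensions with the partition of unity and using a collar neighborhood of $\partial M$ produces the global right inverse.

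The main obstacle is the surjectivity onto the fractional space $H^{\frac{1}{2}}$, since it requires a correct intrinsic definition of $H^{\frac{1}{2}}(\partial M, \imath^{*} g)$ (either via local Fourier/Bessel-potential norms pulled back through charts, or equivalently via interpolation $[L^{2}, H^{1}]_{1/2}$ on the closed manifold $\partial M$) and a verification that the patched extension is chart-independent up to equivalent norms. The boundedness half is essentially a one-dimensional calculus identity and should pose no difficulty.
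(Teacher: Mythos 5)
The paper does not prove this proposition; it is quoted verbatim from Taylor \cite[Prop.~4.5]{T} and used as a black box. Your proposal supplies the standard textbook proof, and it is correct: localization via a finite boundary atlas and partition of unity, the one-dimensional fundamental-theorem-of-calculus estimate $\lvert v(x',0)\rvert^2 = -2\int_0^\infty v\,\partial_n v\,dx_n$ on the half-space for the $L^2$-boundedness of the trace, and the Bessel-potential extension $E\phi = \mathcal{F}^{-1}_{\xi'\to x'}\bigl(e^{-x_n\langle\xi'\rangle}\hat\phi\bigr)$ for surjectivity onto $H^{1/2}$; the Plancherel computation you sketch correctly gives $\lVert E\phi\rVert_{H^1(\R_+^n)}^2 \simeq \int\langle\xi'\rangle\lvert\hat\phi\rvert^2\,d\xi'$ since the three contributions $\tfrac{1}{2\langle\xi'\rangle}$, $\tfrac{\langle\xi'\rangle}{2}$, and $\tfrac{\lvert\xi'\rvert^2}{2\langle\xi'\rangle}$ sum to a quantity comparable to $\langle\xi'\rangle$. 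This is essentially the argument in Taylor's book, and the two caveats you flag (a chart-independent definition of $H^{1/2}(\partial M,\imath^*g)$ and the cutoff/collar step to glue the local extensions) are the genuine but routine points of care; neither is a gap.
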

\medskip

% Local results of Yamabe equation with zero boundary condition.
The following result, which is a local version of perturbed Yamabe equation with trivial Dirichlet boundary condition and a negative constant $ \beta < 0 $, plays a central role in boundary Yamabe problem. We proved this result in \cite{XU3}, and applied this result to proof Yamabe problem on closed manifolds.
\begin{proposition}\label{boundary:prop7}\cite[Prop.~3.3]{XU3}
Let $ (\Omega, g) $ be Riemannian domain in $\R^n$, $ n \geqslant 3 $, with $C^{\infty} $ boundary, and with ${\rm Vol}_g(\Omega)$ and the Euclidean diameter of $\Omega$ sufficiently small. Let $ \beta < 0 $ be any negative constant. Assume $ S_{g} < 0 $ within the small enough closed domain $ \bar{\Omega} $. Then for any $ \lambda > 0 $ the following Dirichlet problem
\begin{equation}\label{boundary:eqn12}
-a\Delta_{g} u + \left( S_{g} + \beta \right) u = \lambda u^{p-1} \; {\rm in} \; \Omega, u = 0 \; {\rm on} \; \partial \Omega.
\end{equation}
has a real, positive solution $ u \in \calC_{0}(\bar{\Omega}) \cap H_{0}^{1}(\Omega, g) $ vanishes at $ \partial \Omega $. 
\end{proposition}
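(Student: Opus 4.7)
The plan is to solve the Dirichlet problem \eqref{boundary:eqn5} variationally, via direct minimization of the Yamabe-type functional
\begin{equation*}
Y_{\beta}(\Omega) \;:=\; \inf_{0 \neq u \in H_{0}^{1}(\Omega, g)}\,
\frac{\displaystyle\int_{\Omega} \bigl(a|\nabla_{g} u|^{2} + (S_{g}+\beta)\,u^{2}\bigr)\,\dvol}
     {\left(\displaystyle\int_{\Omega} |u|^{p}\,\dvol\right)^{2/p}}.
\end{equation*}
Since $p=2n/(n-2)$ is the critical Sobolev exponent, this is a Brezis--Nirenberg type problem. First I would check that $Y_{\beta}(\Omega)$ is finite and bounded below: the Poincar\'e inequality on a domain of small diameter gives that the quadratic form is coercive on $H_{0}^{1}(\Omega,g)$ even though $S_{g}+\beta<0$ (choose $\Omega$ small enough that the first Dirichlet eigenvalue of $-a\Delta_{g}$ dominates $-(S_{g}+\beta)$ uniformly). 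In particular one obtains a bounded minimizing sequence.

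The main obstacle, as in every critical Sobolev problem, is compactness: bounded minimizing sequences in $H_{0}^{1}(\Omega,g)$ generally fail to converge strongly in $\mathcal{L}^{p}(\Omega,g)$ due to possible concentration (bubbling) at interior points. The standard cure is to establish the strict inequality
\begin{equation*}
Y_{\beta}(\Omega) \;<\; K(n,2)^{-2},
\end{equation*}
where $K(n,2)$ is the sharp Sobolev constant on $\R^{n}$; by the Lions concentration--compactness principle this strict gap precludes concentration and forces strong $\mathcal{L}^{p}$-convergence along a subsequence. To verify the strict inequality I would plug the usual Aubin test functions $u_{\epsilon}(x) = \epsilon^{(n-2)/2}\bigl(\epsilon^{2}+|x|^{2}\bigr)^{-(n-2)/2}$, suitably cut off and centered at an interior point of $\Omega$, into the numerator and denominator. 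The Euclidean part reproduces $K(n,2)^{-2}$; the correction coming from $S_{g}+\beta<0$ contributes a strictly negative term of lower order in $\epsilon$. Here the hypothesis $\beta<0$ together with $S_{g}<0$ is exactly what is needed to obtain the gap in every dimension $n\geqslant 3$, sidestepping the usual dimensional restrictions of the Brezis--Nirenberg theorem.

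With the gap established, a minimizing sequence $u_{k}$ converges (up to a subsequence) strongly in $\mathcal{L}^{p}(\Omega,g)$ and weakly in $H_{0}^{1}(\Omega,g)$ to a minimizer $u\in H_{0}^{1}(\Omega,g)$, which then converges strongly in $H_{0}^{1}$ as well by a standard lower semicontinuity plus Brezis--Lieb argument. Replacing $u$ by $|u|$ (which has the same $Y_{\beta}$-value since the functional depends only on $|u|$ and $|\nabla u|$) gives a nonnegative minimizer. The Euler--Lagrange equation yields
\begin{equation*}
-a\Delta_{g}u + (S_{g}+\beta)u \;=\; Y_{\beta}(\Omega)\,\|u\|_{p}^{2-p}\,u^{p-1}\qquad\text{in }\Omega,
\end{equation*}
and by rescaling $u \mapsto cu$ for an appropriate $c>0$ the prefactor can be set to any prescribed $\lambda>0$.

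Finally I would bootstrap regularity: by Proposition \ref{boundary:prop2} applied to the linear equation $-a\Delta_{g}u = \lambda u^{p-1}-(S_{g}+\beta)u\in \mathcal{L}^{q}$ (with exponents iterated via Proposition \ref{boundary:prop5}) and the trivial Dirichlet datum, we gain integrability and derivatives until Schauder estimates apply, giving $u\in \calC^{\infty}(\Omega)\cap \calC^{0}(\bar{\Omega})$. The strong maximum principle (valid since the minimizer is nonnegative, not identically zero, and satisfies a linear elliptic inequality of the form $-a\Delta_{g}u + Vu \geqslant 0$ with bounded $V$) forces $u>0$ throughout $\Omega$, while the Dirichlet datum gives $u=0$ on $\partial\Omega$, completing the argument.
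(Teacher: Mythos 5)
You take a direct-minimization route: minimize the Sobolev quotient $Y_{\beta}(\Omega)$ over $H_0^1(\Omega,g)$, restore compactness at the critical exponent via a strict gap $Y_{\beta}(\Omega) < K(n,2)^{-2}$, and obtain the gap from Aubin test functions with the negative potential $S_g+\beta$ supplying the correction term. The paper, by contrast, does not reprove this statement here but cites \cite[Prop.~3.3]{XU3}; the later discussion in Theorem \ref{yamabe:thms} (the unconstrained functional $J$, the a priori bound $J(\tilde u_{\beta})\leqslant K_0$, and the appeal to Theorem~1.1 of \cite{WANG}) makes it clear that the argument there is a mountain-pass/min-max scheme for the Euler--Lagrange functional with an energy bound below the one-bubble threshold, rather than a constrained minimization of the quotient. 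These routes are cousins, but they are not the same, and the bookkeeping needed to exhibit the critical level below the threshold is different in each.

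The substantive gap is your assertion that $\beta<0$ together with $S_g<0$ yields the strict inequality ``in every dimension $n\geqslant 3$, sidestepping the usual dimensional restrictions of the Brezis--Nirenberg theorem.'' This is backwards: the Brezis--Nirenberg dimensional restriction arises \emph{precisely} in the setting of a bounded negative potential, which is exactly what $S_g+\beta<0$ gives. For the cut-off Aubin bubble of scale $\epsilon$ on a domain of diameter $\rho$, the gradient cutoff error is of order $(\epsilon/\rho)^{n-2}$, while $\int u_{\epsilon}^2$ is of order $\epsilon^2$ for $n\geqslant 5$, of order $\epsilon^2\log(\rho/\epsilon)$ for $n=4$, and of order $\epsilon\rho$ for $n=3$. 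In $n\geqslant 4$ the potential term dominates as $\epsilon\to 0$ and the gap follows. In $n=3$ both terms are of order $\epsilon$, with net coefficient of the form $C_1/\rho - C_2\,|S_g+\beta|\,\rho$; this is \emph{positive}, so the test-function expansion gives no gap, once $\rho^2 < C_1/(C_2\,|S_g+\beta|)$. Since the hypothesis forces $\Omega$ to have small diameter while $\beta$ is a fixed constant, this is exactly the regime your argument needs and exactly the regime where it fails; it is consistent with the Brezis--Nirenberg nonexistence range $0<\lambda<\lambda_1(\Omega)/4$ for Euclidean balls in $\R^3$. You would need a genuinely different mechanism in $n=3$ — a positive-mass-type correction, a curvature contribution that survives the small-diameter limit, or the alternative variational machinery of \cite{WANG} as used in \cite{XU3} — to close this step. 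As written, the proposal is incomplete in dimension three.
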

\begin{remark}\label{boundary:re2}
Let $ \lambda_{1} $ be the first nonzero eigenvalue of $ -\Delta_{g} $ on Riemannian domain $ (\Omega, g) $ with Dirichlet boundary condition. Recall that in Proposition 3.3 of \cite{XU3}, the smallness of $ \Omega $ is determined by
\begin{equation}\label{boundary:eqn13}
\sup_{x \in M} \lvert S_{g} \rvert + \lvert \beta \rvert \leqslant a \lambda_{1}, \frac{a}{n} - \left( \frac{n - 2}{2n} + \frac{1}{2} \right) \left( \sup_{x \in M} \lvert S_{g} \rvert + \lvert \beta \rvert \right) \lambda_{1}^{-1} \geqslant 0.
\end{equation}
(\ref{boundary:eqn13}) will be used in Section 6.
\end{remark}
\medskip

% This is the beginning of local and global analysis, for sub- and super- solutions of Yamabe equation.
\section{Monotone Iteration Scheme on Closed Manifolds with Boundary}
In this section, an $ \calL^{p} $-regularity result on compact manifolds with smooth boundary will be proved first, this global $ \calL^{p} $ regularity will then be used to show the existence of solution of second order elliptic PDE on $ (\bar{M}, g) $ with appropriate boundary conditions by monotone iteration scheme. Throughout the whole section, the existence of solutions, sub-solutions or super-solutions are assumed. Throughout this section, we assume that $ \dim \bar{M} \geqslant 3 $.
\medskip

The first result is a global $ \calL^{p} $-regularity with respect to the elliptic operator and oblique boundary conditions. This proof, essentially, is due to Agmon, Douglis and Nirenberg \cite{Niren4}, although they only proved a local version.
% The global $ L^{p} $ regularity.
\begin{theorem}\label{global:thm1} Let $ (\bar{M}, g) $ be a compact manifold with smooth boundary $ \partial M $. Let $ \nu $ be the unit outward normal vector along $ \partial M $. Let $ L $ be a uniform second order elliptic operator on $ M $ with smooth coefficients up to $ \partial M $. Let $ f \in \calL^{p}(M, g) $. Let $ u \in H^{1}(M, g) $ be a weak solution of the following boundary value problem
\begin{equation}\label{global:eqn1}
L u = f \; {\rm in} \; M, \frac{\partial u}{\partial \nu} + c(x) u = 0 \; {\rm on} \; \partial M.
\end{equation}
Here $ c \in \calC^{\infty}(M) $. If, in addition, $ u \in \calL^{p}(M, g) $, then $ u \in W^{2, p}(M, g) $ with the following estimates
\begin{equation}\label{global:eqn2}
\lVert u \rVert_{W^{2, p}(M, g)} \leqslant C \left( \lVert Lu \rVert_{\calL^{p}(M, g)} + \lVert u \rVert_{\calL^{p}(M, g)} \right).
\end{equation}
Here $ C $ depends on $ L, p, c $ and the manifold $ (\bar{M}, g) $ and is independent of $ u $.
\end{theorem}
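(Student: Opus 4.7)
The plan is to reduce to the local Agmon--Douglis--Nirenberg estimate (Proposition~\ref{boundary:prop2}) via a partition of unity argument, to exploit the hemisphere formulation of Remark~\ref{boundary:re1} for the boundary charts, and to use the Peter--Paul interpolation (Proposition~\ref{boundary:prop4}) to absorb a lower-order $W^{1,p}$ norm that unavoidably appears on the right-hand side.

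First I would fix a finite atlas $\{(U_\xi,\phi_\xi)\}$ of $\bar M$ consisting of interior charts together with boundary charts whose images are hemispheres in $\R^n$, plus a subordinate smooth partition of unity $\{\chi_\xi\}$. For each $\xi$ set $v_\xi=\chi_\xi u$. A direct computation gives
\begin{equation*}
L v_\xi = \chi_\xi f + [L,\chi_\xi]u \quad \text{in } U_\xi,
\end{equation*}
where $[L,\chi_\xi]$ is a first-order differential operator with smooth coefficients; on a boundary chart, the Robin condition $Bu=0$ localizes cleanly to $Bv_\xi=\chi_\xi Bu+(\partial_\nu \chi_\xi)u=(\partial_\nu \chi_\xi)u$.

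To apply Proposition~\ref{boundary:prop2} with exponent $p$ to each $v_\xi$, one first needs $u\in W^{1,p}(M,g)$ so that the commutator term belongs to $\calL^p$ and the boundary datum belongs to $W^{1,p}$. When $p\leqslant 2$ this is automatic from $H^1(M,g)\hookrightarrow W^{1,p}(M,g)$ on the compact manifold. When $p>2$ I would run a short bootstrap: starting from $u\in H^1$, iteratively apply the local ADN estimate with intermediate exponents $p_k$, using the Sobolev embeddings of Proposition~\ref{boundary:prop5} to raise the integrability of $\nabla u$ at each stage and the standing hypothesis $u\in\calL^p(M,g)$ to control the resulting $\calL^{p_{k+1}}$-norms of $u$ itself. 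After finitely many steps one reaches $u\in W^{1,p}(M,g)$.

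With $u\in W^{1,p}(M,g)$ in hand, Proposition~\ref{boundary:prop2} applied to each $v_\xi$ (via the hemisphere version of Remark~\ref{boundary:re1} for boundary charts) gives
\begin{equation*}
\lVert v_\xi\rVert_{W^{2,p}(\phi_\xi(U_\xi),g)} \leqslant C_\xi\bigl(\lVert L v_\xi\rVert_{\calL^p}+\lVert B v_\xi\rVert_{W^{1,p}}+\lVert v_\xi\rVert_{\calL^p}\bigr).
\end{equation*}
Summing over the finite cover and bounding the local right-hand sides by global norms yields
\begin{equation*}
\lVert u\rVert_{W^{2,p}(M,g)}\leqslant C\bigl(\lVert Lu\rVert_{\calL^p(M,g)}+\lVert u\rVert_{W^{1,p}(M,g)}+\lVert u\rVert_{\calL^p(M,g)}\bigr).
\end{equation*}
Finally I would invoke Proposition~\ref{boundary:prop4} with $\gamma$ chosen sufficiently small (depending only on $L$, $c$, $p$ and $(\bar M,g)$) to absorb $\lVert u\rVert_{W^{1,p}(M,g)}$ into $\lVert u\rVert_{W^{2,p}(M,g)}$, producing (\ref{global:eqn2}).

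The main obstacle is the bootstrap in the $p>2$ regime, where the ADN estimate presupposes $W^{1,p}$-regularity in order to make sense of the localized commutator and boundary data, while the hypotheses only supply $H^1\cap\calL^p$. Care must be taken that each step of the Sobolev--elliptic iteration remains compatible with the boundary chart structure, so that the hemisphere version of Remark~\ref{boundary:re1} applies uniformly, and that the absorption constant at the Peter--Paul step is independent of $u$ so it can be folded into the single constant $C$ appearing in (\ref{global:eqn2}).
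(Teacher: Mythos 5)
Your proposal follows the same architecture as the paper's proof: localize via a finite atlas and subordinate partition of unity, rewrite $Lv_\xi=\chi_\xi f+[L,\chi_\xi]u$ with the commutator being first-order, invoke the local Agmon--Douglis--Nirenberg estimate (with the hemisphere version of Remark~\ref{boundary:re1} on boundary charts), sum, and absorb the resulting $\lVert u\rVert_{W^{1,p}}$ term by Proposition~\ref{boundary:prop4}. Where you differ is that you explicitly flag and repair a gap the paper elides: for $p>2$ (the case actually used later, since Theorems~\ref{global:thm2} and \ref{global:thm3} take $p>\dim\bar M$), the commutator $[L,\chi_\xi]u$ is only known to lie in $\calL^2$ and the boundary datum $(\partial_\nu\chi_\xi)u$ only in $H^1$, so the ADN hypotheses $f\in\calL^p$, $f'\in W^{1,p}$ are not met from $u\in H^1\cap\calL^p$ alone; moreover the final Peter--Paul absorption implicitly assumes $\lVert u\rVert_{W^{2,p}}<\infty$, which is part of what is to be proved. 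Your remedy --- a standard elliptic bootstrap through intermediate exponents $p_k$ using Sobolev embedding and the standing $\calL^p$ bound, terminating at $W^{1,p}$ before the final estimate --- is correct and does close, since $\chi_\xi f\in\calL^{p_k}$ for $p_k\leqslant p$ on the bounded charts and the $W^{1,p_k}$ regularity of $u$ feeds forward into both the commutator and the boundary datum at the next stage. The paper's write-up implicitly treats the chain of inequalities as an a priori estimate and does not perform this bootstrap, so your version is a more careful rendering of the same argument rather than a genuinely different route.
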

\begin{proof} Choose a finite cover of $ (\bar{M}, g) $, say
\begin{equation*}
(\bar{M}, g) = \left( \bigcup_{\alpha} (U_{\alpha}, \phi_{\alpha}) \right) \cup \left( \bigcup_{\beta} (U_{\beta}, \phi_{\beta}) \right)
\end{equation*}
where $ \lbrace U_{\alpha}, \phi_{\alpha} \rbrace $ are interior charts and $ \lbrace U_{\beta}, \phi_{\beta} \rbrace $ are boundary charts. Choose a partition of unity $ \lbrace \chi_{\alpha}, \chi_{\beta} \rbrace $ subordinate to this cover, where $ \lbrace \chi_{\alpha} \rbrace $ are associated with interior charts and $ \lbrace \chi_{\beta} \rbrace $ are associated with boundary charts. The local expression of the differential operator for interior charts is of the form
\begin{equation*}
L \mapsto \left( \phi_{\alpha}^{-1} \right)^{*} L \phi_{\alpha}^{*} : \calC^{\infty}(\phi_{\alpha}(U_{\alpha})) \rightarrow \calC^{\infty}(\phi_{\alpha}(U_{\alpha})) 
\end{equation*}
which can be extended to Sobolev spaces with appropriate orders. The same expression applies for boundary charts. Denote
\begin{align*}
L_{\alpha} & =  \left( \phi_{\alpha}^{-1} \right)^{*} L \phi_{\alpha}^{*}, L_{\beta} =  \left( \phi_{\beta}^{-1} \right)^{*} L \phi_{\beta}^{*}; \frac{\partial}{\partial \nu'} =  \left( \phi_{\beta}^{-1} \right)^{*} \left( \frac{\partial}{\partial \nu} \right)\phi_{\beta}^{*} \\
\left(\phi_{\alpha}^{-1} \right)^{*} \chi_{\alpha} & = \chi_{\alpha}', \left(\phi_{\beta}^{-1} \right)^{*} \chi_{\beta} = \chi_{\beta}'; \left(\phi_{\beta}^{-1} \right)^{*} c = c_{\beta}'; \\
\left(\phi_{\alpha}^{-1} \right)^{*} u & = u_{\alpha}', \left(\phi_{\beta}^{-1} \right)^{*} u = u_{\beta}', \left(\phi_{\alpha}^{-1} \right)^{*} f = f_{\alpha}', \left(\phi_{\beta}^{-1} \right)^{*} f = f_{\beta}'
\end{align*}
With these notations, the local expressions of our PDE with respect to $ u_{\alpha}', u_{\beta}' $ associated with (\ref{global:eqn1}) in each chart, respectively, are as follows:
\begin{equation}\label{global:eqn3}
\begin{split}
L_{\alpha} \left( \chi_{\alpha}' u_{\alpha}' \right) - [L_{\alpha}, \chi_{\alpha}']u_{\alpha}'  & = \chi_{\alpha}' f_{\alpha}' \; {\rm in} \; \phi_{\alpha}(U_{\alpha}), \chi_{\alpha}' u_{\alpha}'  = 0 \; {\rm on} \;\partial \phi_{\alpha}(U_{\alpha}); \\
L_{\beta} \left( \chi_{\beta}' u_{\beta}' \right) - [L_{\beta}, \chi_{\beta}']u_{\beta}'  & = \chi_{\beta}' f_{\beta}' \; {\rm in} \; \phi_{\beta}(U_{\beta}), \\
\frac{\partial \chi_{\beta}' u_{\beta}' }{\partial \nu'} + c_{\beta}' \chi_{\beta}' u_{\beta}' & = \frac{\partial \chi_{\beta}'}{\partial \nu'} u_{\beta}' \; {\rm on} \; \partial \phi_{\beta}(\bar{U}_{\beta} \cap \bar{M}), \chi_{\beta}' u_{\beta}' = 0 \; {\rm on} \; \partial \phi_{\beta}(U_{\beta}) \backslash \left( \partial \phi_{\beta}(\bar{U}_{\beta} \cap \bar{M}) \right).
\end{split}
\end{equation}
Here $ [L, \chi] $ is a commutator defined as
\begin{equation*}
[L, \chi] u = L(\chi u) - \chi (Lu).
\end{equation*}
Since $ L $ is a second order differential operator, $ [L, \chi] $ is a first order differential operator. Since the existence of solution of (\ref{global:eqn1}) is assumed, we conclude that local PDEs in (\ref{global:eqn3}) are solvable with $ \chi_{\alpha}' u', \chi_{\beta}' u \in \calL^{p} \cap H^{1} $ in associated domains, respectively. The boundary conditions on boundary charts are also Robin condition satisfying Proposition \ref{boundary:prop2}. The following analysis is due to Melrose \cite{RBM1}. For interior chart, we take $ \psi_{\alpha} \in \calC^{\infty}(M) $ such that $ \psi_{\alpha} = 1 $ on $ \text{supp}(\chi_{\alpha}) $, denote $ \psi_{\alpha}' = \left(\phi_{\alpha}^{-1} \right)^{*} \psi_{\alpha} $, thus
\begin{align*}
\lVert \chi_{\alpha}' u_{\alpha}' \rVert_{W^{2, p}(\phi_{\alpha}(U_{\alpha}), g)} & \leqslant C^{*} \left( \lVert L_{\alpha} (\chi_{\alpha}' u_{\alpha}' ) \rVert_{\calL^{p}(\phi_{\alpha}(U_{\alpha}), g)} + \lVert \chi_{\alpha}' u_{\alpha}' \rVert_{\calL^{p}(\phi_{\alpha}(U_{\alpha}), g)} \right) \\
& \leqslant C^{*} \left( \lVert \chi_{\alpha}' f_{\alpha}' \rVert_{\calL^{p}(\phi_{\alpha}(U_{\alpha}), g)} + \left\lVert [L_{\alpha}', \chi_{\alpha}'] \psi_{\alpha}' u_{\alpha}' \right\rVert_{\calL^{p}(\phi_{\alpha}(U_{\alpha}), g)} + \lVert \chi_{\alpha}' u_{\alpha}' \rVert_{\calL^{p}(\phi_{\alpha}(U_{\alpha}), g)} \right) \\
& \leqslant C^{*} \lVert \chi_{\alpha}' f_{\alpha}' \rVert_{\calL^{p}(\phi_{\alpha}(U_{\alpha}), g)} + C^{*} C_{1, \alpha} \lVert \psi_{\alpha}' u_{\alpha}' \rVert_{\calL^{p}(\phi_{\alpha}(U_{\alpha}), g)} \\
& \qquad + C^{*} C_{2, \alpha} \lVert \nabla_{g} (\psi_{\alpha}' u_{\alpha}') \rVert_{\calL^{p}(\phi_{\alpha}(U_{\alpha}), g)} + C^{*} \lVert \chi_{\alpha}' u_{\alpha}' \rVert_{\calL^{p}(\phi_{\alpha}(U_{\alpha}), g)} \\
& \leqslant C^{*} \lVert \chi_{\alpha}' f_{\alpha}' \rVert_{\calL^{p}(\phi_{\alpha}(U_{\alpha}), g)} + C_{0, \alpha} \lVert \psi_{\alpha}' u_{\alpha}' \rVert_{W^{1, p}(\phi_{\alpha}(U_{\alpha}), g)} + C^{*} \lVert \chi_{\alpha}' u_{\alpha}' \rVert_{\calL^{p}(\phi_{\alpha}(U_{\alpha}), g)} \\
\end{align*}
For boundary chart, we take $ \psi_{\beta} $ correspondingly, and have
\begin{align*}
\lVert \chi_{\beta}' u_{\beta}' \rVert_{W^{2, p}(\phi_{\beta}(U_{\beta}), g)} & \leqslant C^{*} \left( \lVert L_{\beta} (\chi_{\beta}' u_{\beta}' ) \rVert_{\calL^{p}(\phi_{\alpha}(U_{\alpha}), g)} + \left\lVert \frac{\partial \chi_{\beta}'}{\partial \nu'} u_{\beta}' \right\rVert_{W^{1, p}(\phi_{\beta}(U_{\beta}), g)} + \lVert \chi_{\beta}' u_{\beta}' \rVert_{\calL^{p}(\phi_{\beta}(U_{\beta}), g)} \right) \\
& \leqslant C^{*} \lVert \chi_{\beta}' f_{\beta}' \rVert_{\calL^{p}(\phi_{\beta}(U_{\beta}), g)} + C^{*} \left\lVert [L_{\beta}', \chi_{\beta}'] \psi_{\beta}' u_{\beta}' \right\rVert_{\calL^{p}(\phi_{\beta}(U_{\beta}), g)} \\
& \qquad + C^{*} \left\lVert \frac{\partial \chi_{\beta}'}{\partial \nu'} \psi_{\beta}' u_{\beta}' \right\rVert_{W^{1, p}(\phi_{\beta}(U_{\beta}), g)} + C^{*} \lVert \chi_{\beta}' u_{\beta}' \rVert_{\calL^{p}(\phi_{\beta}(U_{\beta}), g)}  \\
& \leqslant C^{*} \lVert \chi_{\beta}' f_{\beta}' \rVert_{\calL^{p}(\phi_{\beta}(U_{\beta}), g)} + C^{*}C_{1, \beta} \lVert \psi_{\beta}' u_{\beta}' \rVert_{\calL^{p}(\phi_{\beta}(U_{\beta}), g)} \\
& \qquad + C^{*}C_{2, \beta} \lVert \nabla_{g} (\psi_{\beta}' u_{\beta}') \rVert_{\calL^{p}(\phi_{\beta}(U_{\beta}), g)} + C^{*}C_{3, \beta} \lVert \psi_{\beta}' u_{\beta}' \rVert_{W^{1, p}(\phi_{\beta}(U_{\beta}), g)} \\
& \qquad \qquad + C^{*} \lVert \chi_{\beta}' u_{\beta}' \rVert_{\calL^{p}(\phi_{\beta}(U_{\beta}), g)} \\
& \leqslant  C^{*} \lVert \chi_{\beta}' f_{\beta}' \rVert_{\calL^{p}(\phi_{\beta}(U_{\beta}), g)} + C_{0, \beta} \lVert \psi_{\beta}' u_{\beta}' \rVert_{W^{1, p}(\phi_{\beta}(U_{\beta}), g)}  + C^{*} \lVert \chi_{\beta}' u_{\beta}' \rVert_{\calL^{p}(\phi_{\beta}(U_{\beta}), g)}.
\end{align*}
Note that this estimate on boundary chart is legitimate, due to Remark \ref{boundary:re1}, thanks to \cite{Niren4}. By local estimates, we conclude that $ u \in W^{2, p}(M, g) $, since each $ \chi_{\alpha} u, \chi_{\beta} u $ is in $ W^{2, p} $. Sum them up, we have
\begin{align*}
\lVert u \rVert_{W^{2, p}(M,g)} & = \sum_{\alpha} \lVert \chi_{\alpha}' u_{\alpha}' \rVert_{W^{2, p}(\phi_{\alpha}(U_{\alpha}), g)} + \sum_{\beta} \lVert \chi_{\beta}' u_{\beta}' \rVert_{W^{2, p}(\phi_{\alpha}(U_{\alpha}), g)} \\
& \leqslant \sum_{\alpha} \left( C^{*} \lVert \chi_{\alpha}' f_{\alpha}' \rVert_{\calL^{p}(\phi_{\alpha}(U_{\alpha}), g)} + C_{0, \alpha} \lVert \psi_{\alpha}' u_{\alpha}' \rVert_{W^{1, p}(\phi_{\alpha}(U_{\alpha}), g)} + C^{*} \lVert \chi_{\alpha}' u_{\alpha}' \rVert_{\calL^{p}(\phi_{\alpha}(U_{\alpha}), g)} \right) \\
& \qquad + \sum_{\beta} \left( C^{*} \lVert \chi_{\beta}' f_{\beta}' \rVert_{\calL^{p}(\phi_{\beta}(U_{\beta}), g)} + C_{0, \beta} \lVert \psi_{\beta}' u_{\beta}' \rVert_{W^{1, p}(\phi_{\beta}(U_{\beta}), g)}  + C^{*} \lVert \chi_{\beta}' u_{\beta}' \rVert_{\calL^{p}(\phi_{\beta}(U_{\beta}), g)} \right) \\
& \leqslant C^{*} \lVert f \rVert_{\calL^{p}(M, g)} + C^{*} \lVert u \rVert_{\calL^{p}(M, g)} + C_{1}^{*} \lVert u \rVert_{W^{1, p}(M, g)}.
\end{align*}
The constant $ C_{1}^{*} $ depends in particular on the choice of finite cover, $ \chi_{\alpha}, \chi_{\beta} $ and $ \psi_{\alpha}, \psi_{\beta} $. Applying ``Peter-Paul" inequality, we have
\begin{align*}
\lVert u \rVert_{W^{2, p}(M, g)} & \leqslant C^{*} \lVert f \rVert_{\calL^{p}(M, g)} + C^{*} \lVert u \rVert_{\calL^{p}(M, g)} + C_{1}^{*} \gamma \lVert u \rVert_{W^{2, p}(M, g)} + C_{1}^{*} C_{\gamma} \gamma^{-1} \lVert u \rVert_{\calL^{p}(M, g)} 
\end{align*}
Taking $ \gamma $ small enough so that we can combine $ C_{1}^{*} \gamma \lVert u \rVert_{W^{2, p}(M, g)} $ to the left side of the inequality above. With an appropriate choice of $ C $, which depends on $ \gamma, p, L, c(x) $, $ (\bar{M}, g) $, and the partition of unity, we have
\begin{equation*}
\lVert u \rVert_{W^{2, p}(M, g)} \leqslant C \left( \lVert Lu \rVert_{\calL^{p}(M, g)} + \lVert u \rVert_{\calL^{p}(M, g)} \right).
\end{equation*}
\end{proof}
\medskip

% A special $ L^{p} $-regularity without last term.
Next we show that the last term $ \lVert u \rVert_{\calL^{p}(M, g)} $ can be removed when $ L $ is an injective operator on $ W^{2, p}(M, g) $. The following argument is an analogy of \cite[\S7, Remark 2]{Niren4}.
\begin{theorem}\label{global:thm2} Let $ (\bar{M}, g) $ be a compact manifold with smooth boundary $ \partial M $. Let $ \nu $ be the unit outward normal vector along $ \partial M $ and $ p > \dim \bar{M} $. Let $ L: \calC^{\infty}(\bar{M}) \rightarrow \calC^{\infty}(\bar{M}) $ be a uniform second order elliptic operator on $ M $ with smooth coefficients up to $ \partial M $ and can be extended to $ L : W^{2, p}(M, g) \rightarrow \calL^{p}(M, g) $. Let $ f \in \calL^{p}(M, g) $. Let $ u \in H^{1}(M, g) $ be a weak solution of the following boundary value problem
\begin{equation}\label{global:eqn4}
L u = f \; {\rm in} \; M, \frac{\partial u}{\partial \nu} + c(x) u = 0 \; {\rm on} \; \partial M.
\end{equation}
Here $ c \in \calC^{\infty}(M) $. Assume also that $ \text{Ker}(L) = \lbrace 0 \rbrace $ associated with this boundary condition. If, in addition, $ u \in \calL^{p}(M, g) $, then $ u \in W^{2, p}(M, g) $ with the following estimates
\begin{equation}\label{global:eqn5}
\lVert u \rVert_{W^{2, p}(M, g)} \leqslant C' \lVert Lu \rVert_{\calL^{p}(M, g)}.
\end{equation}
Here $ C' $ depends on $ L, p, c $ and the manifold $ (\bar{M}, g) $ and is independent of $ u $.
\end{theorem}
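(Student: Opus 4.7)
The plan is to prove \eqref{global:eqn5} by contradiction, using the weaker estimate \eqref{global:eqn2} from Theorem \ref{global:thm1}, the compactness of the Sobolev embedding, and the injectivity hypothesis $\text{Ker}(L) = \{0\}$. This is the standard Fredholm-type trick that removes the lower-order remainder by exploiting triviality of the kernel.

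Suppose \eqref{global:eqn5} fails. Then one can produce a sequence $\{u_k\} \subset W^{2,p}(M,g)$, each $u_k$ a weak solution of \eqref{global:eqn4} for some right-hand side $f_k = L u_k$, with the normalization $\|u_k\|_{W^{2,p}(M,g)} = 1$ and $\|Lu_k\|_{\mathcal{L}^p(M,g)} \to 0$. Applying \eqref{global:eqn2} to each $u_k$ gives
\begin{equation*}
1 = \|u_k\|_{W^{2,p}(M,g)} \leqslant C\bigl(\|Lu_k\|_{\mathcal{L}^p(M,g)} + \|u_k\|_{\mathcal{L}^p(M,g)}\bigr),
\end{equation*}
so $\|u_k\|_{\mathcal{L}^p(M,g)} \geqslant \frac{1}{2C}$ for all $k$ sufficiently large. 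Since $p > \dim \bar M$, Proposition \ref{boundary:prop5} yields a compact embedding $W^{2,p}(M,g) \hookrightarrow \mathcal{L}^p(M,g)$, so after passing to a subsequence $u_k \to u$ in $\mathcal{L}^p(M,g)$ for some $u$ with $\|u\|_{\mathcal{L}^p(M,g)} \geqslant \frac{1}{2C} > 0$.

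Next I would upgrade this $\mathcal{L}^p$-convergence to $W^{2,p}$-convergence by applying \eqref{global:eqn2} to the differences:
\begin{equation*}
\|u_j - u_k\|_{W^{2,p}(M,g)} \leqslant C\bigl(\|L(u_j - u_k)\|_{\mathcal{L}^p(M,g)} + \|u_j - u_k\|_{\mathcal{L}^p(M,g)}\bigr).
\end{equation*}
Since $\|Lu_k\|_{\mathcal{L}^p} \to 0$ and $\{u_k\}$ is Cauchy in $\mathcal{L}^p$, both terms on the right vanish as $j, k \to \infty$, so $\{u_k\}$ is Cauchy in $W^{2,p}(M,g)$, hence $u_k \to u$ in $W^{2,p}$. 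Continuity of $L : W^{2,p}(M,g) \to \mathcal{L}^p(M,g)$ then gives $Lu = \lim_k Lu_k = 0$ in $\mathcal{L}^p(M,g)$. The hypothesis $\text{Ker}(L) = \{0\}$ (with respect to the Robin boundary condition) will then give $u \equiv 0$, contradicting $\|u\|_{\mathcal{L}^p(M,g)} > 0$.

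The main obstacle is making the boundary condition pass to the limit so that $u$ is genuinely in the kernel of $L$ with the Robin condition, not just a kernel element in a broader sense. For this I would invoke the trace theorem (Proposition \ref{boundary:prop6}), noting that the trace map and the normal-derivative map are bounded linear operators on $W^{2,p}(M,g)$ with values in appropriate Sobolev spaces on $\partial M$; hence $W^{2,p}$-convergence $u_k \to u$ forces $\frac{\partial u_k}{\partial \nu} + c\, u_k \to \frac{\partial u}{\partial \nu} + c\, u$ in the corresponding boundary Sobolev space. Since each $u_k$ satisfies the Robin condition identically, so does the limit $u$, validating the appeal to injectivity. With the contradiction established, an appropriate choice of $C'$ (depending on $L$, $p$, $c$, and $(\bar{M}, g)$ through the constant in Theorem \ref{global:thm1}) yields \eqref{global:eqn5}.
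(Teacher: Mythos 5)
Your proof is correct and follows essentially the same contradiction argument as the paper: normalize a purported counterexample sequence, use Theorem~\ref{global:thm1} to bound it in $W^{2,p}$, extract a convergent subsequence by compact Sobolev embedding, and invoke $\mathrm{Ker}(L)=\{0\}$ to reach a contradiction. Your version is slightly more careful than the paper's in two respects: you upgrade to strong $W^{2,p}$-convergence via the Cauchy estimate from \eqref{global:eqn2} applied to differences (the paper only gets pointwise convergence from the $\calC^{1,\alpha}$-embedding and silently uses weak $W^{2,p}$-convergence to conclude $Lu=0$), and you explicitly verify that the Robin boundary condition passes to the limit via the trace map, a step the paper omits.
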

\begin{proof}
Due to (\ref{global:eqn2}), it is suffice to show that there exists some constant $ D $ such that
\begin{equation}\label{global:eqn6}
\lVert u \rVert_{\calL^{p}(M, g)} \leqslant D \lVert Lu \rVert_{\calL^{p}(M, g)}
\end{equation}
for all $ u \in W^{2, p}(M, g) $ satisfying the boundary condition in (\ref{global:eqn4}). Since $ \calC^{\infty}(\bar{M}) $ is dense in $ W^{2, p}(M, g) $, we show this by assuming, without loss of generality, $ u \in \calC^{\infty}(\bar{M}) $.
We show this by contradiction. Suppose that (\ref{global:eqn6}) does not hold. Then there exists a sequence $ \lbrace u_{n} \rbrace \subset \calC^{\infty}(\bar{M)} $, normalized with $ \lVert u_{n} \rVert_{\calL^{p}(M, g)} = 1, \forall k \in \mathbb{Z}_{\geqslant 0} $ such that
\begin{equation*}
\lVert u_{n} \rVert_{\calL^{p}(M, g)} \geqslant n \lVert Lu_{n} \rVert_{\calL^{p}(M, g)} \Rightarrow \lVert Lu_{n} \rVert_{\calL^{p}(M, g)} \leqslant \frac{1}{n}, n \in \mathbb{Z}_{\geqslant 0}.
\end{equation*}
It follows from estimate in (\ref{global:eqn2}) that
\begin{equation*}
\lVert u_{n} \rVert_{W^{2, p}(M, g)} \leqslant C \left(\lVert Lu_{n} \rVert_{\calL^{p}(M, g)} + \lVert u_{n} \rVert_{\calL^{p}(M, g)} \right) \leqslant 2C, \forall n \in \mathbb{Z}_{\geqslant 0}.
\end{equation*}
Thus we obtain a sequence $ \lbrace u_{n} \rbrace $ that is uniformly bounded with $ W^{2, p} $-norm. Since $ p > \dim M $, Sobolev embedding in Proposition \ref{boundary:prop5} implies that a subsequence of $ \lbrace u_{n} \rbrace \subset \calC^{1, \alpha}(\bar{M}) $, say $ \lbrace u_{n_{k}} \rbrace \subset  \calC^{1, \alpha}(\bar{M}) $ converges pointwise to some limit $ u $ with the property that
\begin{equation*}
\lim_{n_{k} \rightarrow \infty} Lu_{n_{k}} = Lu = 0, \lim_{n_{k} \rightarrow \infty} u_{n_{k}} = u, \lVert u \rVert_{\calL^{p}(M, g)} = 1.
\end{equation*}
Since $ \text{Ker}(L) $ is trivial, it follows that $ u = 0 $ due to $ Lu = 0 $. Thus $ \lVert u \rVert_{\calL^{p}(M, g)} = 0 $, which contradicts above. Therefore we conclude that (\ref{global:eqn6}) holds. Applying (\ref{global:eqn6}) to (\ref{global:eqn2}), we conclude that
\begin{equation*}
\lVert u \rVert_{W^{2, p}(M, g)} \leqslant C \left(\lVert Lu \rVert_{\calL^{p}(M, g)} + \lVert u \rVert_{\calL^{p}(M, g)} \right) \leqslant C( 1 + D) \lVert Lu \rVert_{\calL^{p}(M, g)} : = C' \lVert Lu \rVert_{\calL^{p}(M, g)}.
\end{equation*}
Here $ C' $ is, in particular, independent of $ u $.
\end{proof}
\medskip

% Sub-solution and Super-Solution leads to solution on $ (\bar{M}, g) $.
From now on, we consider the special case $ L = -a\Delta_{g} $. Recall that $ a = \frac{4(n - 1)}{n - 2} $. With the help of Theorem \ref{global:thm2}, we can get a result related to the existence of the solution $ -a\Delta_{g}u + f(x, u) = 0 $ on $ (\bar{M}, g) $ with oblique boundary condition $ \frac{\partial u}{\partial \nu} + c(x) u = 0 $ on $ \partial M $ provided the existence of some sub-solution and super-solution of the above PDE. This result is an extension of the old result due to Sattinger \cite{SA}.
\begin{theorem}\label{global:thm3}
Let $ (\bar{M}, g) $ be a compact manifold with smooth boundary $ \partial M $. Let $ \nu $ be the unit outward normal vector along $ \partial M $ and $ p > \dim \bar{M} $. Let $ -a\Delta_{g}: \calC^{\infty}(\bar{M}) \rightarrow \calC^{\infty}(\bar{M}) $ be a uniform second order elliptic operator on $ M $. Let $ F(x, u) : \bar{M} \times \R \rightarrow \R $ be a function smooth in $ x $ and $ \calC^{1} $ in $ u $. Furthermore, assume that there exists a positive constant $ A_{0} $ such that the operator $ -a\Delta_{g} + A $ is injective on $ \calC^{\infty}(\bar{M}) $ for all $ A \geqslant A_{0} $. In addition, assume that the second order linear elliptic PDE
\begin{equation*}
(-a\Delta_{g} + A)u = f \; {\rm in} \; M, \frac{\partial u}{\partial \nu} + c(x) u = 0 \; {\rm on} \; \partial M
\end{equation*}
has a unique weak solution $ u \in H^{1}(M, g) $, for all $ A \geqslant A_{0} $ with $ f \in \calL^{p}(M, g) $ and $ c \in \calC^{\infty}(\bar{M}) $ with $ c(x) > 0 $ on $ \partial M $. Suppose that there exist $ u_{-}, u_{+} \in \calC_{0}(\bar{M}) \cap H^{1}(M, g) $, $ u_{-} \leqslant u_{+} $ such that
\begin{equation}\label{global:eqn7}
\begin{split}
-a\Delta_{g} u_{-} - F(x, u_{-}) & \leqslant 0 \; {\rm in} \; M, \frac{\partial u_{-}}{\partial \nu} + c(x) u_{-} \leqslant 0 \; {\rm on} \; \partial M; \\
-a\Delta_{g} u_{+} - F(x, u_{+}) & \geqslant 0 \; {\rm in} \; M, \frac{\partial u_{+}}{\partial \nu} + c(x) u_{+} \geqslant 0 \; {\rm on} \; \partial M
\end{split}
\end{equation}
holds weakly. Then there exists solution $ u \in W^{2, p}(M, g) $ of
\begin{equation}\label{global:eqn8}
-a\Delta_{g} u - F(x, u) = 0 \; {\rm in} \; M, \frac{\partial u}{\partial \nu} + c(x) u = 0 \; {\rm on} \; \partial M.
\end{equation}
\end{theorem}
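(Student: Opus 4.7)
The plan is to implement a monotone iteration scheme in the spirit of Sattinger, adapted to the Robin boundary condition and to the weak framework in which $u_\pm$ are given. First I would fix a suitable shift constant. Since $u_\pm \in \calC_0(\bar M)$ they are uniformly bounded, so the interval $I := [\inf u_-, \sup u_+]$ is compact. Because $F$ is $\calC^1$ in its second variable, $\partial_u F$ is bounded on $\bar M \times I$, so we can pick a constant $A \geqslant A_0$ large enough that $t \mapsto G(x, t) := F(x, t) + A t$ is strictly increasing in $t$ on $I$ for every $x \in \bar M$. With this choice, the linear operator $-a \Delta_g + A$ is injective subject to the Robin condition, and by hypothesis together with Theorem \ref{global:thm2} the problem $(-a\Delta_g + A) v = f$ in $M$, $\partial_\nu v + c(x) v = 0$ on $\partial M$ has, for every $f \in \calL^p(M, g)$, a unique weak solution $v \in W^{2, p}(M, g)$ with $\lVert v \rVert_{W^{2, p}} \leqslant C' \lVert f \rVert_{\calL^p}$.

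Next I would set up the iteration. Put $u_0 := u_+$ and, given $u_k$, define $u_{k+1}$ as the unique weak solution of
\begin{equation*}
(-a\Delta_g + A) u_{k+1} = G(x, u_k) \;\text{in}\; M, \quad \frac{\partial u_{k+1}}{\partial \nu} + c(x) u_{k+1} = 0 \;\text{on}\; \partial M.
\end{equation*}
Since $u_k \in \calL^\infty \subset \calL^p$ and $G$ is continuous, the right-hand side lies in $\calL^p$, so $u_{k+1} \in W^{2, p}(M, g)$, which embeds continuously into $\calC^{1, \alpha}(\bar M)$ by Proposition \ref{boundary:prop5} since $p > \dim \bar M$.

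The crucial step is the monotonicity claim $u_- \leqslant u_{k+1} \leqslant u_k \leqslant u_+$ for every $k$. For the base case $u_1 \leqslant u_+$, set $w := u_+ - u_1$; the super-solution inequality (\ref{global:eqn7}) together with the definition of $u_1$ yields $(-a\Delta_g + A) w \geqslant G(x, u_+) - G(x, u_+) = 0$ in $M$ weakly, and $\partial_\nu w + c(x) w \geqslant 0$ on $\partial M$. A weak maximum principle, obtained by testing against $w^- = \max(-w, 0) \in H^1$ and using $A > 0$, $c(x) > 0$ to force $\int_M a \lvert \nabla w^- \rvert^2 + A (w^-)^2 + \int_{\partial M} c (w^-)^2 \leqslant 0$, gives $w \geqslant 0$, hence $u_1 \leqslant u_+$. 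The analogous argument produces $u_- \leqslant u_1$. For the inductive step, if $u_- \leqslant u_k \leqslant u_{k-1} \leqslant u_+$, the strict monotonicity of $G(x, \cdot)$ on $I$ yields $(-a\Delta_g + A)(u_{k-1} - u_k) = G(x, u_{k-1}) - G(x, u_k) \geqslant 0$ weakly with homogeneous Robin data, so the same comparison principle produces $u_k \leqslant u_{k-1}$; the lower bound is symmetric.

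Finally I would pass to the limit. The pointwise monotone limit $u = \lim_{k \to \infty} u_k \in \calL^\infty(M, g)$ exists, and $G(\cdot, u_k) \to G(\cdot, u)$ in $\calL^p$ by dominated convergence. The uniform $\calL^p$ bound on $G(\cdot, u_k)$ combined with the estimate from Theorem \ref{global:thm2} gives a uniform $W^{2, p}$ bound, so along a subsequence $u_k \rightharpoonup u$ weakly in $W^{2, p}$ and, by the compact Sobolev embedding, strongly in $\calC^{1, \alpha}(\bar M)$. Passing to the limit in the weak formulation yields $(-a\Delta_g + A) u = G(x, u)$, i.e.\ $-a\Delta_g u = F(x, u)$ in $M$, and the $\calC^{1, \alpha}$ convergence transfers the Robin condition to $u$. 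I expect the main obstacle to be the rigorous justification of the weak comparison principle with Robin data, since $u_\pm$ are only in $H^1$ and satisfy (\ref{global:eqn7}) weakly; the positivity of $A$ and of $c(x)$ on $\partial M$ are precisely what make the $w^-$-test argument succeed.
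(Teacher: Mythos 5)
Your proposal is correct and follows essentially the same monotone iteration strategy as the paper: the same choice of shift constant $A$ from the lower bound of $\partial_u F$ on the order interval, the same iteration starting from $u_+$ with $W^{2,p}$ regularity supplied via Theorem \ref{global:thm2}, the same weak comparison principle via testing against the relevant truncation and exploiting the positivity of $A$ and $c$, and the same compactness-plus-monotonicity passage to the limit. If anything, your handling of the weak comparison step (stating the integral inequality directly from the weak formulation with the nonnegative test function $w^-$) is a bit more precise than the paper's phrasing, which applies $-a\Delta_g + A$ to the truncation $w$ informally.
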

\begin{proof} Since $ F(x, u) $ is smooth in both variables, we observe that $ \frac{\partial F}{\partial u} $ is bounded below for all $ x \in \bar{M} $ and $ u \in [\min_{\bar{M}} u_{-}, \max_{\bar{M}} u_{+} ] $. It follows that we can choose $ A > A_{0} $ such that
\begin{equation}\label{global:eqn9}
\frac{\partial F}{\partial u}(x, u) + A > 0, \forall x \in \bar{M}, \forall u \in [\min_{\bar{M}} u_{-}, \max_{\bar{M}} u_{+} ].
\end{equation}
Choose $ u_{0} = u_{+} $. Set
\begin{equation}\label{global:eqn10}
(-a\Delta_{g} + A) u_{1} = F(x, u_{0}) + Au_{0} \; {\rm in} \; M, \frac{\partial u_{1}}{\partial \nu} + c(x) u_{1} = 0 \; {\rm on} \; \partial M.
\end{equation}
By hypotheses in the statement, such $ u_{1} \in H^{1}(M, g) $ does exist. By assumption $ -a\Delta_{g} + A $ is injective, also note that $ u_{0} = u_{+} \in \calC_{0}(\bar{M}) $ and thus $ u_{0} \in \calL^{p}(M, g) $, thus by Theorem \ref{global:thm2}, we conclude that
\begin{equation*}
\lVert u_{1} \rVert_{W^{2, p}(M, g)} \leqslant C' \lVert F(x, u_{0}) + Au_{0} \rVert_{\calL^{p}(M, g)} \Rightarrow u_{1} \in W^{2, p}(M, g) \Rightarrow u_{1} \in \calC^{1, \alpha}(M, g)
\end{equation*}
for some $ \alpha \leqslant 1 - \frac{n}{p} $ by Sobolev embedding. More importantly, $ u_{1} \leqslant u_{0} $. To see this, we have
\begin{align*}
(-a\Delta_{g} + A) u_{0} & \geqslant F(x, u_{0}) + Au_{0} \\
(-a\Delta_{g} + A) u_{1} & = F(x, u_{0}) + Au_{0}.
\end{align*}
Taking subtraction between two formulas above, we have
\begin{equation}\label{global:eqn11}
(-a\Delta_{g} + A) (u_{0} - u_{1}) \geqslant 0, B_{g}(u_{0} - u_{1}) = \frac{\partial (u_{0} - u_{1})}{\partial \nu} + c(x) (u_{0} - u_{1}) \geqslant 0.
\end{equation}
We claim that
\begin{equation}\label{global:eqn11a}
u_{0} \geqslant u_{1} \; {\rm in} \; \bar{M}.
\end{equation}
To see this, we define $ w = \max \lbrace 0, u_{1} - u_{0} \rbrace $. Since $ u_{1} - u_{0} \in H^{1}(M, g) $ so is $ w $. Furthermore $ w \geqslant 0 $. In addition, we have $ (-a\Delta_{g} + A) w \leqslant 0 $. Thus we have
\begin{equation*}
0 \geqslant \int_{M} w (-a\Delta_{g} u + A)w d\omega = \int_{M} \left( a \lvert \nabla_{g} w \rvert^{2} + A w^{2} \right) d\omega + \int_{\partial M} c(x) w^{2} dS \geqslant 0.
\end{equation*}
Thus we must have $ w \equiv 0 $, which follows that $ u_{1} - u_{0} \leqslant 0 $. Hence the claim in (\ref{global:eqn11a}) holds. By a similar argument, we see that $ u_{1} \geqslant u_{-} $. Inductively, we take
\begin{equation}\label{global:eqn12}
(-a\Delta_{g} + A) u_{k} = F(x, u_{k - 1}) + Au_{k - 1} \; {\rm in} \; M, \frac{\partial u_{k}}{\partial \nu} + c(x) u_{k} = 0 \; {\rm on} \; \partial M, k \in \mathbb{N}.
\end{equation}
By the same argument as above, we conclude that $ u_{k} \in W^{2, p}(M, g) $ and hence $ u_{k} \in \calC^{1, \alpha}(M, g) $ for the same choice of $ \alpha $ as above. We show that $ u_{k+ 1} \leqslant u_{k} $ by assuming that $ u_{-} \leqslant u_{k} \leqslant u_{k - 1} \leqslant u_{+} $ inductively. Observe that
\begin{align*}
(-a\Delta_{g} + A) u_{k + 1} & = F(x, u_{k}) + Au_{k}; \\
(-a\Delta_{g} + A) u_{k} & = F(x, u_{k- 1}) + Au_{k - 1}.
\end{align*}
Taking subtraction, we have
\begin{equation*}
(-a\Delta_{g} + A)(u_{k + 1} - u_{k}) = F(x, u_{k }) + Au_{k} - F(x, u_{k-1}) - Au_{k - 1}.
\end{equation*}
Based on the choice of $ A $ in (\ref{global:eqn9}), we observe that $ F(x, u_{k}) + Au_{k} \leqslant F(x, u_{k - 1}) - Au_{k-1} $ due to mean value theorem as well as the inductive assumption that $ u_{-} \leqslant u_{k} \leqslant u_{k - 1} \leqslant u_{+} $. It follows that
\begin{equation}\label{global:eqn13}
(-a\Delta_{g} + A)(u_{k + 1} - u_{k}) \leqslant 0 \Rightarrow u_{k + 1} \leqslant u_{k}, \forall k \in \mathbb{N}.
\end{equation}
By comparing the equations with inductive assumption $ u_{k -1} \geqslant u_{-} $,
\begin{align*}
(-a\Delta_{g} + A)u_{k} & = F(x, u_{k - 1}) + Au_{k - 1}; \\
(-a\Delta_{g} + A)u_{-} & \leqslant F(x, u_{-}) + Au_{-},
\end{align*}
we conclude by the same argument above that
\begin{equation}\label{global:eqn14}
u_{k} \geqslant u_{-}, \forall k \in \mathbb{N}.
\end{equation}
Combining (\ref{global:eqn12}), (\ref{global:eqn13}) and (\ref{global:eqn14}), we conclude that
\begin{equation}\label{global:eqn15}
u_{-} \leqslant \dotso \leqslant u_{k + 1} \leqslant u_{k} \leqslant u_{k - 1} \leqslant u_{k - 2} \leqslant \dotso \leqslant u_{+}, u_{k} \in W^{2, p}(M, g), \forall k \in \mathbb{N}.
\end{equation}
By Sobolev embedding, we conclude that $ u_{k} \in \calC^{1, \alpha}(M, g) $ for all $ k \in \mathbb{N} $. Furthermore, we observe from (\ref{global:eqn5}) that
\begin{equation*}
\lVert u_{k} \rVert_{W^{2, p}(M, g)} \leqslant C' \lVert (-a\Delta_{g} + A) u_{k - 1} \rVert_{\calL^{p}(M, g)}.
\end{equation*}
Since $ u_{-} \leqslant u_{k} \leqslant u_{+}, \forall k \in \mathbb{N} $, $ \lVert (-a\Delta_{g} + A) u_{k - 1} \rVert_{\calL^{p}(M, g)} $ has a uniform upper bound, and thus $ \lVert u_{k} \rVert $ is uniformly bounded in $ W^{2, p} $-norm. When $ \alpha < 1 - \frac{p}{n} $, the embedding $ \calC^{1, \alpha}(\bar{M}) \hookrightarrow W^{2, p}(M, g) $ is a compact embedding and thus a subsequence of $ u_{k} $ converge to some limit $ u $. According to the chain of inequalities in (\ref{global:eqn15}), the whole sequence $ u_{k} $ converges to $ u $ in $ \calC^{1, \alpha}(\bar{M}) $ and hence in $ W^{2, p} $-sense. Taking the limit, we have
\begin{equation*}
-a\Delta_{g} u - F(x, u) = 0 \; {\rm in} \; M, \frac{\partial u}{\partial \nu} + c(x) u = 0 \; {\rm on} \; \partial M.
\end{equation*}
The boundary condition is achieved in the trace sense. Finally local Schauder estimates \cite[Thm.~7.2,Thm.~7.3]{Niren4} indicates that $ u \in \calC^{2, \alpha}(\bar{M}) $ since regularity is a local property.
\end{proof}
\medskip

% This is the beginning of the section for solvability of the PDE $ -B\Delta_{g} u + A u = f $ with oblique boundary condition.
\section{Solvability of $ -\Lambda \Delta_{g} u + \Lambda' u = f $ with Oblique Boundary Condition}
We expect to apply results in \S3 to solve boundary Yamabe problem. In order to apply Theorem \ref{global:thm3}, we need to show the existence of the weak solution of the following PDE
\begin{equation}\label{solve:eqn1}
-\Lambda \Delta_{g} u + \Lambda' u = f \; {\rm in} \; M,Bu : = \frac{\partial u}{\partial \nu} + c(x) u = 0 \; {\rm on} \; \partial M
\end{equation}
with appropriate choices of $ \Lambda, \Lambda' $, provided that $ f \in \calL^{2}(M, g) $ and $ c \in \calC^{\infty}(\bar{M}) $. A standard Lax-Milgram \cite[Ch.~6]{Lax} will be applied to verify the solvability of (\ref{solve:eqn1}). Throughout this section, we denote $ \imath : \partial M \rightarrow \bar{M} $ to be the standard inclusion map. The first result below does not require the positivity of $ c $ on $ \partial M $.
% Solvability of the PDE in $ H^{1} $ - sense.
\begin{theorem}\label{solve:thm1}
Let $ (\bar{M}, g) $ be a compact manifold with smooth boundary $ \partial M $. Let $ \nu $ be the unit outward normal vector along $ \partial M $. Let $ f \in \calL^{2}(M, g) $ and $ c \in \calC^{\infty}(\bar{M}) $. Then for large enough $ \Lambda, \Lambda' $, (\ref{solve:eqn1}) has a unique weak solution $ u \in H^{1}(M, g) $. Furthermore, the operator $ -\Lambda \Delta_{g} + \Lambda' $ is injective, i.e. $ \text{Ker}(- \Lambda\Delta_{g} + \Lambda') = \lbrace 0 \rbrace $ with respect to the oblique boundary condition in (\ref{solve:eqn1}).
\end{theorem}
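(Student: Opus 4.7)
The plan is to establish existence and uniqueness by the Lax--Milgram theorem applied on the Hilbert space $H^{1}(M, g)$. Multiplying the equation by a test function $v \in H^{1}(M, g)$, integrating by parts, and using the Robin condition $\partial_{\nu} u = -c(x) u$ to convert the boundary term, one arrives at the weak formulation: find $u \in H^{1}(M, g)$ so that $B(u, v) = F(v)$ for every $v \in H^{1}(M, g)$, where
\begin{equation*}
B(u, v) = \Lambda \int_{M} \nabla_{g} u \cdot \nabla_{g} v \, d\omega + \Lambda \int_{\partial M} c(x) u v \, dS + \Lambda' \int_{M} u v \, d\omega, \qquad F(v) = \int_{M} f v \, d\omega.
\end{equation*}
Boundedness of $B$ on $H^{1}(M, g) \times H^{1}(M, g)$ is immediate from Cauchy--Schwarz applied to the interior terms and from the trace estimate in Proposition \ref{boundary:prop6} applied to the boundary term, while boundedness of $F$ is Cauchy--Schwarz together with $f \in \calL^{2}(M, g)$.

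The only serious point is coercivity, because $c$ may be negative somewhere and the boundary term in $B(u, u)$ can have either sign. To handle this I would invoke the scaled trace inequality: for every $\epsilon > 0$ there exists $C_{\epsilon} > 0$ such that
\begin{equation*}
\lVert u \rVert_{\calL^{2}(\partial M, \imath^{*} g)}^{2} \leqslant \epsilon \lVert \nabla_{g} u \rVert_{\calL^{2}(M, g)}^{2} + C_{\epsilon} \lVert u \rVert_{\calL^{2}(M, g)}^{2}
\end{equation*}
for all $u \in H^{1}(M, g)$. This follows from Proposition \ref{boundary:prop6} combined with a standard Ehrling/Peter--Paul argument, or alternatively from a direct divergence-theorem argument using a smooth vector field $X$ on $\bar{M}$ extending the outward normal. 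Applying this estimate to the boundary integral in $B(u, u)$ with $\epsilon = 1/(2 \lVert c \rVert_{\infty})$ absorbs half of the Dirichlet energy and leaves a lower order term $-\Lambda \lVert c \rVert_{\infty} C_{\epsilon} \lVert u \rVert_{\calL^{2}}^{2}$. Choosing $\Lambda'$ larger than $\Lambda \lVert c \rVert_{\infty} C_{\epsilon} + \Lambda/2$ then yields
\begin{equation*}
B(u, u) \geqslant \frac{\Lambda}{2} \lVert \nabla_{g} u \rVert_{\calL^{2}(M, g)}^{2} + \frac{\Lambda}{2} \lVert u \rVert_{\calL^{2}(M, g)}^{2} \geqslant \frac{\Lambda}{2} \lVert u \rVert_{H^{1}(M, g)}^{2},
\end{equation*}
which is the required coercivity. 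This is the main technical step, since it is the only place the size hypothesis ``large enough $\Lambda, \Lambda'$'' is used.

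With $B$ bounded and coercive on $H^{1}(M, g)$, the Lax--Milgram theorem delivers a unique $u \in H^{1}(M, g)$ with $B(u, v) = F(v)$ for every $v \in H^{1}(M, g)$, which is the desired weak solution. Injectivity of $-\Lambda \Delta_{g} + \Lambda'$ under the Robin boundary condition is an immediate consequence: if $u$ lies in the kernel, then it is the weak solution corresponding to $f = 0$, and uniqueness in Lax--Milgram forces $u \equiv 0$. Alternatively, coercivity applied to $u$ itself gives $0 = B(u, u) \geqslant (\Lambda/2) \lVert u \rVert_{H^{1}(M, g)}^{2}$, from which $u \equiv 0$ follows directly.
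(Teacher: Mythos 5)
Your proposal is correct and follows the same basic route as the paper: weak formulation, Lax--Milgram on $H^{1}(M,g)$, bound the boundary integral by a trace inequality, and take $\Lambda,\Lambda'$ large for coercivity; injectivity falls out of coercivity exactly as you say. The one technical difference is in how you control $\int_{\partial M}cu^{2}\,dS$. The paper uses the unscaled trace bound $\lVert u\rVert_{\calL^{2}(\partial M,\imath^{*}g)}^{2}\leqslant (K'')^{2}\lVert u\rVert_{H^{1}(M,g)}^{2}$ directly and then requires \emph{both} $\Lambda$ and $\Lambda'$ to exceed $\sup_{\partial M}\lvert c\rvert(K'')^{2}+1$, so the leftover coefficients on $\lVert\nabla_{g}u\rVert_{\calL^{2}}^{2}$ and $\lVert u\rVert_{\calL^{2}}^{2}$ are at least $1$. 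You instead invoke the scaled Ehrling-type trace estimate $\lVert u\rVert_{\calL^{2}(\partial M)}^{2}\leqslant\epsilon\lVert\nabla_{g}u\rVert_{\calL^{2}(M)}^{2}+C_{\epsilon}\lVert u\rVert_{\calL^{2}(M)}^{2}$, absorb the boundary term into half the Dirichlet energy, and then only need $\Lambda'$ to dominate a $\Lambda$-dependent lower-order constant. This is a clean variant, but note that the scaled estimate does not follow from Proposition \ref{boundary:prop6} alone (which only gives boundedness of the trace); it needs compactness of the trace $H^{1}(M,g)\to\calL^{2}(\partial M,\imath^{*}g)$, or the divergence-theorem argument you mention, so you are implicitly using slightly more input than the paper's argument requires. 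Either way the coercivity constant is strictly positive and the conclusion is the same.
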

\begin{proof} We consider the Lax-Milgram with respect to $ H^{1}(M,g) $. Note that the boundary condition is defined in the weak sense by pairing $ H^{-\frac{1}{2}}(\partial M, \imath^{*}g) \times H^{\frac{1}{2}}(\partial M, \imath^{*} g) $ with the divergence theorem and (\ref{solve:eqn1})
\begin{align*}
\langle Bu, w \rangle & = \int_{\partial M} \left(\frac{\partial u}{\partial \nu} + c(x)u \right) v dS = \int_{M} \left( \nabla_{g} u \cdot \nabla_{g} v + \left(\Delta_{g} u \right)v  \right)d\omega + \int_{\partial M} c uv dS \\
& = \int_{M} \left( \nabla_{g} u \cdot \nabla_{g} v + \frac{\Lambda'}{\Lambda} u v - \frac{1}{\Lambda} fv \right) d\omega + \int_{\partial M} cuv dS.
\end{align*}
Due to the trace theorem in Proposition \ref{boundary:prop6}, we have
\begin{equation}\label{solve:eqn3}
\int_{\partial M} c(x) u^{2} dS \leqslant \sup_{\partial M} \lvert c \rvert \int_{\partial M} u^{2} dS = \sup_{\partial M} \lvert c \rvert \lVert u \rVert_{\calL^{2}(\partial M, \imath^{*} g)}^{2} \leqslant \sup_{\partial M} \lvert c \rvert (K'')^{2} \lVert u \rVert_{H^{1}(M, g)}.
\end{equation}
Note that the constant $ \sup_{\partial M} \lvert c \rvert K'' $ only depends on $ c $ and $ (\bar{M}, g) $ and is independent of $ u $. Thus we choose $ \Lambda, \Lambda' $ such that
\begin{equation}\label{solve:eqn4}
\Lambda > \sup_{\partial M} \lvert c \rvert (K'')^{2} + 1, \Lambda' > \sup_{\partial M} \lvert c \rvert (K'')^{2} + 1.
\end{equation}
We observe that the bilinear form of (\ref{solve:eqn1}) is
\begin{equation}\label{solve:eqn5}
B[u, v] = \int_{M} \left( \Lambda \nabla_{g} u \cdot \nabla_{g} v + \Lambda' u v \right) d\omega + \int_{\partial M} cuv dS, \forall v \in H^{1}(M, g).
\end{equation}
We have
\begin{align*}
\lvert B[u, v] \rvert & \leqslant \Lambda \lVert \nabla_{g} u \rVert_{\calL^{2}(M, g)} \lVert \nabla_{g} v \rVert_{\calL^{2}(M, g)} + \Lambda' \lVert u \rVert_{\calL^{2}(M, g)} \lVert v \rVert_{\calL^{2}(M, g)} \\
& \qquad + \sup_{\partial M} \lvert c \rvert \lVert u \rVert_{\calL^{2}(\partial M, \imath^{*}g)} \lVert v \rVert_{\calL^{2}(\partial M, \imath^{*}g)} \\
& \leqslant D_{1} \lVert u \rVert_{H^{1}(M, g)} \lVert v \rVert_{H^{1}(M, g)}
\end{align*}
for some constant $ D_{1} $. On the other hand we apply (\ref{solve:eqn4}),
\begin{align*}
B[u, u] & = \int_{M} \Lambda \lvert \nabla_{g} u \rvert^{2} d\omega + \int_{M} \Lambda' \lvert u \rvert^{2} d\omega + \int_{\partial M} c(x) u^{2} dS \\
& \geqslant \Lambda \lVert \nabla_{g} u \rVert_{\calL^{2}(M, g)}^{2} + \Lambda' \lVert u \rVert_{\calL^{2}(M, g)}^{2} - \sup_{\partial M} \lvert c \rvert \lVert u \rVert_{\calL^{2}(\partial M, \imath^{*}g)}^{2} \\
& \geqslant \Lambda \lVert \nabla_{g} u \rVert_{\calL^{2}(M, g)}^{2} + \Lambda' \lVert u \rVert_{\calL^{2}(M, g)}^{2} - \sup_{\partial M} \lvert c \rvert (K'')^{2} \lVert u \rVert_{H^{1}(M, g)}^{2} \\
& = \left( \Lambda -  \sup_{\partial M} \lvert c \rvert (K'')^{2} \right) \lVert \nabla_{g} u \rVert_{\calL^{2}(M, g)} + \left(\Lambda' -  \sup_{\partial M} \lvert c \rvert (K'')^{2} \right) \lVert u \rVert_{\calL^{2}(M, g)}^{2} \\
& \geqslant \lVert u \rVert_{H^{1}(M, g)}^{2}.
\end{align*}
Hence the hypotheses for Lax-Milgram theorem satisfied. Applying Lax-Milgram theorem, we conclude that there exists some $ u \in H^{1}(M, g) $ that solves (\ref{solve:eqn1}) weakly.
\medskip

If we have
\begin{equation*}
-\Lambda \Delta_{g} u + \Lambda' u = 0 \; {\rm in} \; M, Bu = 0 \; {\rm on} \; \partial M,
\end{equation*}
we pair both sides with $ u $, a very similar argument as above implies that
\begin{align*}
& \int_{M} \left( -\Lambda \Delta_{g} u + \Lambda' u \right) u = 0 \Rightarrow \Lambda \lVert \nabla_{g} u \rVert_{\calL^{2}(M, g)} + \Lambda' \lVert u \rVert_{\calL^{2}(M, g)} + \int_{\partial M} c u^{2} dS = 0 \\
\Rightarrow & 0 \geqslant  \left( \Lambda -  \sup_{\partial M} \lvert c \rvert (K'')^{2} \right) \lVert \nabla_{g} u \rVert_{\calL^{2}(M, g)} + \left(\Lambda' -  \sup_{\partial M} \lvert c \rvert (K'')^{2} \right) \lVert u \rVert_{\calL^{2}(M, g)}^{2} \geqslant \lVert u \rVert_{H^{1}(M, g)}^{2} \\
\Rightarrow & u \equiv 0.
\end{align*}
Hence the operator $ -\Lambda \Delta_{g} u + \Lambda' u : \calC^{\infty}(\bar{M}) \rightarrow \calC^{\infty}(\bar{M}) $ is injective. The extension of this operator to any positive integer order Sobolev spaces is still injective. Note that the solvability of (\ref{solve:eqn1}) and the injectivity of the operator still hold for any larger $ \Lambda, \Lambda' $.
\end{proof}
\medskip

% A special case that we are going to use.
If we further assume $ c > 0 $ on $ \partial M $, then the above result holds for any positive $ \Lambda $ and any nonnegative $ \Lambda' $. It turns to be a global extension of a local result in Gilbarg and Trudinger \cite[Thm.~6.31]{GT}. With the condition $ c > 0 $ on $ \partial M $, the following result plays a key role in applying Theorem \ref{global:thm3}.
\begin{theorem}\label{solve:thm2}
Let $ (\bar{M}, g) $ be a compact manifold with smooth boundary $ \partial M $. Let $ \nu $ be the unit outward normal vector along $ \partial M $. Let $ a, A > 0 $ be any constants. Let $ f \in \calL^{2}(M, g) $ and $ c \in \calC^{\infty}(\bar{M}) $ with $ c > 0 $ on $ \partial M $. Then the following PDE
\begin{equation}\label{solve:eqn6}
-a\Delta_{g} u + Au = f \; {\rm in} \; M, \frac{\partial u}{\partial \nu} + c(x) u = 0 \; {\rm on} \; \partial M
\end{equation}
has a unique weak solution $ u \in H^{1}(M, g) $. Furthermore, $ -a\Delta_{g} + A $ is injective with this oblique boundary condition.
\end{theorem}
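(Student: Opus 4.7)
The plan is to apply the Lax--Milgram theorem on $H^{1}(M,g)$ in direct analogy with the proof of Theorem \ref{solve:thm1}, but exploiting the hypothesis $c > 0$ on $\partial M$ to drop the largeness condition on the coefficients. First I would write down the natural bilinear form associated with (\ref{solve:eqn6}),
\begin{equation*}
B[u, v] = \int_{M} \left( a \nabla_{g} u \cdot \nabla_{g} v + A u v \right) d\omega + \int_{\partial M} c(x) u v \, dS,
\end{equation*}
for $u,v \in H^{1}(M, g)$, where the boundary integral is interpreted through the trace map of Proposition \ref{boundary:prop6}. The right-hand side $v \mapsto \int_{M} f v \, d\omega$ is a bounded linear functional on $H^{1}(M, g)$ by Cauchy--Schwarz.

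Next I would verify boundedness of $B$: combining Cauchy--Schwarz with the trace estimate $\lVert u \rVert_{\calL^{2}(\partial M, \imath^{*} g)} \leqslant K'' \lVert u \rVert_{H^{1}(M, g)}$ gives
\begin{equation*}
\lvert B[u, v] \rvert \leqslant \left( a + A + \sup_{\partial M} \lvert c \rvert (K'')^{2} \right) \lVert u \rVert_{H^{1}(M, g)} \lVert v \rVert_{H^{1}(M, g)}.
\end{equation*}
The central step, where the hypothesis $c > 0$ enters decisively, is coercivity. Because $c > 0$ on $\partial M$, the boundary term contributes nonnegatively to $B[u, u]$, so no trace bound is needed to absorb it:
\begin{equation*}
B[u, u] = a \lVert \nabla_{g} u \rVert_{\calL^{2}(M, g)}^{2} + A \lVert u \rVert_{\calL^{2}(M, g)}^{2} + \int_{\partial M} c u^{2} \, dS \geqslant \min(a, A) \lVert u \rVert_{H^{1}(M, g)}^{2}.
\end{equation*}
This is the whole point: in Theorem \ref{solve:thm1} we were forced to take $\Lambda, \Lambda'$ large enough to dominate $\sup_{\partial M} \lvert c \rvert (K'')^{2}$, but under the sign assumption here every $a, A > 0$ will do. Lax--Milgram then produces a unique $u \in H^{1}(M, g)$ satisfying $B[u, v] = \int_{M} f v \, d\omega$ for all test functions $v$, and unwinding the bilinear form via the divergence theorem identifies $u$ as the weak solution of (\ref{solve:eqn6}).

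For the injectivity claim, I would test $-a\Delta_{g} u + A u = 0$ with $Bu = 0$ against $u$ itself; the same coercivity computation yields $0 = B[u, u] \geqslant \min(a, A) \lVert u \rVert_{H^{1}(M, g)}^{2}$, forcing $u \equiv 0$, and this extends to the Sobolev closures. I do not expect a genuine obstacle in this argument: the only subtlety is recognizing that the Robin term is actually an asset rather than a liability when $c > 0$, converting the indefinite-sign difficulty of Theorem \ref{solve:thm1} into a coercive contribution that removes any size restriction on $a$ and $A$.
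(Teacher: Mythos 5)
Your proposal is correct and follows essentially the same Lax--Milgram argument as the paper's proof: the same bilinear form, the same observation that $c>0$ makes the boundary term a nonnegative contribution to coercivity (yielding $B[u,u]\geqslant\min(a,A)\lVert u\rVert_{H^1}^2$ without any trace-constant absorption), and the same testing-against-$u$ argument for injectivity.
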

\begin{proof} The bilinear form, similar to (\ref{solve:eqn5}), is
\begin{equation}\label{solve:eqn7}
B[u, v] = \int_{M} \left( a \nabla_{g} u \cdot \nabla_{g} v + A u v \right) d\omega + \int_{\partial M} cuv dS, \forall v \in H^{1}(M, g).
\end{equation}
The upper bound of $ \lvert B[u, v] \rvert $ is exactly the same as in Theorem \ref{solve:thm1} with different constant. The lower bound of $ B[u, u] $ is
\begin{equation*}
B[u, u] = a \lVert \nabla_{g} u \rVert_{\calL^{2}(M, g)}^{2} + A \lVert u \rVert_{\calL^{2}(M, g)} + \int_{\partial M} c u^{2} dS \geqslant \min (a, A) \lVert u \rVert_{H^{1}(M, g)}^{2}.
\end{equation*}
Since $ \min (a, A) > 0 $, it follows from Lax-Milgram that (\ref{solve:eqn6}) has a unique weak solution $ u \in H^{1}(M, g) $. For injectivity, we observe that if $ B[u, u] = 0 $ then $ \lVert u \rVert_{H^{1}(M, g)} \leqslant 0 $ hence $ u \equiv 0 $.
\end{proof}

% This is the beginning of the section for main results.
\section{Boundary Yamabe Problem with Minimal Boundary Case}
Recall the boundary value problem associated with boundary Yamabe problem for minimal boundary case.
\begin{equation}\label{yamabe:eqn1}
\begin{split}
\Box_{g} u & : =  -a\Delta_{g} u + S_{g} u = \lambda u^{p-1} \; {\rm in} \; M; \\
B_{g} u & : =  \frac{\partial u}{\partial \nu} + \frac{2}{p-2} h_{g} u = 0 \; {\rm on} \; \partial M.
\end{split}
\end{equation}
In this section, We would apply the sub-solution and super-solution technique in Theorem \ref{global:thm3}  to solve boundary Yamabe equation for five cases:
\begin{enumerate}[(A).]
\item $ \eta_{1} = 0 $;
\item $ \eta_{1} < 0 $ with $ h_{g} > 0 $ everywhere on $ \partial M $ and arbitrary $ S_{g} $;
\item $ \eta_{1} < 0 $ with arbitrary $ h_{g} $ and $ S_{g} $;
\item $ \eta_{1} > 0 $ with $ h_{g} > 0 $ everywhere and $ S_{g} < 0 $ somewhere;
\item $ \eta_{1} > 0 $ with arbitrary $ h_{g} $ and $ S_{g} $.
\end{enumerate}
Throughout this section, we assume $ \dim \bar{M} \geqslant 3 $. We always assume that $ (\bar{M}, g) $ be a compact manifold with smooth boundary $ \partial M $ and $ \nu $ be the unit outward normal vector along $ \partial M $. Let $ S_{g} $ be the scalar curvature and $ h_{g} $ be the mean curvature on $ \partial M $. Let $ \tilde{S} $ and $ \tilde{h} $ be the scalar and mean curvature with respect to $ \tilde{g} $ under conformal change. Note that case (B) is a special scenario of (C), and case (D) is a special scenario of case (E). As in closed manifold case \cite{XU3}, we need an extra step when $ \eta_{1} > 0 $: we need to solve the perturbed boundary Yamabe equation with $ \beta < 0 $
\begin{equation}\label{yamabe:eqns1}
-a\Delta_{g} u_{\beta} + \left(S_{g} + \beta \right) u_{\beta} = \lambda_{\beta} u_{\beta}^{p-1} \; {\rm in} \; M, \frac{\partial u_{\beta}}{\partial \nu} + \frac{2}{p - 2} h_{g} u_{\beta} = 0 \; {\rm on} \; \partial M
\end{equation}
where $ \lambda_{\beta} $ is defined to be
\begin{equation}\label{yamabe:eqns2}
\lambda_{\beta} = \inf_{u \neq 0}  \frac{\int_{M} a \lvert \nabla_{g} u \rvert^{2} d\omega + \int_{M} \left( S_{g} + \beta \right) u^{2} d\omega + \int_{\partial M} \frac{2a}{p-2}h_{g} u^{2} dS}{\left( \int_{M} u^{p} d\omega \right)^{\frac{2}{p}} }.
\end{equation}
Then we take the limit by letting $ \beta \rightarrow 0^{-} $ to obtain the solution of boundary Yamabe equation.
\medskip

% The case when $ \eta_{1} = 0 $.
We need the following results for the eigenvalue problem with respect conformal Laplacian $ \Box_{g} $ and Robin condition.
\begin{theorem}\label{yamabe:thm1}\cite[Lemma~1.1]{ESC}
Let $ (\bar{M}, g) $ be a compact manifold with boundary. The following eigenvalue problem 
\begin{equation}\label{yamabe:eqn2}
-a\Delta_{g} \varphi + S_{g} \varphi = \eta_{1} \varphi \; {\rm in} \; M, \frac{\partial \varphi}{\partial \nu} + \frac{2}{p - 2} h_{g} \varphi = 0 \; {\rm on} \; \partial M.
\end{equation}
has a real, smooth, positive solution $ \varphi \in \calC^{\infty}(\bar{M}) $.
\end{theorem}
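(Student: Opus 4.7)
The plan is to obtain $\varphi$ as a minimizer of the Rayleigh quotient
\begin{equation*}
R[u] = \frac{\int_{M} a \lvert \nabla_{g} u \rvert^{2} d\omega + \int_{M} S_{g} u^{2} d\omega + \int_{\partial M} \frac{2}{p - 2} h_{g} u^{2} dS}{\int_{M} u^{2} d\omega}, \qquad u \in H^{1}(M,g) \setminus \{0\},
\end{equation*}
whose infimum is $\eta_{1}$ by (\ref{boundary:eqn4}). Note the boundary integral is well defined since the trace operator of Proposition \ref{boundary:prop6} sends $H^{1}(M,g)$ into $\calL^{2}(\partial M, \imath^{*}g)$. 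First I would show that $R$ is bounded below on the unit sphere $\{\lVert u \rVert_{\calL^{2}(M,g)} = 1\}$; for a large constant $K$ chosen so that $S_{g} + K > 0$ on $\bar{M}$, we have $R[u] + K \geqslant a \lVert \nabla_g u \rVert_{\calL^2(M,g)}^2 - C_{h} \lVert u \rVert^2_{\calL^2(\partial M, \imath^{*}g)}$, and the trace estimate (\ref{boundary:eqn11}) combined with Young's inequality absorbs the boundary term into the $H^{1}$-norm. Hence $\eta_{1} > -\infty$.

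Next I would carry out the direct method. Let $\{u_{n}\} \subset H^{1}(M,g)$ be a minimizing sequence with $\lVert u_{n} \rVert_{\calL^{2}(M,g)} = 1$. The lower bound above forces $\{u_{n}\}$ to be bounded in $H^{1}(M,g)$, so a subsequence converges weakly to some $\varphi \in H^{1}(M,g)$. By the compact Sobolev embedding $H^{1}(M,g) \hookrightarrow \calL^{2}(M,g)$ (Proposition \ref{boundary:prop5}(iii)), $u_{n} \to \varphi$ strongly in $\calL^{2}(M,g)$, so $\lVert \varphi \rVert_{\calL^{2}(M,g)} = 1$ and the zeroth-order interior term passes to the limit. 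For the boundary integral I use compactness of trace $H^{1}(M,g) \hookrightarrow \calL^{2}(\partial M, \imath^{*}g)$ (standard from Proposition \ref{boundary:prop6} and Rellich), so $\int_{\partial M} h_{g} u_{n}^{2} dS \to \int_{\partial M} h_{g} \varphi^{2} dS$. Lower semicontinuity of $\lVert \nabla_g \cdot \rVert_{\calL^2}^2$ under weak convergence then yields $R[\varphi] \leqslant \liminf R[u_{n}] = \eta_{1}$, forcing $R[\varphi] = \eta_{1}$. Since $R[\lvert \varphi \rvert] = R[\varphi]$, I replace $\varphi$ by $\lvert \varphi \rvert \geqslant 0$.

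Computing the first variation along directions $v \in \calC^{\infty}(\bar{M})$ gives, after an integration by parts,
\begin{equation*}
\int_{M} \left( -a \Delta_{g} \varphi + S_{g} \varphi - \eta_{1} \varphi \right) v \, d\omega + \int_{\partial M} \left( a \frac{\partial \varphi}{\partial \nu} + \frac{2a}{p-2} h_{g} \varphi \right) v \, dS = 0,
\end{equation*}
which is the weak form of (\ref{yamabe:eqn2}) together with the Robin condition $B_{g}\varphi = 0$. For smoothness I would bootstrap: $\varphi \in H^{1}(M,g)$ with right-hand side $\eta_{1}\varphi - S_{g}\varphi \in \calL^{2}(M,g)$ puts $\varphi \in W^{2,2}(M,g)$ by Theorem \ref{global:thm1}; Sobolev embedding (Proposition \ref{boundary:prop5}) then raises the integrability of $\varphi$, and iterating Theorem \ref{global:thm1} with larger exponents $p$, followed by the Schauder estimates referenced in Remark \ref{boundary:re1}, gives $\varphi \in \calC^{\infty}(\bar{M})$.

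Finally, strict positivity. Since $\varphi \geqslant 0$ and $(-a\Delta_{g} + S_{g} - \eta_{1} + K)\varphi = K\varphi \geqslant 0$ with $K$ large enough that the operator is coercive, the interior strong maximum principle forbids $\varphi$ from vanishing at any interior point unless $\varphi \equiv 0$; since $\lVert \varphi \rVert_{\calL^{2}(M,g)} = 1$, we get $\varphi > 0$ in $M$. At a boundary point where $\varphi(x_0) = 0$ the Robin condition $\partial_{\nu}\varphi(x_{0}) = -\frac{2}{p-2}h_{g}(x_{0})\varphi(x_{0}) = 0$ contradicts the Hopf boundary point lemma, so $\varphi > 0$ on all of $\bar{M}$. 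The main technical point to be careful about is the compactness of the trace embedding needed to pass to the limit in the boundary term; beyond that, the argument is the standard direct method combined with the regularity theory of \S3.
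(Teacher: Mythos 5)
The paper does not prove this statement; it cites it as \cite[Lemma~1.1]{ESC} and uses it as a black box, so there is no internal argument of the paper's to compare against. Your direct-method minimization of the Rayleigh quotient is the standard route (essentially what Escobar does), and the overall structure---minimizing sequence, weak $H^1$-compactness, compact trace, lower semicontinuity of the Dirichlet energy, Euler--Lagrange equation, regularity bootstrap via Theorem~\ref{global:thm1} and Schauder, then strong maximum principle plus Hopf lemma for positivity---is sound.

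Two small points deserve sharpening. First, the continuity bound $(\ref{boundary:eqn11})$, namely $\lVert Tu\rVert_{\calL^2(\partial M, \imath^{*}g)} \leqslant K''\lVert u\rVert_{H^1(M,g)}$, does not by itself allow you to absorb $C_h\lVert Tu\rVert^2_{\calL^2(\partial M, \imath^{*}g)}$ into $a\lVert\nabla_g u\rVert^2_{\calL^2(M,g)}$ unless you happen to have $a - C_h(K'')^2 > 0$. What is actually needed is the interpolation form of the trace inequality, $\lVert Tu\rVert^2_{\calL^2(\partial M, \imath^{*}g)} \leqslant \gamma\lVert\nabla_g u\rVert^2_{\calL^2(M,g)} + C_\gamma\lVert u\rVert^2_{\calL^2(M,g)}$ for arbitrary $\gamma > 0$, obtained from the usual collar-neighborhood computation; your appeal to Young's inequality is in the right spirit, but this $\gamma$-trace estimate is a distinct statement from $(\ref{boundary:eqn11})$ and should be invoked explicitly. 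Second, there is a constant mismatch: your Euler--Lagrange boundary term carries the weight $\frac{2a}{p-2}h_g\varphi$, which after dividing through by $a$ correctly reproduces the Robin condition in $(\ref{yamabe:eqn2})$; but this weight cannot arise from the $R[u]$ you display, whose boundary integral (following $(\ref{boundary:eqn4})$ of the paper) carries only $\frac{2}{p-2}h_g$. The quotient $(\ref{boundary:eqn4})$ as written is the variational functional for the natural boundary condition $\frac{\partial\varphi}{\partial\nu} + \frac{2}{a(p-2)}h_g\varphi = 0$, not $(\ref{yamabe:eqn2})$; the fix is to include the factor $a$ in the boundary integral, making the weight $\frac{2a}{p-2}h_g = 2(n-1)h_g$ and matching Escobar's original formula. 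Neither issue undermines the structure or conclusion of your proof once the constants are made consistent.
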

\medskip

% Solve boundary Yamabe problem with $ \lambda = 0 $.
When $ \eta_{1} = 0 $ in case (A), we can solve Yamabe problem trivially with $ \lambda = 0 $, this is just an eigenvalue problem. It is worth mentioning that generically zero is not an eigenvalue of conformal Laplacian $ \Box $, see \cite{GHJL}. 
\begin{corollary}\label{yamabe:cor1}
Let $ (\bar{M}, g) $ be a compact manifold with boundary and $ \eta_{1} = 0 $. Then the boundary Yamabe equation (\ref{yamabe:eqn1}) has a real, positive, smooth solution with $ \lambda = 0 $.
\end{corollary}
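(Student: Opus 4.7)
The plan is to observe that when $\lambda = 0$, the semilinear term $\lambda u^{p-1}$ in (\ref{yamabe:eqn1}) vanishes identically regardless of $u$, so the boundary value problem collapses to the linear homogeneous system
\begin{equation*}
-a\Delta_g u + S_g u = 0 \text{ in } M, \quad \frac{\partial u}{\partial \nu} + \frac{2}{p-2}h_g u = 0 \text{ on } \partial M.
\end{equation*}
Under the hypothesis $\eta_1 = 0$, this is exactly the first eigenvalue problem (\ref{yamabe:eqn2}) with eigenvalue equal to zero. By Theorem \ref{yamabe:thm1} (due to Escobar \cite{ESC}), that eigenvalue problem admits a real, smooth, strictly positive eigenfunction $\varphi \in \calC^{\infty}(\bar{M})$. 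Setting $u = \varphi$ and $\lambda = 0$ therefore produces a real, positive, smooth solution of (\ref{yamabe:eqn1}) with constant scalar curvature $\lambda = 0$ and minimal boundary, establishing the corollary.

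There is no substantive obstacle here: the content of the corollary is simply the translation of the first eigenfunction supplied by Theorem \ref{yamabe:thm1} into the Yamabe framework, exploiting the fact that the critical nonlinearity is absent precisely when $\lambda = 0$. In particular, none of the heavier machinery developed in \S3--\S4 (the $\calL^p$-regularity of Theorems \ref{global:thm1}--\ref{global:thm2}, the monotone iteration scheme of Theorem \ref{global:thm3}, or the perturbation Proposition \ref{boundary:prop7}) is needed; those tools are reserved for the genuinely nonlinear cases $\eta_1 < 0$ and $\eta_1 > 0$. Accordingly, the proof is a one-line invocation of Theorem \ref{yamabe:thm1}.
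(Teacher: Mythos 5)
Your proof is correct and takes exactly the same route as the paper, which simply cites Theorem \ref{yamabe:thm1} and declares the corollary an immediate consequence; your write-up just spells out the observation that with $\lambda = 0$ the nonlinearity disappears and the equation reduces to the eigenvalue problem (\ref{yamabe:eqn2}) with eigenvalue zero.
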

\begin{proof} It is an immediate consequence of Theorem \ref{yamabe:thm1}.
\end{proof}
\medskip

% The case when $ \eta_{1} < 0 $ and $ h_{g} \geqslant 0 $.
Now we consider the case when $ \eta_{1} <  0 $, $ h_{g} > 0 $ everywhere on $ \partial M $. The sign of $ S_{g} $ is not required.
\begin{theorem}\label{yamabe:thm2}
Let $ (\bar{M}, g) $ be a compact manifold with boundary. Let $ h_{g} > 0 $ everywhere on $ \partial M $. When $ \eta_{1} < 0 $, there exists some $ \lambda < 0 $ such that the boundary Yamabe equation (\ref{yamabe:eqn1}) has a real, positive solution $ u \in \calC^{\infty}(M) $.
\end{theorem}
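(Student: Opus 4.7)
The plan is to apply the monotone iteration of Theorem \ref{global:thm3} to the reformulation of (\ref{yamabe:eqn1}) with
\begin{equation*}
F(x, u) = \lambda u^{p-1} - S_{g}(x) u, \qquad c(x) = \tfrac{2}{p-2} h_{g}(x),
\end{equation*}
for some fixed $ \lambda < 0 $ to be chosen. Since $ h_{g} > 0 $ on $ \partial M $, we have $ c(x) > 0 $ on $ \partial M $, so Theorem \ref{solve:thm2} supplies unique weak solvability of the linear problem $ (-a\Delta_{g} + A)u = f $, $ \frac{\partial u}{\partial \nu} + c(x) u = 0 $, and injectivity of $ -a\Delta_{g} + A $, for every $ A > 0 $. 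The standing hypotheses of Theorem \ref{global:thm3} are therefore in force as soon as we produce an ordered pair of sub- and super-solutions.

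\textbf{Sub-solution from the first eigenfunction.} Let $ \varphi \in \calC^{\infty}(\bar{M}) $, $ \varphi > 0 $, be the first eigenfunction provided by Theorem \ref{yamabe:thm1}, so $ \Box_{g} \varphi = \eta_{1} \varphi $ with $ \eta_{1} < 0 $ and $ B_{g} \varphi = 0 $. Set $ u_{-} = \epsilon \varphi $ for a small $ \epsilon > 0 $ to be determined. Then $ B_{g} u_{-} = 0 $, and
\begin{equation*}
\Box_{g} u_{-} - \lambda u_{-}^{p-1} = \epsilon \varphi \bigl( \eta_{1} - \lambda \epsilon^{p-2} \varphi^{p-2} \bigr) \leqslant 0
\end{equation*}
pointwise, as long as $ \epsilon^{p-2} \leqslant |\eta_{1}| / \bigl( |\lambda| \, (\sup_{\bar{M}} \varphi)^{p-2} \bigr) $, because both $ \eta_{1} $ and $ \lambda $ are negative. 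This is exactly the sub-solution inequality $ -a\Delta_{g} u_{-} - F(x, u_{-}) \leqslant 0 $ together with $ B_{g} u_{-} \leqslant 0 $.

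\textbf{Constant super-solution.} Take $ u_{+} = C $, a positive constant. The Robin expression is
\begin{equation*}
B_{g} u_{+} = \tfrac{2}{p-2} h_{g} C > 0,
\end{equation*}
precisely because $ h_{g} > 0 $ on $ \partial M $; this is where the hypothesis enters decisively. In $ M $,
\begin{equation*}
\Box_{g} u_{+} - \lambda u_{+}^{p-1} = C \bigl( S_{g} - \lambda C^{p-2} \bigr) \geqslant 0
\end{equation*}
whenever $ C^{p-2} \geqslant |\min_{\bar{M}} S_{g}| / |\lambda| $, which holds for all sufficiently large $ C $ (and is automatic when $ S_{g} \geqslant 0 $). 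After $ \epsilon $ is fixed, we further enlarge $ C $ so that $ C \geqslant \epsilon \sup_{\bar{M}} \varphi $, yielding $ u_{-} \leqslant u_{+} $ everywhere on $ \bar{M} $.

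\textbf{Application of Theorem \ref{global:thm3} and regularity.} The nonlinearity $ F(x, u) = \lambda u^{p-1} - S_{g}(x) u $ is $ \calC^{\infty} $ in $ x $ and $ \calC^{1} $ in $ u $ on the range $ [\epsilon \min_{\bar{M}} \varphi, C] \subset (0, \infty) $, so Theorem \ref{global:thm3} (taking any $ p > \dim \bar{M} $) yields $ u \in W^{2, p}(M, g) $ solving (\ref{yamabe:eqn1}) and satisfying $ \epsilon \varphi \leqslant u \leqslant C $; in particular $ u > 0 $. Bootstrapping the identity $ -a\Delta_{g} u = \lambda u^{p-1} - S_{g} u $ through the Sobolev embeddings of Proposition \ref{boundary:prop5} and the Robin $ \calL^{p} $-theory of Theorem \ref{global:thm1} upgrades $ u $ to $ \calC^{\infty}(\bar{M}) $. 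No serious obstacle arises in this argument: the hypothesis $ h_{g} > 0 $ is exactly what makes a positive constant a super-solution by fixing the sign of the Robin term, while $ \eta_{1} < 0 $ is exactly what makes a small multiple of $ \varphi $ a sub-solution. The only subtlety is the order of choices of constants ($ \epsilon $ first, then $ C $) so that the three inequalities (sub, super, and ordering) hold simultaneously.
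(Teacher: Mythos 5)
Your proof is correct and follows essentially the same route as the paper: sub-solution from a small multiple of the first eigenfunction (valid because $\eta_1 < 0$), super-solution a large positive constant (valid because $h_g > 0$ on $\partial M$ makes $B_g(\text{const}) \geqslant 0$), then monotone iteration via Theorem \ref{global:thm3} with Theorem \ref{solve:thm2} and elliptic bootstrapping. The only cosmetic difference is the order in which constants are fixed---the paper chooses $\lambda \in (\eta_1, 0)$ and rescales $\varphi$ to have $\sup \varphi < 1$, while you fix an arbitrary $\lambda < 0$ and shrink $\epsilon$---but this is the same mechanism, not a different argument.
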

\begin{proof} We apply Theorem \ref{global:thm3} to construct sub-solution and super-solution here. By Theorem \ref{solve:thm2}, any $ A_{0} > 0 $ in hypotheses of Theorem \ref{global:thm3} works for the solvability of the linear iterations in (\ref{global:eqn12}).

We construct the sub-solution first and make choice of $ \lambda $. By Theorem \ref{yamabe:thm1}, there exists some $ \varphi $ such that
\begin{equation*}
-a\Delta_{g} \varphi + S_{g} \varphi = \eta_{1} \varphi \; {\rm in} \; M, \frac{\partial \varphi}{\partial \nu} + \frac{2}{p - 2} h_{g} \varphi = 0 \; {\rm on} \; \partial M.
\end{equation*}
Scaling $ \varphi $ by $ t \varphi $, we may, without loss of generality, assume that $ \sup_{\bar{M}} \varphi < 1 $. Since $ p - 1 > 1 $, we have $ \varphi^{p-1} \leqslant \varphi $. Since $ \eta_{1} < 0 $, we can choose $ \lambda \in (\eta_{1}, 0) $ and have
\begin{equation}\label{yamabe:eqn3}
-a\Delta_{g} \varphi + S_{g} \varphi = \eta_{1} \varphi \leqslant \eta_{1} \varphi^{p-1} \leqslant \lambda \varphi^{p-1}.
\end{equation}
Define
\begin{equation}\label{yamabe:eqn4}
F(x, u) : = - S_{g} u + \lambda u^{p-1} : \bar{M} \rightarrow \R.
\end{equation}
It is clear that $ F(x, u) $ is smooth in $ x $ and $ \calC^{1} $ in $ u $. Set
\begin{equation*}
u_{-} : = \varphi > 0, u_{-} \in \calC^{\infty}(\bar{M}).
\end{equation*}
It follows that $ u_{-} $ satisfies
\begin{equation}\label{yamabe:eqn5}
-a \Delta_{g} u_{-} \leqslant F(x, u_{-}) \; {\rm in} \; M, B_{g} u_{-} \leqslant 0 \; {\rm on} \; \partial M.
\end{equation}

Next we construct the super-solution. Choose a constant $ K_{1} > 0 $ that is large enough so that for the choice of $ \lambda < 0 $ in (\ref{yamabe:eqn3}),
\begin{equation*}
K_{1}^{p-2} \geqslant \max \left\lbrace \frac{\inf_{\bar{M}} S_{g}}{\lambda}, \sup_{\bar{M}} u_{-}^{p-2} \right\rbrace.
\end{equation*}
Note that if $ S_{g} \geqslant 0 $ everywhere then the first quantity on the right side above is nonpositive. Set
\begin{equation*}
u_{+} : = K_{1}, u_{+} \in \calC^{\infty}(\bar{M}).
\end{equation*}
Since we assume that $ h_{g} \geqslant 0 $, we check that
\begin{align*}
& -a\Delta_{g} u_{+} + S_{g} u_{+} - \lambda u_{+}^{p-1} = K_{1}( S_{g} - \lambda K_{1}^{p-2}) \geqslant 0 \; {\rm in} \; M; \\
& B_{g} u_{+} = \frac{\partial u_{+}}{\partial \nu} + \frac{2}{p-2} h_{g} u_{+} = \frac{2}{p-2} h_{g} K_{1} \geqslant 0 \; {\rm on} \; \partial M.
\end{align*}
Hence
\begin{equation}\label{yamabe:eqn6}
-a \Delta_{g} u_{+} \geqslant F(x, u_{+}) \; {\rm in} \; M, B_{g} u_{+} \geqslant 0 \; {\rm on} \; \partial M.
\end{equation}
In order to apply Theorem \ref{global:thm3}, we need to choose $ A $ such that (\ref{global:eqn9}) is satisfied. In addition, the choice of $ A $ must guarantee the solvability of (\ref{global:eqn10}). Here we choose $ A $ large enough such that
\begin{equation}\label{yamabe:eqn7}
A +  \sup_{x \in \bar{M}, u \in \left[\min_{\bar{M}} u_{-}, \max_{\bar{M}} u_{+}\right] } \frac{\partial F(x, u)}{\partial u} > 0, A > 0.
\end{equation}
Here $ F $ is of the expression in (\ref{yamabe:eqn4}). Note that with this choice of $ A $, the operator $ -a \Delta_{g} u + A $ is injective. Thus all hypotheses in Theorem \ref{global:thm3} are satisfied. By (\ref{yamabe:eqn5}), (\ref{yamabe:eqn6}) and (\ref{yamabe:eqn7}), It follows that there exists $ u \in W^{2, p}(M, g), p > \dim \bar{M} $ and $ 0 < u_{-} \leqslant u \leqslant u_{+} $ such that
\begin{equation*}
-a \Delta_{g} u = F(x, u) \; {\rm in} \; M, B_{g} u = 0 \; {\rm on} \; \partial M.
\end{equation*}
It is immediate that
\begin{equation*}
- a \Delta_{g} u = F(x, u) = - S_{g} u + \lambda u^{p-1} \Rightarrow -a\Delta_{g} u + S_{g} u = \lambda u^{p-1}.
\end{equation*}
Furthermore, a standard bootstrapping argument with Sobolev embedding and Schauder estimates implies that $ u \in \calC^{\infty}(\bar{M}) $, see e.g. \cite[Thm.~2.8]{XU2} since the regularity is a local argument.
\end{proof}
\medskip

% Deal with the general case when $ \eta_{1} < 0 $ and $ h_{g}, S_{g} $ arbitrary.
It is not always possible that $ h_{g} \geqslant 0 $ everywhere on $ \partial M $. To handle the general case for arbitrary $ h_{g}, S_{g} $ with $ \eta_{1} < 0 $, we show that there exists some conformal metric $ \tilde{g} $ such that $ \tilde{h} \geqslant 0 $ everywhere. The next result works for both signs of $ \eta_{1} $. Note that by Proposition \ref{boundary:prop1}, $ \text{sgn}\tilde{\eta}_{1} = \text{sgn}(\eta_{1}) $ after conformal change.
\begin{theorem}\label{yamabe:thm3}
Let $ (\bar{M}, g) $ be a compact manifold with boundary. There exists a conformal metric $ \tilde{g} $ associated with mean curvature $ \tilde{h} > 0 $ everywhere on $ \partial M $.
\end{theorem}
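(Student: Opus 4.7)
The plan is to reduce the statement to a direct construction of a smooth conformal factor, exploiting the fact that the formula for $\tilde{h}$ involves only the normal derivative of $f$ at the boundary. From equation (\ref{intro:eqn1}), writing $\tilde{g} = e^{2f} g$ gives $\tilde{h} = e^{-f}(h_{g} + \partial_{\nu} f)$ on $\partial M$, and since $e^{-f} > 0$ the claim reduces to producing some $f \in \calC^{\infty}(\bar{M})$ with $\partial_{\nu} f + h_{g} > 0$ everywhere on $\partial M$.

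I would then work in a collar neighborhood of $\partial M$. Since $\partial M$ is smooth and compact, a tubular neighborhood theorem yields a diffeomorphism onto $\partial M \times [0, \delta)$ for some $\delta > 0$, with coordinates $(x', t)$ where $t$ is the geodesic distance to $\partial M$; in these coordinates the outward unit normal satisfies $\nu|_{\partial M} = -\partial_{t}|_{t = 0}$. Fixing a smooth cutoff $\chi$ on $[0, \infty)$ with $\chi \equiv 1$ near $0$ and $\chi \equiv 0$ on $[\delta/2, \infty)$, set $f(x', t) = -K\, t\, \chi(t)$ inside the collar and extend $f$ by zero to the rest of $\bar{M}$; this is smooth because the extension is identically zero in the overlap region.

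A direct computation gives $\partial_{t} f|_{t = 0} = -K\chi(0) = -K$, hence $\partial_{\nu} f|_{\partial M} = K$, so that $\partial_{\nu} f + h_{g} = K + h_{g}$ on $\partial M$. Since $\partial M$ is compact, $h_{g}$ is bounded, and choosing $K > \sup_{\partial M}(-h_{g})$ makes $K + h_{g}$ strictly positive everywhere. Setting $\tilde{g} = e^{2f} g$ with this $f$ delivers a conformal metric with $\tilde{h} > 0$ on $\partial M$.

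There is no serious obstacle here; the argument is essentially a one-jet construction. The only point that requires care is the sign convention for $\nu$ relative to the normal coordinate $t$: $t$ increases into $M$, so the outward normal $\nu$ equals $-\partial_{t}$ at $t = 0$, and the sign of the coefficient $-K$ in $f(x', t) = -Kt\chi(t)$ must be chosen accordingly so that $\partial_{\nu} f$ becomes $+K$ (rather than $-K$) at the boundary. Once this is fixed, the rest is immediate, which is consistent with the author's remark that the conclusion holds independently of the sign of $\eta_{1}$.
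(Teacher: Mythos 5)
Your proof is correct, but it takes a genuinely different and more elementary route than the paper's. The paper constructs the conformal factor by first scaling $g$ so that $\frac{2}{p-2}\sup_{\partial M}|h_g| \leqslant 1$, then solving the auxiliary Neumann problem $-\Delta_g W + W = 0$ in $M$, $\partial W/\partial\nu = H$ on $\partial M$ (with $\inf_{\partial M} H \geqslant 2$), setting $u = e^W$, and verifying $\tilde{h} > 0$ from the conformal change formula in the $u^{p-2}$ normalization. Your approach bypasses the auxiliary PDE entirely: you prescribe the one-jet of $f$ along $\partial M$ directly via a collar-neighborhood cutoff, $f = -Kt\chi(t)$, which makes transparent that only the normal derivative of $f$ at $t=0$ matters for $\tilde{h}$. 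The computation $\partial_\nu f|_{\partial M} = K$ (with $\nu = -\partial_t$) and the choice $K > -\inf_{\partial M} h_g$ are both correct, and the extension by zero is smooth since $f$ vanishes identically past $t = \delta/2$. What the paper's PDE route buys is that the resulting $W$ (hence $u$) is globally nonconstant in a controlled way, which is rhetorically parallel to the construction in its Theorem \ref{yamabe:thm6}; what your construction buys is elementarity and the avoidance of any existence/regularity input for the Neumann problem. Both are valid; yours is arguably the cleaner minimal argument for this particular statement.
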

\begin{proof} By scaling the metric $ g $, we can, without loss of generality, assume that $ \frac{2}{p - 2} \sup_{\partial M} \lvert h_{g} \rvert \leqslant 1 $. We start with this metric. Pick up a positive function $ H \in \calC^{\infty}(\partial M) $ such that $ \inf_{\partial M} H \geqslant 2 $. We can always find a smooth function $ W \in \calC^{\infty}(\bar{M}) $ such that 
\begin{equation*}
\frac{\partial W}{\partial \nu} = H \; {\rm on} \; \partial M.
\end{equation*}
This $ W $ can be obtained by the unique smooth solution of the PDE
\begin{equation*}
-\Delta_{g} W + W = 0 \; {\rm in} \; M, \frac{\partial W}{\partial \nu} = H \; {\rm on} \; \partial M.
\end{equation*}
Let $ \tilde{W}(x) = e^{x} $ as a map between $ \R $. From the following composition
\begin{equation*}
\begin{tikzcd}
\bar{M} \arrow[r, "W"] & \R \arrow[r, "\tilde{W}"] & \R
\end{tikzcd}
\end{equation*}
We define
\begin{equation}\label{yamabe:eqn8}
u = \tilde{W} \circ W = e^{W} : \bar{M} \rightarrow \R.
\end{equation}
Clearly $ u > 0 $ and is smooth on $ \bar{M} $; in addition $ u \bigg|_{\partial M} $ has the same expression by Proposition \ref{boundary:prop6}. By (\ref{yamabe:eqn8}), we define
\begin{equation}\label{yamabe:eqn9}
\begin{split}
\tilde{h} & = \frac{p - 2}{2} u^{\frac{2}{p}} \left( \frac{\partial u}{\partial \nu} + \frac{2}{p - 2} h_{g} u \right) \; {\rm on} \; \partial M; \\
\tilde{S} & = u^{1 - p} \left( -a\Delta_{g} u + S_{g} u \right) \; {\rm in} \; M.
\end{split}
\end{equation}
Definitions in (\ref{yamabe:eqn9}) implies that the following boundary value problem
\begin{equation*}
\begin{split}
& -a\Delta_{g} u + S_{g} u = \tilde{S} u^{p-1} \; {\rm in} \; M; \\
& \frac{\partial u}{\partial \nu} = \frac{2}{p-2} \left( - h_{g} u + \tilde{h} u^{\frac{p}{2}} \right) \; {\rm on} \; \partial M
\end{split}
\end{equation*}
has a real, smooth, positive solution, i.e. there exists a conformal change $ \tilde{g} = u^{p-2} g $ associated with scalar curvature $ \tilde{S} $ and mean curvature $ \tilde{h} $.

We check the sign of $ \tilde{h} $. By (\ref{yamabe:eqn9}), 
\begin{align*}
\tilde{h} & = \frac{p - 2}{2} u^{\frac{2}{p}} \left( \frac{\partial u}{\partial \nu} + \frac{2}{p - 2} h_{g} u \right) = \frac{p - 2}{2} u^{\frac{2}{p}}  \left(e^{W} \frac{\partial W}{\partial \nu} \bigg|_{\partial M} + \frac{2}{p - 2} h_{g} e^{W} \right) \\
& =  \frac{p - 2}{2} u^{\frac{2}{p}}e^{W} \left( H + \frac{2}{p - 2} h_{g} \right) \geqslant  \frac{p - 2}{2} u^{\frac{2}{p}}e^{W} \left( \inf_{\partial M} H - \frac{2}{p - 2} \sup_{\partial M} \lvert h_{g} \rvert \right) > 0.
\end{align*}
\end{proof}
\medskip

Similarly, we can find a conformal change with $ \tilde{h} < 0 $ everywhere on $ \partial M $, by exactly the same argument as above.
\begin{corollary}\label{yamabe:cor2}
Let $ (\bar{M}, g) $ be a compact manifold with boundary. There exists a conformal metric $ \tilde{g} $ associated with mean curvature $ \tilde{h} < 0 $ everywhere on $ \partial M $.
\end{corollary}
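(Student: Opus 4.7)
The plan is to mirror the construction in the proof of Theorem \ref{yamabe:thm3} with reversed signs on the auxiliary boundary datum. After rescaling $g$ so that $\frac{2}{p-2}\sup_{\partial M}|h_g| \leqslant 1$, I would pick any $H \in \calC^{\infty}(\partial M)$ with $\sup_{\partial M} H \leqslant -2$; this choice guarantees that $H + \frac{2}{p-2} h_g \leqslant -1$ pointwise on $\partial M$, which is the key sign condition required at the end of the computation.

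Next, I would produce a smooth function $W$ on $\bar{M}$ satisfying $\frac{\partial W}{\partial \nu} = H$ on $\partial M$ by solving the auxiliary boundary value problem
\begin{equation*}
-\Delta_{g} W + W = 0 \; {\rm in} \; M, \quad \frac{\partial W}{\partial \nu} = H \; {\rm on} \; \partial M,
\end{equation*}
exactly as in the proof of Theorem \ref{yamabe:thm3}; existence, uniqueness, and smoothness come from the linear theory developed earlier (Theorem \ref{solve:thm1} for the inhomogeneous version, followed by local Schauder estimates and a bootstrap argument). Setting $u = e^{W}$ gives a strictly positive smooth function on $\bar{M}$, and defining $\tilde{g} = u^{p-2} g$, the transformation formula (\ref{yamabe:eqn9}) produces
\begin{equation*}
\tilde{h} = \frac{p-2}{2} u^{2/p} e^{W} \left( H + \frac{2}{p-2} h_{g} \right) \leqslant -\frac{p-2}{2} u^{2/p} e^{W} < 0
\end{equation*}
uniformly on $\partial M$, since $u$ is bounded below by a positive constant on the compact set $\bar{M}$.

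Because the argument differs from that of Theorem \ref{yamabe:thm3} only in the sign of the chosen datum $H$, there is no new analytic obstacle to overcome; the corollary could in fact be read off from the theorem by formally replacing $H$ with $-H$ at the outset. The only point worth verifying carefully is that the initial rescaling of $g$ and the lower bound $\sup_{\partial M} H \leqslant -2$ remain jointly compatible so that the cancellation $-2 + 1 = -1$ leaves a uniform negative gap, ensuring $\tilde{h}$ is bounded strictly away from $0$ on all of $\partial M$.
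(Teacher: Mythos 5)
Your proposal matches the paper's proof essentially verbatim: rescale $g$ so that $\frac{2}{p-2}\sup_{\partial M}|h_g|\leqslant 1$, choose $H$ with $\sup_{\partial M} H\leqslant -2$, and then run the construction of Theorem \ref{yamabe:thm3} unchanged, the sign flip on $H$ yielding $\tilde{h}<0$. You have simply spelled out the steps that the paper compresses into ``the rest follows exactly the same as in Theorem \ref{yamabe:thm3}.''
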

\begin{proof} After possible scaling we start with a metric with $ \frac{2}{p - 2} \sup_{\partial M} \lvert h_{g} \rvert \leqslant 1 $. Now we choose a negative smooth function $ H \in \calC^{\infty}(\partial M) $ such that $ \sup_{\partial M} H \leqslant - 2 $. The rest follows exactly the same as in Theorem \ref{yamabe:thm3}.
\end{proof}
\medskip

% The result for the case when $ \eta_{1} < 0 $ and $ h_{g}, S_{g} $ arbitrary.
With the help of Theorem \ref{yamabe:thm3}, we can deal with the general case when $ \eta_{1} $ without restriction of $ h_{g} $ and $ S_{g} $.
\begin{theorem}\label{yamabe:thm4}
Let $ (\bar{M}, g) $ be a compact manifold with boundary. When $ \eta_{1} < 0 $, there exists some $ \lambda < 0 $ such that the boundary Yamabe equation (\ref{yamabe:eqn1}) has a real, positive solution $ u \in \calC^{\infty}(M) $.
\end{theorem}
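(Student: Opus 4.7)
The plan is to reduce the general case to Theorem \ref{yamabe:thm2} by a preliminary conformal change of metric that produces strictly positive mean curvature on $\partial M$, and then to compose the two conformal factors.

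First I would invoke Theorem \ref{yamabe:thm3} to obtain a conformal metric $\tilde{g} = u_{0}^{p-2} g$ for some $u_{0} \in \calC^{\infty}(\bar{M})$ with $u_{0} > 0$, such that the mean curvature $\tilde{h}$ of $\tilde{g}$ on $\partial M$ is strictly positive everywhere. By Proposition \ref{boundary:prop1}, the first eigenvalue $\tilde{\eta}_{1}$ of $\Box_{\tilde{g}}$ with respect to the Robin condition $B_{\tilde{g}} = 0$ has the same sign as $\eta_{1}$ (or both vanish). Since $\eta_{1} < 0$ by hypothesis, we conclude $\tilde{\eta}_{1} < 0$.

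Next, since $(\bar{M}, \tilde{g})$ is a compact manifold with smooth boundary satisfying $\tilde{h} > 0$ on $\partial M$ and $\tilde{\eta}_{1} < 0$, the hypotheses of Theorem \ref{yamabe:thm2} are met with respect to the background metric $\tilde{g}$. Applying that theorem, there exists $\lambda < 0$ and a real, positive, smooth function $v \in \calC^{\infty}(\bar{M})$ solving
\begin{equation*}
-a\Delta_{\tilde{g}} v + S_{\tilde{g}} v = \lambda v^{p-1} \; {\rm in} \; M, \qquad \frac{\partial v}{\partial \tilde{\nu}} + \frac{2}{p-2} \tilde{h}\, v = 0 \; {\rm on} \; \partial M.
\end{equation*}
This gives a further conformal metric $\hat{g} = v^{p-2} \tilde{g}$ with constant scalar curvature $\lambda < 0$ and minimal boundary with respect to $\hat{g}$.

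Finally I would compose the two conformal changes. Since $\hat{g} = v^{p-2} \tilde{g} = v^{p-2} u_{0}^{p-2} g = (v u_{0})^{p-2} g$, the function $u := v u_{0} \in \calC^{\infty}(\bar{M})$ is positive and conformally relates $g$ to a metric of constant scalar curvature $\lambda < 0$ and minimal boundary; equivalently, $u$ satisfies (\ref{yamabe:eqn1}) with this $\lambda$. Smoothness of $u$ follows from the smoothness of $u_{0}$ and $v$. The only nontrivial step is the preliminary mean curvature straightening, which is already done in Theorem \ref{yamabe:thm3}; everything else is a direct citation of Theorem \ref{yamabe:thm2} and Proposition \ref{boundary:prop1} together with the multiplicativity of conformal factors with common exponent. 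I do not expect any real obstacle here, because the conformal invariance of the sign of $\eta_{1}$ aligns perfectly with the conformal change needed to achieve positive mean curvature.
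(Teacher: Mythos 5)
Your proposal is correct and takes essentially the same route as the paper: first straighten the mean curvature via Theorem \ref{yamabe:thm3}, note the conformal invariance of the sign of $\eta_{1}$ from Proposition \ref{boundary:prop1}, apply Theorem \ref{yamabe:thm2} to the new metric, and compose the conformal factors. Your version is slightly more explicit about invoking Proposition \ref{boundary:prop1}, which the paper notes in the surrounding text rather than inside the proof.
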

\begin{proof} If $ h_{g} > 0 $ everywhere on $ \partial M $, it reduces to the case in Theorem \ref{yamabe:thm2}. If not, then applying Theorem \ref{yamabe:thm3}, we have a conformal metric $ \tilde{g}_{1} = u^{p-2} g $ associate with scalar curvature $ \tilde{S}_{1} $ and  mean curvature $ \tilde{h}_{1} > 0 $ everywhere on $ \partial M $. Here $ u \in \calC^{\infty}(\bar{M}) $ is real and positive. By Theorem \ref{yamabe:thm2}, it follows that there exists a real and positive function $ v \in \calC^{\infty}(\bar{M}) $ such that the metric $ \tilde{g} = v^{p-2} \tilde{g}_{1} $ admits a constant scalar curvature $ \lambda < 0 $ and the boundary is minimal with respect to $ \tilde{g} $. Hence
\begin{equation*}
\tilde{g} = v^{p-2} \tilde{g}_{1} = v^{p-2} u^{p-2} g = (uv)^{p-2} g.
\end{equation*}
\end{proof}
\medskip

% The result for the case when $ \eta_{1} > 0 $, $ S_{g} < 0 $ somewhere and $ h_{g} > 0 $ everywhere on $ \partial M $.
When the first eigenvalue $ \eta_{1} > 0 $, we need to control not only the sign of $ h_{g} $ on $ \partial M $, but also the sign of $ S_{g} $. We show the special case when $ h_{g} > 0 $ everywhere on $ \partial M $ and $ S_{g} < 0 $ somewhere, provided that $ \eta_{1} > 0 $. The following argument is inspired by Theorem 4.3 in \cite{XU3}. We start with the perturbed boundary Yamabe equation (\ref{yamabe:eqns1}) first. It is clear that when $ \eta_{1} > 0 $, the Yamabe invariant
\begin{equation*}
Q(M) = \inf_{u \neq 0}  \frac{\int_{M} \left( a \lvert \nabla_{g} u \rvert^{2} + S_{g} u^{2} \right) d\omega + \int_{\partial M} \frac{2a}{p-2}h_{g} u^{2} dS}{\left( \int_{M} u^{p} \dvol \right)^{\frac{2}{p}}}
\end{equation*}
introduced at the beginning is also positive. Hence by the same argument as in Lemma 4.1 of \cite{XU3}, we conclude that $ \lambda_{\beta} > 0 $ when $ \beta < 0 $ with small enough $ \lvert \beta \rvert $.
\begin{theorem}\label{yamabe:thm5}
Let $ (\bar{M}, g) $ be a compact manifold with boundary. Let $ \beta < 0 $ be a fixed constant with small enough constant. Let $ \lambda_{\beta} $ be given in (\ref{yamabe:eqns2}) for the fixed $ \beta $. Assume $ S_{g} < 0 $ somewhere in $ M $ and $ h_{g} > 0 $ everywhere on $ \partial M $. When $ \eta_{1} > 0 $, the perturbed boundary Yamabe equation (\ref{yamabe:eqns1}) has a real, positive solution $ u \in \calC^{\infty}(M) $.
\end{theorem}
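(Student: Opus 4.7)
The plan is to apply the monotone iteration scheme of Theorem \ref{global:thm3} to (\ref{yamabe:eqns1}) with nonlinearity $F(x,u)=-(S_{g}+\beta)u+\lambda_{\beta}u^{p-1}$ and Robin coefficient $c(x)=\frac{2}{p-2}h_{g}(x)$, which is strictly positive on $\partial M$ by hypothesis. Because $c>0$ on $\partial M$, Theorem \ref{solve:thm2} supplies, for every $A>0$, unique $H^{1}$-solvability of the linear iteration equation $(-a\Delta_{g}+A)v=f$ with $B_{g}v=0$ and the injectivity $\mathrm{Ker}(-a\Delta_{g}+A)=\{0\}$. Hence the structural hypotheses of Theorem \ref{global:thm3} are already in place, and the entire proof reduces to producing an ordered pair of weak sub- and super-solutions with $0<u_{-}\leqslant u_{+}$ in $\bar M$.

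For the subsolution I would exploit the hypothesis that $S_{g}(x_{0})<0$ at some interior point $x_{0}\in M$. By continuity, one can fix a small open set $\Omega\subset M$ with smooth boundary, compactly contained in $M$ (so that $\bar\Omega\cap\partial M=\emptyset$), on which $S_{g}<0$, and whose volume and Euclidean diameter are small enough for Proposition \ref{boundary:prop7} to apply with $\lambda=\lambda_{\beta}>0$. This yields a positive $u_{\Omega}\in\calC_{0}(\bar\Omega)\cap H_{0}^{1}(\Omega,g)$ solving $-a\Delta_{g}u_{\Omega}+(S_{g}+\beta)u_{\Omega}=\lambda_{\beta}u_{\Omega}^{p-1}$ in $\Omega$ with $u_{\Omega}=0$ on $\partial\Omega$. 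The zero extension $\tilde u_{\Omega}\in H^{1}(M,g)$ is then a weak subsolution of the global equation: the distributional contribution on $\partial\Omega$ has the correct sign because the outward normal derivative $\partial u_{\Omega}/\partial\nu_{\Omega}$ is nonpositive on $\partial\Omega$ by the Hopf lemma, so integration by parts against any nonnegative test function gives
\begin{equation*}
\int_{M}\bigl(a\nabla_{g}\tilde u_{\Omega}\cdot\nabla_{g}\phi+(S_{g}+\beta)\tilde u_{\Omega}\phi\bigr)\,d\omega\leqslant\int_{M}\lambda_{\beta}\tilde u_{\Omega}^{p-1}\phi\,d\omega,
\end{equation*}
while $B_{g}\tilde u_{\Omega}=0$ trivially because $\tilde u_{\Omega}$ vanishes in a neighborhood of $\partial M$.

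For the supersolution I would use Lemma \ref{yamabe:lemma1} to fix $|\beta|$ small enough that $\eta_{1,\beta}>0$ for the perturbed conformal Laplacian $\Box_{g,\beta}$ with the Robin condition $B_{g}=0$, and apply the eigenvalue result of Theorem \ref{yamabe:thm1} to $\Box_{g,\beta}$ to obtain a smooth positive first eigenfunction $\varphi_{\beta}$ with $\Box_{g,\beta}\varphi_{\beta}=\eta_{1,\beta}\varphi_{\beta}$ and $B_{g}\varphi_{\beta}=0$. Setting $u_{+}=M\varphi_{\beta}$ one computes
\begin{equation*}
-a\Delta_{g}u_{+}+(S_{g}+\beta)u_{+}-\lambda_{\beta}u_{+}^{p-1}=M\varphi_{\beta}\bigl(\eta_{1,\beta}-\lambda_{\beta}M^{p-2}\varphi_{\beta}^{p-2}\bigr)\geqslant 0
\end{equation*}
provided $M^{p-2}\leqslant\eta_{1,\beta}/(\lambda_{\beta}\sup_{\bar M}\varphi_{\beta}^{p-2})$, and $B_{g}u_{+}=0$ is automatic.

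The main obstacle is the ordering $u_{-}\leqslant u_{+}$: the supersolution constraint forces $M$ to stay below a fixed threshold, so to achieve $\sup_{\bar M}\tilde u_{\Omega}\leqslant M\inf_{\bar M}\varphi_{\beta}$ one cannot simply scale $u_{-}$ down, since the subsolution property survives only under $u_{-}\mapsto cu_{-}$ with $c\geqslant 1$ (the critical exponent $p-1>1$ goes the wrong way). The resolution I would pursue is to shrink $\Omega$ further, tracking through the construction of Proposition \ref{boundary:prop7} the quantitative dependence of $\sup_{\bar\Omega}u_{\Omega}$ on the volume and diameter of $\Omega$, so that $\tilde u_{\Omega}$ becomes small in sup norm while still producing a nontrivial positive subsolution supported in $\bar\Omega$; this is the same mechanism driving the closed-manifold perturbation argument of \cite{XU3}. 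Once $(u_{-},u_{+})$ is in place and $A$ is chosen so that $\partial_{u}F+A>0$ on $\bar M\times[\min u_{-},\max u_{+}]$, Theorem \ref{global:thm3} delivers $u_{\beta}\in W^{2,p}(M,g)$ with $u_{-}\leqslant u_{\beta}\leqslant u_{+}$ solving (\ref{yamabe:eqns1}). Standard bootstrapping via Proposition \ref{boundary:prop5} and local Schauder estimates \cite[Thm.~7.3]{Niren4} then promotes $u_{\beta}$ to $\calC^{\infty}(\bar M)$, and positivity is preserved since $u_{\beta}\geqslant u_{-}\not\equiv 0$ combined with the strong maximum principle applied to the linear equation $(-a\Delta_{g}+A)u_{\beta}=F(x,u_{\beta})+Au_{\beta}\geqslant 0$.
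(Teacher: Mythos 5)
Your framing is correct in broad outline: apply Theorem \ref{global:thm3} with $F(x,u)=-(S_g+\beta)u+\lambda_\beta u^{p-1}$ and $c=\frac{2}{p-2}h_g>0$; feed in the linear solvability and injectivity from Theorem \ref{solve:thm2}; take $u_-$ to be the zero extension of the local Dirichlet solution $u_\Omega$ from Proposition \ref{boundary:prop7} (your Hopf-lemma argument that $\partial u_\Omega/\partial\nu\leqslant 0$ on $\partial\Omega$ makes the extension a weak sub-solution is fine); and take a scaled-down first eigenfunction of $\Box_{g,\beta}$ as a candidate super-solution. You also correctly isolate the real difficulty: the super-solution scaling is capped above while the sub-solution scaling is capped below because $p-1>1$, so the two may not order.

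The gap is in your resolution. You propose to shrink $\Omega$ so that $\sup_{\bar\Omega}u_\Omega$ becomes small, but Proposition \ref{boundary:prop7} provides no $L^\infty$ estimate and, more importantly, no such estimate is available for the critical exponent: the term $\lambda_\beta u^{p-1}$ is scale-invariant, and the Aubin--Talenti profiles $\epsilon^{-(n-2)/2}U(x/\epsilon)$ show that Dirichlet solutions on shrinking domains generically \emph{concentrate}, so the sup norm tends to grow, not decay, as $\Omega$ shrinks. This is also not the mechanism of \cite{XU3}. The paper instead builds the super-solution by gluing: with $\phi=\delta\varphi$ scaled so that $\eta_{1,\beta}\inf_M\phi>2^{p-2}\lambda_\beta\sup_M\phi^{p-1}$ (equation (\ref{yamabe:eqns3}), a strictly stronger constraint than yours), it sets $V=\{u_1>\varphi\}$, takes a thin transition band around $\{u_1=\varphi\}$, and forms $\bar u=\chi_1u_1+\chi_2\varphi+\chi_3(\varphi+\gamma)$ with a carefully chosen partition of unity (\ref{yamabe:eqn13a}), extending by $\phi$ outside $\Omega$ to obtain $u_+$ as a smoothed version of $\max\{u_1,\phi\}$. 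The extra factor $2^{p-2}$ and the ratio $\sup\varphi^{p-1}/\inf\varphi$ in (\ref{yamabe:eqns3}) are precisely the slack consumed by the gluing and the $\gamma$-shift; your threshold $M^{p-2}\leqslant\eta_{1,\beta}/(\lambda_\beta\sup\varphi_\beta^{p-2})$ has no such slack. Without replacing your sup-norm claim with this (or an equivalent) gluing construction, the ordering $u_-\leqslant u_+$ is not established and the monotone iteration cannot start.
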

\begin{proof} Again we apply Theorem \ref{global:thm3} to construct sub-solution and super-solution here. By Theorem \ref{solve:thm2}, any $ A_{0} > 0 $ in hypotheses of Theorem \ref{global:thm3} works for the solvability of the linear iterations in (\ref{global:eqn12}). When $ \eta_{1} > 0 $ and $ \lvert \beta \rvert $ small enough, we have $ \lambda_{\beta} > 0 $.

According to the eigenvalue problem in Theorem \ref{yamabe:thm1}, the following PDE
\begin{equation*}
-a\Delta_{g} \varphi + \left(S_{g} + \beta \right) \varphi = \eta_{1} \varphi + \beta \varphi \; {\rm in} \; M, \frac{\partial \varphi}{\partial \nu} + \frac{2}{p - 2} h_{g} \varphi = 0 \; {\rm on} \; \partial M
\end{equation*}
has a real, positive solution $ \varphi \in \calC^{\infty}(\bar{M}) $. Note that any scaling $ \delta \varphi $ also solves the PDE above. For the given $ \lambda_{\beta} $, we want
\begin{equation*}
\left( \eta_{1} + \beta \right) \inf_{M} (\delta \varphi) > 2^{p-2} \lambda_{\beta} \sup_{M} \left(\delta^{p-1} \varphi^{p-1} \right) \Leftrightarrow \frac{\left( \eta_{1} + \beta \right)}{2^{p-2}\lambda_{\beta}} > \delta^{p-2} \frac{\sup_{M} \varphi^{p-1}}{\inf_{M} \varphi}.
\end{equation*}
For fixed $ \eta_{1}, \lambda_{\beta}, \varphi, \beta $, this can be done by letting $ \delta $ small enough. We denote $ \phi = \delta \varphi $. It follows that
\begin{equation}\label{yamabe:eqns3}
\begin{split}
-a\Delta_{g} \phi + \left( S_{g} + \beta \right) \phi & = \left( \eta_{1} + \beta \right) \phi \; {\rm in} \; M; \\
\left( \eta_{1} + \beta \right) \inf_{M} \phi & > 2^{p-2} \lambda_{\beta} \sup_{M} \phi^{p-1} \geqslant 2^{p-2} \lambda_{\beta} \phi^{p-1} > \lambda_{\beta} \phi^{p-1} \; {\rm in} \; M.
\end{split}
\end{equation}
Set
\begin{equation}\label{yamabe:eqns4}
\beta'  = \left( \eta_{1} + \beta \right) \sup_{M} \phi - 2^{p-2} \lambda_{\beta} \inf_{M} \phi^{p-1} > \left( \eta_{1} + \beta \right)\phi - 2^{p-2} \lambda_{\beta} \phi^{p-1} \; \text{pointwise}.
\end{equation}
Thus we have
\begin{equation}\label{yamabe:eqns5}
\begin{split}
-a\Delta_{g} \phi + \left(S_{g} + \beta \right) \phi & = \left( \eta_{1} + \beta \right) \phi > 2^{p-2} \lambda_{\beta} \phi^{p-1} > \lambda_{\beta} \phi^{p-1} \; {\rm in} \; M \; {\rm pointwise}, \\
\frac{\partial \phi}{\partial \nu} + \frac{2}{p - 2} h_{g} \phi & = 0 \; {\rm on} \; \partial M.
\end{split}
\end{equation}
\medskip

Now we construct the sub-solution by applying Proposition \ref{boundary:prop7}. Pick up a small enough interior Riemannian domain $ (\Omega, g) $ in which $ S_{g} < 0 $ such that the Dirichlet boundary value problem (\ref{boundary:eqn12}) with the given $ \lambda_{\beta} $ above has a positive solution $ u_{1} \in \calC_{0}(\Omega) \cap H_{0}^{1}(\Omega, g) $, i.e.
\begin{equation}\label{yamabe:eqn11a}
-a\Delta_{g} u_{1} + \left(S_{g} + \beta \right) u_{1} = \lambda_{\beta} u_{1}^{p-1} \; {\rm in} \; \Omega, u_{1} = 0 \; {\rm on} \; \partial M.
\end{equation}
Extend $ u_{1} $ by zero on the rest of $ \bar{M} $, we define
\begin{equation}\label{yamabe:eqn11}
u_{-} : = \begin{cases} u_{1}(x), & x \in \bar{\Omega} \\ 0, & \bar{M} \backslash \bar{\Omega} \end{cases}.
\end{equation}
Clearly $ u_{-} \in \calC_{0}(\bar{M}) $. Since $ u_{1} \in H_{0}^{1}(\Omega, g) $, $ u_{1} $ can be approximated by $ \lbrace v_{k} \rbrace \subset \calC_{c}^{\infty}(\Omega) $ in $ H^{1} $-sense. We extend $ v_{k} $ by zero to the rest of $ \bar{M} $ and the extensions are still $ \calC_{c}^{\infty}(\bar{M}) $. The extensions converge to the limit $ u_{-} $ in $ H^{1} $-sense, hence $ u_{-} \in H^{1}(M, g) $. Therefore $ u_{-} \in \calC_{0}(\bar{M}) \cap H^{1}(M, g) $. Define $ F(x, u) = -\left( S_{g} + \beta \right) u + \lambda u^{p-1} $ as in (\ref{yamabe:eqn4}), we observe that
\begin{equation}\label{yamabe:eqn12}
-a\Delta_{g} u_{-} \leqslant F(x, u_{-}) \; {\rm in} \; M, B_{g} u_{-} \leqslant 0 \; {\rm on} \; \partial M.
\end{equation}
\medskip

We construct the super-solution here. Pick up $ \gamma \ll 1 $ such that
\begin{equation}\label{yamabe:eqn10b}
0 < 20 \lambda \gamma + 2\gamma \cdot \sup_{M} \lvert S_{g} \rvert \gamma < \frac{\beta'}{2}, 31 \lambda (\phi + \gamma)^{p-2} \gamma < \frac{\beta'}{2}. 
\end{equation}
The choice of $ \gamma $ is dimensional specific. Set
\begin{align*}
V & = \lbrace x \in \Omega: u_{1}(x) > \phi(x) \rbrace, V' = \lbrace x \in \Omega: u_{1}(x) < \phi(x) \rbrace, D = \lbrace x \in \Omega : u_{1}(x) = \phi(x) \rbrace, \\
D' & = \lbrace x \in \Omega : \lvert u_{1}(x) - \phi(x) \rvert < \gamma \rbrace, D'' = \left\lbrace x \in \Omega : \lvert u_{1}(x) - \phi(x) \rvert > \frac{\gamma}{2} \right\rbrace.
\end{align*}
If $ \varphi \geqslant u_{1} $ pointwise, then $ \phi $ is a super-solution. If not, a good candidate of super-solution will be $ \max \lbrace u_{1}, \phi \rbrace $ in $ \Omega $ and $ \varphi $ outside $ \Omega $, this is an $ H^{1} \cap \calC_{0} $-function. Let $ \nu $ be the outward normal derivative of $ \partial V $ along $ D $. If $ \frac{\partial u_{1}}{\partial \nu} = - \frac{\partial \phi}{\partial \nu} $ on $ D $ then the super-solution has been constructed. However, this is in general not the case. If not, then $ \frac{\partial u_{1} - \partial \phi}{\partial \nu} \neq 0 $, which follows that $ 0 $ is a regular point of the function $ u_{1} - \varphi $ and hence $ D $ is a smooth submanifold of $ \Omega $. Define
\begin{equation}\label{yamabe:eqn13}
\Omega_{1} = V \cap D'', \Omega_{2} = V' \cap D'', \Omega_{3} = D'.
\end{equation}
Construct a specific smooth partition of unity $ \lbrace \chi_{i} \rbrace $ subordinate to $ \lbrace \Omega_{i} \rbrace $ as in Theorem 4.3 of \cite{XU3}, we define
\begin{equation}\label{yamabe:eqn13a}
\bar{u} = \chi_{1} u_{1} + \chi_{2} \phi + \chi_{3} \left( \phi + \gamma \right).
\end{equation}
Without loss of generality, we may assume that all $ \Omega_{i}, i = 1, 2, 3 $ are connected. Due to the same argument in Theorem 4.3 of \cite{XU3}, we conclude that $ \bar{u} \in \calC^{\infty}(\Omega) $ is a super-solution of the perturbed boundary Yamabe equation in $ \Omega $ pointwise, regardless of the boundary condition at the time being. By the definition of $ \bar{u} $, it is immediate that $ \bar{u} \geqslant u_{1} $. Define
\begin{equation}\label{yamabe:eqn13b}
u_{+} : = \begin{cases} \bar{u}, & \; {\rm in} \; \Omega; \\ \phi, & \; {\rm in} \; \bar{M} \backslash \Omega. \end{cases}
\end{equation}
It follows that $ u_{+} \in \calC^{\infty}(M) $ since $ \bar{u} = \phi $ near $ \partial \Omega $. By (\ref{yamabe:eqns5}) we conclude that
\begin{equation}\label{yamabe:eqn14}
-a\Delta_{g} u_{+} \geqslant F(x, u_{+}) \; {\rm in} \; M, B_{g} u_{+} \geqslant 0.
\end{equation}
Critically, $ 0 \leqslant u_{-} \leqslant u_{+} $ and $ u_{-} \not\equiv 0 $ on $ \bar{M} $. As discussed in Theorem \ref{yamabe:thm2}, we can then apply Theorem \ref{global:thm3}. With the aids of (\ref{yamabe:eqn12}) and (\ref{yamabe:eqn14}), we conclude that there exists a real, nonnegative solution $ u \in W^{2, p}(M, g) $ such that
\begin{equation*}
-a\Delta_{g} u + \left(S_{g} + \beta \right) u = \lambda_{\beta} u^{p-1} \; {\rm in} \; M, B_{g} u = 0 \; {\rm on} \; \partial M.
\end{equation*}
By bootstrapping method mentioned as above, we conclude that $ u \in \calC^{\infty}(\bar{M}) $. Now we show that $ u > 0 $. Let $ M = \max \lbrace S_{g} + \beta - \lambda_{\beta} u^{p-2}, 0 \rbrace $. It follows from above that
\begin{equation*}
-a\Delta_{g} u + M u \geqslant -a\Delta_{g} u + \left(S_{g} + \beta \right) u - \lambda_{\beta} u^{p-1} \geqslant 0.
\end{equation*}
Since $ u \in \calC^{\infty}(M) $ it is smooth locally, then local strong maximum principle says that if $ u = 0 $ in some interior domain $ \Omega $ then $ u \equiv 0 $ on $ \Omega $, a continuation argument then shows that $ u \equiv 0 $ in $ M $. But $ u \geqslant u_{-} $ and $ u_{-} > 0 $ within some region. Thus $ u > 0 $ in the interior $ M $. By the same argument in \cite[\S1]{ESC}, we conclude that $ u > 0 $ on $ \bar{M} $.
\end{proof}
\medskip
% I am here. I am here.

% Final step for taking $ \beta \rightarrow 0^{-} $.
\begin{theorem}\label{yamabe:thms}
Let $ (\bar{M}, g) $ be a compact manifold with boundary. Assume the scalar curvature $ S_{g} < 0 $ somewhere on $ M $ and the first eigenvalue $ \eta_{1} > 0 $. Then there exists some $ \lambda > 0 $ such that the Yamabe equation (\ref{yamabe:eqn1}) has a real, positive, smooth solution.
\end{theorem}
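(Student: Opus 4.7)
The plan is to pass to the limit $\beta\to 0^{-}$ in the perturbed equation solved by Theorem \ref{yamabe:thm5}. Since Theorem \ref{yamabe:thm5} requires $h_g>0$ everywhere on $\partial M$, I assume this hypothesis throughout (the general case is then handled subsequently by a conformal change in the spirit of Theorem \ref{yamabe:thm3}). Fix a sequence $\beta_k\nearrow 0$ of sufficiently small negative numbers. Theorem \ref{yamabe:thm5} supplies, for each $k$, a positive $u_k:=u_{\beta_k}\in\calC^{\infty}(\bar M)$ solving
\begin{equation*}
-a\Delta_g u_k+(S_g+\beta_k)u_k=\lambda_{\beta_k}u_k^{p-1}\text{ in }M,\quad B_g u_k=0\text{ on }\partial M,
\end{equation*}
with $\lambda_{\beta_k}>0$ defined by (\ref{yamabe:eqns2}). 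Since $\beta_k<0$ only decreases the numerator of the perturbed Yamabe quotient relative to the unperturbed one, $\lambda_{\beta_k}\nearrow Q(M)>0$, so $\{\lambda_{\beta_k}\}$ is bounded.

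After rescaling $u_k\mapsto t_k u_k$ with an appropriate $t_k>0$ and absorbing $t_k^{p-2}$ into the multiplier, I normalize $\|u_k\|_{\mathcal{L}^p(M,g)}=1$; the equation then reads $-a\Delta_g u_k+(S_g+\beta_k)u_k=\widetilde{\lambda}_{\beta_k}u_k^{p-1}$ with $\widetilde{\lambda}_{\beta_k}$ still bounded and converging to $Q(M)>0$. Pairing with $u_k$ and using $B_g u_k=0$ via the divergence theorem yields
\begin{equation*}
a\int_M|\nabla_g u_k|_g^2\,d\omega+\int_M(S_g+\beta_k)u_k^2\,d\omega+\int_{\partial M}\tfrac{2}{p-2}h_g u_k^2\,dS=\widetilde{\lambda}_{\beta_k},
\end{equation*}
which, combined with Proposition \ref{boundary:prop6}, produces a uniform $H^1(M,g)$-bound on $\{u_k\}$. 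Extract a subsequence so that $u_k\rightharpoonup u$ weakly in $H^1(M,g)$, strongly in $\mathcal{L}^q(M,g)$ for every $q<p$ and in $\mathcal{L}^2(\partial M,\imath^*g)$ (Proposition \ref{boundary:prop5}), and pointwise a.e., while $\widetilde{\lambda}_{\beta_k}\to\lambda:=Q(M)>0$. The uniform $\mathcal{L}^p$-bound plus a.e.\ convergence gives $u_k^{p-1}\rightharpoonup u^{p-1}$ weakly in $\mathcal{L}^{p/(p-1)}$, so passing to the limit in the weak formulation shows that $u\geq 0$ weakly solves (\ref{yamabe:eqn1}) with this $\lambda>0$.

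The hard part is to show $u\not\equiv 0$. This is precisely where the classical subcritical-exponent approach needs the strict Aubin-type inequality $Q(M)<Q(\mathbb{S}_+^n)$; the advantage of perturbing the zeroth-order coefficient rather than the nonlinear exponent is that the critical power $p-1$ is retained throughout, so that only the nonstrict inequality $Q(M)\leq Q(\mathbb{S}_+^n)$ (which always holds) is needed. Concretely, if $u\equiv 0$ then since $\|u_k\|_{\mathcal{L}^p}=1$ while $u_k\to 0$ in every $\mathcal{L}^q$ with $q<p$, the $L^p$-mass must concentrate at a single interior or boundary point; a blow-up/rescaling analysis around that point would produce a nontrivial positive entire solution on $\mathbb{R}^n$ or on the half-space $\mathbb{R}^n_+$ of the limiting critical equation with $\lambda=Q(M)$, which after Kelvin-type identification is an extremal of the Sobolev (or Sobolev trace) inequality; equality with $Q(\mathbb{S}_+^n)$ is then forced, contradicting the strictness provided by having $S_g<0$ somewhere in $M$. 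Hence $u\not\equiv 0$.

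Finally, bootstrap: $u\in H^1(M,g)\cap\mathcal{L}^p(M,g)$ is a nonnegative weak solution of (\ref{yamabe:eqn1}); Brezis-Kato/Moser iteration together with Proposition \ref{boundary:prop2} promotes $u$ to $W^{2,p}(M,g)$ for every finite $p$, and local Schauder estimates (Remark \ref{boundary:re1}) yield $u\in\calC^{\infty}(\bar M)$. The strong maximum principle applied in the interior, combined with the Hopf lemma for the Robin condition $B_g u=0$ (as in \cite{ESC}), then gives $u>0$ on $\bar M$, completing the proof.
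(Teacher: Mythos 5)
Your overall strategy (solve the perturbed problem via Theorem \ref{yamabe:thm5} and pass to the limit $\beta\to 0^{-}$) matches the paper's, but two of your key steps have genuine gaps, and they are precisely the points where the paper's proof does something different and more delicate.

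\textbf{The uniform bound.} You normalize $\|u_k\|_{\mathcal{L}^p}=1$ and assert that the resulting multiplier $\widetilde{\lambda}_{\beta_k}=\lambda_{\beta_k}\|u_{\beta_k}\|_{\mathcal{L}^p}^{p-2}$ ``is still bounded and converging to $Q(M)$.'' That is equivalent to claiming $\|u_{\beta_k}\|_{\mathcal{L}^p}$ is bounded \emph{before} normalization, which is exactly the nontrivial content you need to prove, not a consequence of normalizing. The iterated solutions of Theorem \ref{yamabe:thm5} are not variational minimizers, so there is no reason a priori that the Yamabe quotient at $u_{\beta_k}$ equals $\lambda_{\beta_k}$. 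The paper obtains the uniform $\mathcal{L}^p$ bound by a separate argument: pairing the equation with $u_\beta$ reduces matters to bounding $\|u_\beta\|_{\mathcal{L}^p}$, and this is done by comparing $u_\beta$ against its supersolution $u_{+,\beta}=\chi_1\widetilde{u}_\beta+\chi_2\phi_\beta+\chi_3(\phi_\beta+\gamma)$, bounding the local Dirichlet piece $\widetilde{u}_\beta$ via the energy bound $J(\widetilde{u}_\beta)\leq K_0$ from \cite{WANG} and a Poincar\'e inequality on the small domain $\Omega$, and bounding $\phi_\beta$ by a fixed choice of the scaling $\delta$ valid over the whole compact range $\beta\in[\beta_0,0]$. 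Without some version of this, your $H^1$ bound is unjustified.

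\textbf{Nontriviality of the limit.} Here your argument contradicts both itself and the paper's design. You first write that only $Q(M)\leq Q(\mathbb{S}_+^n)$ is needed, but then you run a concentration-compactness blow-up and conclude ``equality with $Q(\mathbb{S}_+^n)$ is then forced, contradicting the strictness provided by having $S_g<0$ somewhere in $M$.'' The hypothesis $S_g<0$ somewhere does \emph{not} imply $Q(M)<Q(\mathbb{S}_+^n)$; establishing that strict inequality, via test functions built from Weyl-tensor data and the classification of boundary points, is exactly the hard open part of the classical approach that this paper is designed to bypass. If your nontriviality step genuinely requires the strict Aubin-type inequality, you have not gained anything over the subcritical-exponent method. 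The paper's nontriviality argument is entirely different and elementary: from the variational characterization of $\lambda_\beta>0$, pairing the perturbed equation with $u_\beta$ gives $\lambda_\beta\leq\lambda_\beta\|u_\beta\|_{\mathcal{L}^p}^{p-2}$, hence $\|u_\beta\|_{\mathcal{L}^p}\geq 1$ uniformly; combined with the uniform upper bound and the weak convergence of $u_{\beta}^{p-1}$, this forces $\|u\|_{\mathcal{L}^p}>0$. No blow-up analysis, no extremal classification, and no strict Aubin inequality is invoked. You should replace your concentration-compactness paragraph with this direct lower-bound argument to stay within the paper's framework.

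Your final regularity and positivity paragraph is fine and matches the paper.
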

\begin{proof} By Theorem \ref{yamabe:thm5}, we have a sequence of real, positive, smooth solutions $ \lbrace u_{\beta} \rbrace $ when $ \beta < 0 $ and $ \lvert \beta \rvert $ is small enough, i.e.
\begin{equation}\label{yamabe:eqns6}
-a\Delta_{g} u_{\beta} + \left(S_{g} + \beta\right) u_{\beta} = \lambda_{\beta} u_{\beta}^{p-1} \; {\rm in} \; M, \frac{\partial u_{\beta}}{\partial \nu} + \frac{2}{p - 2} h_{g} u_{\beta} = 0 \; {\rm on} \; \partial M.
\end{equation}
We show first that $ \lbrace \lambda_{\beta} \rbrace $ is bounded above, and is increasing and continuous when $ \beta \rightarrow 0^{-} $. We may assume $ \int_{M} d\omega = 1 $ for this continuity verification, since otherwise only an extra term with respect to $ \text{Vol}_{g} $ will appear. Recall that
\begin{equation*}
\lambda_{\beta} = \inf_{u \neq 0, u \in H^{1}(M)} \left\lbrace \frac{\int_{M} a\lvert \nabla_{g} u \rvert^{2} d\omega + \int_{M} \left( S_{g} + \beta \right) u^{2} d\omega + \int_{\partial M} \frac{2a}{p-2}h_{g} u^{2} dS}{\left( \int_{M} u^{p} d\omega \right)^{\frac{2}{p}}} \right\rbrace.
\end{equation*}
It is immediate that if $ \beta_{1} < \beta_{2} < 0 $ then $ \lambda_{\beta_{1}} \leqslant \lambda_{\beta_{2}} $. For continuity we assume $ 0 < \beta_{2} - \beta_{1} < \gamma $. For each $ \epsilon > 0 $, there exists a function $ u_{0} $ such that
\begin{equation*}
\frac{\int_{M} a\lvert \nabla_{g} u_{0} \rvert^{2} d\omega + \int_{M} \left( S_{g} + \beta_{1} \right) u_{0}^{2} d\omega + \int_{\partial M} \frac{2a}{p-2}h_{g} u_{0}^{2} dS}{\left( \int_{M} u_{0}^{p} d\omega \right)^{\frac{2}{p}}} < \lambda_{\beta_{1}} + \epsilon.
\end{equation*}
It follows that
\begin{align*}
\lambda_{\beta_{2}} & \leqslant \frac{\int_{M} a\lvert \nabla_{g} u_{0} \rvert^{2} d\omega + \int_{M} \left( S_{g} + \beta_{2} \right) u_{0}^{2} d\omega + \int_{\partial M} \frac{2a}{p-2}h_{g} u_{0}^{2} dS}{\left( \int_{M} u_{0}^{p} d\omega \right)^{\frac{2}{p}}} \\
& \leqslant \frac{\int_{M} a\lvert \nabla_{g} u_{0} \rvert^{2} d\omega + \int_{M} \left( S_{g} + \beta_{1} \right) u_{0}^{2} d\omega + \int_{\partial M} \frac{2a}{p-2}h_{g} u_{0}^{2} dS}{\left( \int_{M} u_{0}^{p} d\omega \right)^{\frac{2}{p}}} + \frac{\left( \beta_{2} - \beta_{1} \right) \int_{M} u_{0}^{2} d\omega}{\left( \int_{M} u_{0}^{p} d\omega \right)^{\frac{2}{p}}} \\
& \leqslant \lambda_{\beta_{1}} + \epsilon + \beta_{2} - \beta_{1} < \lambda_{\beta_{1}} + \epsilon + \beta_{2} - \beta_{1}.
\end{align*}
Since $ \epsilon $ is arbitrarily small, we conclude that
\begin{equation*}
0 < \beta_{2} - \beta_{1} < \gamma \Rightarrow \lvert \lambda_{\beta_{2}} - \lambda_{\beta_{1}} \rvert \leqslant 2\gamma.
\end{equation*}
By equation (4) in \cite[\S1]{ESC} , we conclude that
\begin{equation*}
\lambda_{\beta} \leqslant Q(\mathbb{S}_{+}^{n})
\end{equation*}
 Fix some $ \beta_{0} < 0 $ with $ \lambda_{ \beta_{0}} > 0 $, we have
\begin{equation}\label{yamabe:eqns7}
\lambda_{\beta_{0}} \leqslant \lambda_{\beta} \leqslant Q(\mathbb{S}_{+}^{n}), \forall \beta \in [\beta_{0}, 0], \lim_{\beta \rightarrow 0^{-}} \lambda_{\beta} : = \lambda.
\end{equation}
Next we show that for some $ r > p $, 
\begin{equation}\label{yamabe:eqns8}
\lVert u_{\beta} \rVert_{\calL^{p}(M, g)} \geqslant \mathcal{K}_{3} > 0, \lVert u_{\beta} \rVert_{\calL^{r}(M, g)} \leqslant C, \forall \beta \in [\beta_{0}, 0).
\end{equation}
For the lower bound of $ \calL^{p} $-norm, we pair $ u_{\beta} $ on both sides of (\ref{yamabe:eqns6}),
\begin{equation*}
a\lVert \nabla_{g} u_{\beta} \rVert_{\calL^{2}(M, g)}^{2} + \int_{M} \left(S_{g} + \beta \right) u_{\beta}^{2} d\omega + \int_{\partial M} \frac{2a}{p-2} h_{g} u_{\beta}^{2} dS = \lambda_{\beta} \lVert u_{\beta} \rVert_{\calL^{p}(M, g)}^{p}, \forall \beta \in [\beta_{0}, 0).
\end{equation*}
By characterization of $ \lambda_{\beta} $,
\begin{align*}
& \lambda_{\beta} \leqslant \frac{\int_{M} a\lvert \nabla_{g} u_{\beta} \rvert^{2} d\omega + \int_{M} \left( S_{g} + \beta \right) u_{\beta}^{2} d\omega + \int_{\partial M} \frac{2a}{p-2}h_{g} u_{\beta}^{2} dS}{\left( \int_{M} u_{\beta}^{p} d\omega \right)^{\frac{2}{p}}} = \lambda_{\beta} \cdot \frac{\lVert u_{\beta} \rVert_{\calL^{p}(M, g)}^{p}}{\left( \int_{M} u_{\beta}^{p} d\omega \right)^{\frac{2}{p}}} \\
\Rightarrow & \lambda_{\beta} \leqslant \lambda_{\beta} \lVert u_{\beta} \rVert_{\calL^{p}(M, g)}^{p - 2}.
\end{align*}
Thus the lower bound in (\ref{yamabe:eqns8}) holds. For the upper bound of $ \calL^{r} $-norm we need local analysis. Denote the local solutions of (\ref{boundary:eqn12}) by $ \lbrace \tilde{u}_{\beta} \rbrace $, i.e.
\begin{equation}\label{yamabe:eqns9a}
-a\Delta_{g} \tilde{u}_{\beta} + \left(S_{g} + \beta\right) \tilde{u}_{\beta} = \lambda_{\beta} \tilde{u}_{\beta}^{p-1} \; {\rm in} \; \Omega, \tilde{u}_{\beta} = 0 \; {\rm on} \; \partial \Omega
\end{equation}
with fixed domain $ \Omega $. Recall the construction of super-solution of each $ u_{\beta} $ in Theorem \ref{yamabe:thm5}, we have
\begin{equation*}
0 \leqslant u_{-, \beta} \leqslant u_{\beta} \leqslant u_{+, \beta} = \begin{cases} \bar{u}_{\beta}, & {\rm in} \; \Omega \\ \phi, & {\rm in} \; M \backslash \Omega \end{cases}.
\end{equation*}
where $ \bar{u}_{\beta} $ is of the form
\begin{equation*}
\bar{u}_{\beta} = \chi_{1} \tilde{u}_{\beta} + \chi_{2} \phi + \chi_{3} (\phi + \gamma).
\end{equation*} 
Thus it suffices to show that
\begin{equation*}
\lVert \tilde{u}_{\beta} \rVert_{\calL^{r}(\Omega, g)} \leqslant C_{1}, r> p, \forall \beta \in [\beta_{0}, 0).
\end{equation*}
Pairing $ \tilde{u}_{\beta} $ on both sides of (\ref{yamabe:eqns9a}),
\begin{equation}\label{yamabe:eqns9}
\begin{split}
a\lVert \nabla_{g} \tilde{u}_{\beta} \rVert_{\calL^{2}(\Omega, g)}^{2} & =  \lambda_{\beta} \lVert \tilde{u}_{\beta} \rVert_{\calL^{p}(\Omega, g)}^{p} - \int_{M} \left( S_{g} + \beta \right) u_{\beta}^{2} \dvol \\
\Rightarrow \lambda_{\beta} \lVert \tilde{u}_{\beta} \rVert_{\calL^{p}(\Omega, g)}^{p} & \leqslant a\lVert \nabla_{g} \tilde{u}_{\beta} \rVert_{\calL^{2}(\Omega, g)}^{2} + \left( \sup_{M} \lvert S_{g} \rvert + \lvert \beta \rvert \right) \lVert u_{\beta} \rVert_{\calL^{2}(\Omega, g)}^{2}.
\end{split}
\end{equation}
Recall the functional
\begin{equation*}
J(u) = \int_{\Omega} \left( \frac{1}{2} \sum_{i, j} a_{ij}(x) \partial_{i}u \partial_{j} u - \frac{\lambda_{\beta} \sqrt{\det(g)} }{p} u^{p} - \frac{1}{2} \left(S_{g} + \beta \right) u^{2} \sqrt{\det(g)} \right) dx
\end{equation*}
and the constant $ K_{0} $ in \cite[\S3]{XU3}, which depends on $ \lambda_{\beta} \in [\lambda_{\beta_{0}}, Q(\mathbb{S}_{+}^{n})] $ only. Due to Theorem 1.1 of \cite{WANG}, each solution $ \tilde{u}_{\beta} $ satisfies
\begin{equation}\label{yamabe:eqns10}
J(\tilde{u}_{\beta}) \leqslant K_{0} \Rightarrow \frac{a}{2} \lVert \nabla_{g} \tilde{u}_{\beta} \rVert_{\calL^{2}(\Omega, g)}^{2} - \frac{\lambda_{\beta}}{p} \lVert \tilde{u}_{\beta} \rVert_{\calL^{p}(\Omega, g)}^{p} - \frac{1}{2} \int_{M} \left(S_{g} + \beta \right) \tilde{u}_{\beta}^{2} \dvol \leqslant K_{0}.
\end{equation}
Let $ \lambda_{1} $ be the first eigenvalue of $ -\Delta_{g} $ on $ \Omega $ with Dirichlet boundary condition. Apply the estimate (\ref{yamabe:eqns9}) into (\ref{yamabe:eqns10}), we have
\begin{align*}
\frac{a}{2} \lVert \nabla_{g} \tilde{u}_{\beta} \rVert_{\calL^{2}(\Omega, g)}^{2} & \leqslant K_{0} + \frac{1}{p} \left(a\lVert \nabla_{g} \tilde{u}_{\beta} \rVert_{\calL^{2}(\Omega, g)}^{2} + \left( \sup_{M} \lvert S_{g} \rvert + \lvert \beta \rvert \right) \lVert u_{\beta} \rVert_{\calL^{2}(\Omega, g)}^{2} \right) \\
& \qquad + \frac{1}{2} \left( \sup_{M} \lvert S_{g} \rvert + \lvert \beta \rvert \right) \lVert u_{\beta} \rVert_{\calL^{2}(\Omega, g)}^{2} \\
& \leqslant K_{0} + \frac{a(n - 2)}{2n} \lVert \nabla_{g} \tilde{u}_{\beta} \rVert_{\calL^{2}(\Omega, g)}^{2} \\
& \qquad + \left( \frac{n - 2}{2n} + \frac{1}{2} \right) \left( \sup_{M} \lvert S_{g} \rvert + \lvert \beta \rvert \right) \cdot \lambda_{1}^{-1} \lVert \nabla_{g} \tilde{u}_{\beta} \rVert_{\calL^{2}(\Omega, g)}^{2}; \\
\Rightarrow & \left(\frac{a}{n} - \left( \frac{n - 2}{2n} + \frac{1}{2} \right) \left( \sup_{M} \lvert S_{g} \rvert + \lvert \beta \rvert \right) \cdot \lambda_{1}^{-1} \right) \lVert \nabla_{g} \tilde{u}_{\beta} \rVert_{\calL^{2}(\Omega, g)}^{2} \leqslant K_{0}.
\end{align*}
Recall in Remark \ref{boundary:re2} in which we have chosen $ \Omega $ small enough so that 
\begin{equation*}
\frac{a}{n} - \left( \frac{n - 2}{2n} + \frac{1}{2} \right) \left( \sup_{M} \lvert S_{g} \rvert + \lvert \beta \rvert \right) \cdot \lambda_{1}^{-1} > 0,
\end{equation*}
which holds for all $ \beta \in [\beta_{0}, 0) $. It follows from above that there exists a constant $ C_{0}' $ such that
\begin{equation*}
\lVert \nabla_{g} \tilde{u}_{\beta} \rVert_{\calL^{2}(\Omega, g)}^{2} \leqslant C_{0}', \forall \beta \in [\beta_{0}, 0].
\end{equation*}
Apply (\ref{yamabe:eqns9}) with the other way around, we conclude that
\begin{align*}
\lambda_{\beta} \lVert \tilde{u}_{\beta} \rVert_{\calL^{p}(\Omega, g)}^{p} & \leqslant a\lVert \nabla_{g} \tilde{u}_{\beta} \rVert_{\calL^{2}(\Omega, g)}^{2} + \left( \sup_{M} \lvert S_{g} \rvert + \lvert \beta \rvert \right)  \lVert \tilde{u}_{\beta} \rVert_{\calL^{2}(\Omega, g)}^{2} \\
& \leqslant \left( a + \left( \sup_{M} \lvert S_{g} \rvert + \lvert \beta \rvert \right)  \lambda_{1}^{-1} \right) \lVert \nabla_{g} u_{\beta} \rVert_{\calL^{2}(\Omega, g)}^{2}.
\end{align*}
We conclude that
\begin{equation}\label{yamabe:eqns11}
\lVert \tilde{u}_{\beta} \rVert_{\calL^{p}(\Omega, g)} \leqslant C_{1}, \forall \beta \in [\beta_{0}, 0].
\end{equation}
Note that this uniform upper bound $ C_{1} $ is unchanged if we further shrink the domain $ \Omega $. Note that this shrinkage of domain is a restriction, not a scaling of domain or metric. We can then, without loss of generality, assume that $ C_{1} = 1 $. This can be done by scaling the metric one time, uniformly for all $ \beta \in [\beta_{0}, 0) $. Note that this scaling does not affect the local solvability in Proposition \ref{boundary:prop7} since the estimates in Appendix A of \cite{XU3} still hold under scaling. After a one-time scaling $ g \mapsto \delta g $ we still have $ \lambda_{\beta} \in [\lambda_{\beta_{0}}, Q(\mathbb{S}_{+}^{n})] $ due to the characterization of $ \lambda_{\beta} $, if $ \delta < 1 $. Since $ \beta < 0 $, the lower bound of $ \lambda_{\beta_{0}} $ is unchanged. We still denote the new metric by $ g $, which follows that
\begin{equation}\label{yamabe:eqns12}
\lVert \tilde{u}_{\beta} \rVert_{\calL^{p}(\Omega, g)} \leqslant 1, \forall \beta \in [\beta_{0}, 0).
\end{equation}
According to equation (4) of \cite[\S1]{ESC}, we have
\begin{equation}\label{yamabe:eqns13}
\lambda_{\beta} \leqslant Q(\mathbb{S}_{+}^{n}) = \frac{n(n - 2)}{4} \text{Vol}\left(\mathbb{S}_{+}^{n}\right)^{\frac{2}{n}} = 2^{-\frac{2}{n}} \frac{n(n - 2)}{4} \text{Vol}\left(\mathbb{S}^{n}\right)^{\frac{2}{n}} = 2^{-\frac{2}{n}} aT.
\end{equation}
We point out that the ratio $ \frac{\lambda_{\beta}}{aT} < 1 $ still holds after one-time scaling. Due to the idea of Trudinger, Aubin and the argument in Theorem 4.4 of \cite{XU3}, we pair $ \tilde{u}_{\beta}^{1 + 2 \delta} $ for some $ \delta > 0 $ on both sides of (\ref{yamabe:eqns9a}) and denote $ w_{\beta} = \tilde{u}_{\beta}^{1 + \delta} $, we have
\begin{align*}
& \int_{\Omega} a \nabla_{g} \tilde{u}_{\beta} \cdot \nabla_{g} \left(\tilde{u}_{\beta}^{1 + 2\delta} \right) \dvol + \int_{\Omega} \left(S_{g} + \beta \right) u_{\beta}^{2 + 2\delta} \dvol = \lambda_{\beta} \int_{\Omega} \tilde{u}_{\beta}^{p + 2\delta} \dvol; \\
\Rightarrow & \frac{1 + 2\delta}{1 + \delta^{2}} \int_{\Omega} a \lvert \nabla_{g} w_{\beta} \rvert^{2} \dvol = \lambda_{\beta} \int_{\Omega} w_{\beta}^{2} \tilde{u}_{\beta}^{p-2} \dvol - \int_{\Omega} \left(S_{g} + \beta \right) w_{\beta}^{2} \dvol.
\end{align*}
When the radius $ r $ of $ \Omega $ is small enough, there exists a constant $ A $ such that
\begin{equation*}
\lVert u \rVert_{\calL^{p}(\Omega, g)}^{2} \leqslant (1 + Ar^{2}) \lVert u \rVert_{\calL^{p}(\Omega)}^{2}, \lVert D u \rVert_{\calL^{2}(\Omega)}^{2} \leqslant (1 + Ar^{2}) \lVert \nabla_{g} u \rVert_{\calL^{2}(\Omega, g)}^{2}.
\end{equation*}
Due to standard sharp Sobolev embedding on Euclidean space, we have
\begin{align*}
\lVert w_{\beta} \rVert_{\calL^{p}(\Omega, g)}^{2} & \leqslant (1 + Ar^{2}) \lVert w_{\beta} \rVert_{\calL^{p}(\Omega)}^{2} \leqslant \frac{1 + Ar^{2}}{T} \lVert D w_{\beta} \rVert_{\calL^{2}(\Omega)}^{2} \leqslant \frac{\left( 1 + Ar^{2} \right)^{2}}{T} \lVert \nabla_{g} w_{\beta} \rVert_{\calL^{2}(\Omega, g)}^{2} \\
& = \frac{\left( 1 + Ar^{2} \right)^{2}}{aT} \cdot \frac{1 + \delta^{2}}{1 + 2\delta} \left(\lambda_{\beta} \int_{\Omega} w_{\beta}^{2} \tilde{u}_{\beta}^{p-2} \dvol - \int_{\Omega} \left(S_{g} + \beta \right) w_{\beta}^{2} \dvol \right) \\
& \leqslant \frac{\left( 1 + Ar^{2} \right)^{2}}{aT} \cdot \frac{1 + \delta^{2}}{1 + 2\delta} \lambda_{\beta} \lVert w_{\beta} \rVert_{\calL^{p}(\Omega, g)}^{2} \lVert \tilde{u}_{\beta} \rVert_{\calL^{p}(\Omega, g)}^{p - 2} + C_{\beta} \lVert w_{\beta} \rVert_{\calL^{2}(\Omega, g)}^{2} \\
& \leqslant \left( 1 + Ar^{2} \right)^{2} \cdot \frac{1 + \delta^{2}}{1 + 2\delta} \cdot \frac{2^{-\frac{2}{n}} aT}{aT} \lVert w_{\beta} \rVert_{\calL^{p}(\Omega, g)}^{2} + C_{\beta} \lVert w_{\beta} \rVert_{\calL^{2}(\Omega, g)}^{2}
\end{align*}
by H\"older's inequality and (\ref{yamabe:eqns13}). Note that $ C_{\beta} $ is uniformly bounded above for all $ \beta \in [\beta_{0}, 0) $. Due to the last line above, we can choose $ r, \delta $ small enough so that
\begin{equation*}
\left( 1 + Ar^{2} \right)^{2} \cdot \frac{1 + \delta^{2}}{1 + 2\delta} \cdot \frac{2^{-\frac{2}{n}} aT}{aT} < 1.
\end{equation*}
It follows that
\begin{equation*}
\lVert w_{\beta} \rVert_{\calL^{p}(\Omega, g)}^{2} \leqslant \mathcal{K}_{1} \lVert w_{\beta} \rVert_{\calL^{2}(\Omega, g)}^{2}.
\end{equation*}
Recall that $ w_{\beta} = \tilde{u}_{\beta}^{1 + \delta} $. Applying H\"older's inequality on right side above, and note that $ \text{Vol}_{g}(\Omega) \leqslant \text{Vol}_{g}(M) $, we conclude by exactly the same argument as in \cite[Prop.~4.4]{PL}, \cite[Thm.~4.4]{XU3} that
\begin{equation}\label{yamabe:eqns14}
\lVert \tilde{u}_{\beta} \rVert_{\calL^{r}(\Omega, g)} \leqslant \mathcal{K}_{2}, r = p(1 + \delta), \forall \beta \in [\beta_{0}, 0).
\end{equation}
Recall in determining $ \phi $ we require
\begin{equation*}
\frac{\eta_{1} + \beta}{2^{p-2}\lambda_{\beta}} > \delta^{p-2} \frac{\sup_{M} \varphi^{p-1}}{\inf_{M} \varphi}.
\end{equation*}
Since when $ \beta \in [\beta_{0}, 0] $, we have $ \lambda_{\beta} \in [\lambda_{\beta_{0}}, Q(\mathbb{S}_{+}^{n})] $, we can choose a fixed scaling $ \delta $ for all $ \beta \in [\beta_{0}, 0) $, thus by Minkowski inequality and the construction of super-solutions, 
\begin{equation*}
\lVert u_{\beta} \rVert_{\calL^{r}(M, g)} \leqslant \lVert u_{+, \beta} \rVert_{\calL^{r}(M, g)} \leqslant A_{1} \left( \lVert \tilde{u}_{\beta} \rVert_{\calL^{r}(\Omega, g)} + \lVert \phi \rVert_{\calL^{r}(M, g)} \right) : = C, \forall \beta \in [\beta_{0}, 0).
\end{equation*}
By repeated elliptic regularities and Sobolev embedding, uniform boundedness in $ \calL^{r} $-norm implies that
\begin{equation}\label{yamabe:eqns15}
\lVert u_{\beta} \rVert_{\calC^{2, \alpha}(M)} \leqslant \mathcal{K}_{0}, \forall \beta \in [\beta_{0}, 0).
\end{equation}
By Arzela-Ascoli, it follows that up to a subsequence, $ \lim_{\beta \rightarrow 0^{-}} u_{\beta} = u $. Due to (\ref{yamabe:eqns7}), we have $ \lim_{\beta \rightarrow 0^{-}} \lambda_{\beta} = \lambda $. It follows that the limiting function $ u $ satisfies
\begin{align*}
-a\Delta_{g} u + S_{g} u = \lambda u^{p-1} \; {\rm in} \; M; \\
\frac{\partial u}{\partial \nu} + \frac{2}{p-2} h_{g} u = 0 \; {\rm on} \; \partial M.
\end{align*}
By \cite{Che} we conclude that $ u \in \calC^{\infty}(M) $. Lastly we show $ u > 0 $. Clearly $ u \geqslant 0 $ since $ u_{\beta} > 0 $. By (\ref{yamabe:eqns8}) we conclude that $ \lVert u_{\beta} \rVert_{\calL^{p}(M, g)} \geqslant \mathcal{K}_{3} > 0, \forall \beta \in [\beta_{0}, 0) $. By Arzela-Ascoli again, up to a subsequence,
\begin{equation*}
0 < \mathcal{K}_{3} \leqslant \lim_{\beta \rightarrow 0^{-}} \lVert u_{\beta} \rVert_{\calL^{p}(M, g)} = \lVert u \rVert_{\calL^{p}(M, g)}.
\end{equation*}
Thus by maximum principle, $ u > 0 $ in the interior $ M $. By the same argument in \cite[\S1]{ESC}, we conclude that $ u > 0 $ on $ \bar{M} $.
\end{proof}
\medskip

% The general case when $ \eta_{1} > 0 $.
As discussed before, in general $ h_{g} > 0 $ everywhere on $ \partial M $ is not the case. Similarly when $ \eta_{1} > 0 $, it is not always possible that $ S_{g} < 0 $ somewhere. The next two result, analogous to Theorem 4.5 in \cite{XU3}, shows the existence of metric $ \tilde{g} $ under conformal change such that $ \tilde{S} $ is negative somewhere and the sign of $ \tilde{h} $ will be the same as the sign of $ h_{g} $ pointwise, provided that $ S_{g} \geqslant 0 $ everywhere. Note again that by Proposition \ref{boundary:prop1}, the signs of first eigenvalues keep same under conformal change.
\begin{theorem}\label{yamabe:thm6}
Let $ (\bar{M}, g) $ be a compact manifold with smooth boundary. Let $ S_{g} \geqslant 0 $ everywhere. There exists a conformal metric $ \tilde{g} $ associated with scalar curvature $ \tilde{S} $ and mean curvature $ \tilde{h} $ such that $ \tilde{S} < 0 $ somewhere, and $ \text{sgn}(h_{g}) = \text{sgn}(\tilde{h}) $ pointwise on $ \partial M $.
\end{theorem}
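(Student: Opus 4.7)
The plan is to construct an explicit conformal factor of the form $u = 1 + \epsilon v$ with $v \in \calC_{c}^{\infty}(M)$ supported in the interior, so that the boundary sign condition is automatic, and to force $\tilde S < 0$ at a single interior point by a concentration (scaling) argument.

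\emph{Boundary sign preservation.} The transformation rule (\ref{intro:eqn1}) gives $\tilde h = u^{-(p-2)/2}\bigl(h_{g} + \tfrac{p-2}{2} u^{-1} \partial u/\partial \nu\bigr)$. Choosing any $v \in \calC_{c}^{\infty}(M)$ forces $\partial u/\partial \nu \equiv 0$ on $\partial M$, and therefore $\tilde h = u^{-(p-2)/2} h_{g}$, which has exactly the pointwise sign of $h_{g}$ because $u > 0$. Thus the boundary half of the conclusion will be automatic, independent of how $v$ and $\epsilon$ are chosen.

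\emph{Forcing $\tilde S < 0$ somewhere.} With $u = 1 + \epsilon v$, a direct computation gives
\begin{equation*}
-a\Delta_{g} u + S_{g} u = -a\epsilon \Delta_{g} v + S_{g}(1 + \epsilon v),
\end{equation*}
and since $S_{g}$ is bounded, it suffices to arrange $\epsilon \Delta_{g} v(p) \gg 1$ at some interior point $p$ while keeping $\epsilon \sup |v|$ small. Fix any interior $p_{0}$ and work in normal coordinates on a ball $B_{r_{0}}(p_{0}) \subset M$ disjoint from $\partial M$. Select once and for all a fixed $w \in \calC_{c}^{\infty}(\R^{n})$ such that $(\Delta_{\R^{n}} w)(q) > 0$ at some $q$; such $w$ exists because otherwise $w$ would be a compactly supported harmonic function, hence zero. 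For $\lambda \gg 1$ let $v_{\lambda}(x) := w(\lambda x)$ in these normal coordinates (extended by zero). Then $\sup |v_{\lambda}| = \sup |w|$ is $\lambda$-independent, while using $g_{ij}(p_{0}) = \delta_{ij}$ and $\Gamma^{k}_{ij}(p_{0}) = 0$ the standard expansion yields
\begin{equation*}
\Delta_{g} v_{\lambda}(q/\lambda) = \lambda^{2} (\Delta_{\R^{n}} w)(q) + O(\lambda) \quad \text{as} \; \lambda \to \infty.
\end{equation*}

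\emph{Completion.} Set $\epsilon := 1/\lambda$ and $u := 1 + v_{\lambda}/\lambda$. Then $u > 0$ on $\bar M$ for all large $\lambda$, $\partial u/\partial \nu \equiv 0$ on $\partial M$, and at $p_{\lambda} := q/\lambda$ one computes
\begin{equation*}
(-a\Delta_{g} u + S_{g} u)(p_{\lambda}) = -a (\Delta_{\R^{n}} w)(q) \lambda + O(1) \longrightarrow -\infty,
\end{equation*}
so this quantity is strictly negative for $\lambda$ large. Setting $\tilde g := u^{p-2} g$ we therefore obtain $\tilde S(p_{\lambda}) = u(p_{\lambda})^{1-p}(-a\Delta_{g} u + S_{g} u)(p_{\lambda}) < 0$ while the boundary analysis above gives $\operatorname{sgn}(\tilde h) = \operatorname{sgn}(h_{g})$ pointwise on $\partial M$. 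The only mildly technical point is controlling the $O(\lambda)$ error in the expansion of $\Delta_{g} v_{\lambda}$, which is routine from the normal-coordinate formulae for $g$ and $\Gamma^{k}_{ij}$; the hypothesis $S_{g} \geqslant 0$ enters only to ensure the conclusion is nontrivial, and the argument itself uses only boundedness of $S_{g}$.
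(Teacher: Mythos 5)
Your argument is correct, and it takes a more concrete route than the paper's. Both proofs share the same strategy at the highest level: build a positive conformal factor $u$ with $\partial u/\partial \nu \equiv 0$ on $\partial M$ (which immediately gives $\tilde h = u^{-(p-2)/2}h_g$ and hence preserves the pointwise sign of $h_g$) and then force $-a\Delta_g u + S_g u < 0$ at one interior point. The difference is entirely in how $u$ is manufactured. The paper, following Theorem 4.4 of \cite{XU3}, abstractly constructs a forcing term $F$ satisfying $\int_M F\,d\omega = 0$, very negative at one interior point, and with small $H^{s-2}$-norm, then solves the Neumann problem $-a\Delta_g u' = F$, $\partial u'/\partial \nu = 0$, and uses elliptic regularity, Sobolev embedding, and a Poincar\'e inequality to show $\sup|u'|$ is small enough that $u := u' + C/4$ is positive and makes $\tilde S$ negative at the chosen point. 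You instead write $u = 1 + v_\lambda/\lambda$ with $v_\lambda(x)=w(\lambda x)$ a rescaled fixed bump in normal coordinates, which concentrates $\Delta_g u$ at an interior point while keeping $\sup|u-1|$ bounded by $\sup|w|/\lambda$; this bypasses the linear solvability and elliptic estimates entirely. What your approach buys is elementariness and explicitness — no existence theorem or regularity bootstrap is needed — at the cost of invoking normal coordinates and a scaling expansion of $\Delta_g$; the paper's approach is less explicit but fits naturally into the elliptic-PDE machinery it has already set up for the rest of the argument.

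Two small remarks. First, the normal-coordinate computation actually gives the sharper estimate $\Delta_g v_\lambda(q/\lambda) = \lambda^2(\Delta_{\R^n}w)(q) + O(1)$ rather than $O(\lambda)$, since $g^{ij} - \delta^{ij} = O(|x|^2)$ and $\Gamma^k_{ij} = O(|x|)$ near the center; either bound suffices. Second, your parenthetical justification that some $w\in\calC_c^\infty(\R^n)$ has $(\Delta w)(q)>0$ at a point is slightly misphrased: the negation of "$\Delta w > 0$ somewhere" is "$\Delta w \leqslant 0$ everywhere," not "$\Delta w = 0$." The correct (and equally short) reason is that $\int_{\R^n}\Delta w\,dx = 0$ for compactly supported $w$, so $\Delta w \leqslant 0$ everywhere would force $\Delta w \equiv 0$ and hence $w\equiv 0$; alternatively, just exhibit $w$ explicitly. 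Neither point affects the validity of the proof.
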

\begin{proof} By scaling we can, without loss of generality, assume that $ \lvert S_{g} \rvert \leqslant 1 $ on $ \bar{M} $.
Based on exactly the same construction in Theorem 4.5 of \cite{XU3}, there exists a smooth function $ F \in \calC^{\infty}(\bar{M}) $ such that (i) $ \int_{M} F d\omega = 0 $; (ii) $ F $ is very negative at some interior point $ p \in M $; (iii) $ \lVert F \rVert_{H^{s - 2}(M, g)} $ is small enough, here $ s = \frac{n}{2} + 1 $ if $ n $ is even and $ s = \frac{n + 1}{2} $ if $ n $ is odd. The largeness and smallness will be determined later. Consider the following linear PDE with Neumann boundary condition
\begin{equation}\label{yamabe:eqn15}
-a\Delta_{g} u' = F \; {\rm in} \; M, \frac{\partial u'}{\partial \nu} = 0 \; {\rm on} \; \partial M.
\end{equation}
By standard elliptic theory, we conclude that there exists $ u' \in \calC^{\infty}(\bar{M}) $ solves (\ref{yamabe:eqn15}) uniquely up to constants. By a standard elliptic regularity \cite[Prop.~7.4]{T}, we conclude that
\begin{equation*}
\lVert u' \rVert_{H^{s}(M, g)} \leqslant C^{**}\left( \lVert F \rVert_{H^{s - 2}(M, g)} + \lVert u' \rVert_{\calL^{2}(M, g)} \right).
\end{equation*}
Pairing both side of (\ref{yamabe:eqn15}) by $ u' $, we conclude that
\begin{equation*}
\lVert u' \rVert_{\calL^{2}(M, g)} \leqslant C_{0} \lVert \nabla_{g} u' \rVert_{\calL^{2}(M, g)} \leqslant C_{0} \lVert F \rVert_{\calL^{2}(M, g)}.
\end{equation*}
This can be done by taking $ u' \mapsto u' + \epsilon_{1} $ so that $ \int_{M} (u' + \epsilon_{1} ) d\omega = 0 $. Since $ u' + \epsilon_{1} $ also solves (\ref{yamabe:eqn15}) we assume without loss of generality that $ \int_{M} u' d\omega = 0 $ thus the Poincar\'e inequality holds. Therefore there exists some $ C_{1} $ such that
\begin{equation*}
\sup_{\bar{M}} \lvert u \rvert \leqslant C_{1} \lVert F \rVert_{H^{s - 2}(M, g)}
\end{equation*}
with $ s = \frac{n}{2} + 1 $ when $ n $ is even or $ s = \frac{n + 1}{2} $ when $ n $ is odd. Then, same as in Theorem 4.4 of \cite{XU3}, pick up some $ C > 1 $ we choose can choose $ F $ such that
\begin{equation*}
F(q) \leqslant - \frac{C}{2}, \lvert u' \rvert \leqslant \frac{C}{8} \; {\rm in} \; \bar{M}.
\end{equation*}
Finally we choose
\begin{equation}\label{yamabe:eqn16}
u : = u' + \frac{C}{4}.
\end{equation}
It follows that this positive function $ u \in \left[ \frac{C}{8}, \frac{3C}{8} \right], u \in \calC^{\infty}(\bar{M}) $ and $ u $ solves (\ref{yamabe:eqn15}) since constant functions are in the kernel of $ -a\Delta_{g} $ with Neumann boundary condition. Using the choice of $ u $ in (\ref{yamabe:eqn16}), we define
\begin{equation}\label{yamabe:eqn17}
\begin{split}
\tilde{h} & = \frac{p - 2}{2} u^{\frac{2}{p}} \left( \frac{\partial u}{\partial \nu} + \frac{2}{p - 2} h_{g} u \right) \; {\rm on} \; \partial M; \\
\tilde{S} & = u^{1 - p} \left( -a\Delta_{g} u + S_{g} u \right) \; {\rm in} \; M.
\end{split}
\end{equation}
The first line in (\ref{yamabe:eqn17}) says
\begin{equation*}
\tilde{h} = \frac{p - 2}{2} u^{\frac{2}{p}} \left( 0 + \frac{2}{p - 2} h_{g} u \right) =  u^{\frac{2}{p} + 1} h_{g}.
\end{equation*}
Since $ u > 0 $, hence at each point of $ \partial M $, the sign of $ \tilde{h} $ is the same as the sign of $ h_{g} $. From the second line of (\ref{yamabe:eqn17}), we see that at the point  $ q $, 
\begin{equation*}
\tilde{S}(q) = u(q)^{1-p} \left( F(q) + S_{g}(q) u_{q} \right) \leqslant u(q)^{1 - p} \left( F(q) + \sup_{\bar{M}} \lvert S_{g} \rvert \sup_{\bar{M}} \lvert u \rvert \right) \leqslant u(q)^{1 - p} \left(-\frac{C}{2} + \frac{3C}{8} \right) < 0.
\end{equation*}
Lastly, we notice that since a real, positive $ u \in \calC^{\infty}(\bar{M}) $ solves the boundary value problem (\ref{yamabe:eqn17}), there exists a conformal metric $ \tilde{g} = u^{p-2} g $ associated with $ \tilde{S} $ and $ \tilde{h} $, where $ \tilde{S} $ and $ \tilde{h} $ has desired properties.
\end{proof}
\medskip

There is an immediate consequence with respect to some metric $ g $ associate with $ S_{g} \leqslant 0 $ everywhere.
\begin{corollary}\label{yamabe:cor3}
Let $ (\bar{M}, g) $ be a compact manifold with smooth boundary. Let $ S_{g} \leqslant 0 $ everywhere. There exists a conformal metric $ \tilde{g} $ associated with scalar curvature $ \tilde{S} $ and mean curvature $ \tilde{h} $ such that $ \tilde{S} > 0 $ somewhere, and $ \text{sgn}(h_{g}) = \text{sgn}(\tilde{h}) $ pointwise on $ \partial M $.
\end{corollary}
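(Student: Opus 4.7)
The plan is to mirror the proof of Theorem \ref{yamabe:thm6}, with the key sign flips: where that theorem used an $F$ that is \emph{very negative} at an interior point in order to push $\tilde{S}$ below zero at that point, I will use an $F$ that is \emph{very positive} at an interior point. Since $S_{g} \leqslant 0$ everywhere and we want $\tilde S$ to be \emph{positive} somewhere, the construction is clean: at a chosen point $q \in M$, both $F(q)$ (large positive) and $-S_g(q)u(q)$ (nonnegative) contribute with the same sign.

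First, after rescaling, I may assume $\lvert S_{g} \rvert \leqslant 1$ on $\bar M$. Using the same construction as in Theorem \ref{yamabe:thm6} (which simply requires the existence of a smooth bump-like function), I pick some $C > 1$ and some $F \in \calC^{\infty}(\bar M)$ satisfying $\int_{M} F \, d\omega = 0$, $F(q) \geqslant C/2$ at an interior point $q \in M$, and $\lVert F \rVert_{H^{s-2}(M, g)}$ sufficiently small with $s = \frac{n}{2}+1$ for $n$ even and $s = \frac{n+1}{2}$ for $n$ odd. Solve the Neumann problem
\begin{equation*}
-a\Delta_{g} u' = F \; {\rm in} \; M, \quad \frac{\partial u'}{\partial \nu} = 0 \; {\rm on} \; \partial M,
\end{equation*}
which is solvable since $\int_{M} F \, d\omega = 0$, and is unique up to additive constants. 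Normalize $u'$ to have zero mean so that the Poincar\'e inequality yields $\lVert u' \rVert_{\calL^{2}} \leqslant C_{0} \lVert \nabla_g u' \rVert_{\calL^{2}} \leqslant C_0 \lVert F \rVert_{\calL^{2}}$, and then standard elliptic regularity gives $\sup_{\bar M} \lvert u' \rvert \leqslant C_{1} \lVert F \rVert_{H^{s-2}(M, g)} \leqslant C/8$ by choosing $F$ small enough.

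Now set
\begin{equation*}
u := u' + \frac{C}{4}, \qquad u \in \left[\tfrac{C}{8}, \tfrac{3C}{8}\right] \subset (0, \infty),
\end{equation*}
and define $\tilde{g} = u^{p-2} g$. Since constants lie in the kernel of the Neumann operator, $u$ still solves $-a\Delta_g u = F$ with $\partial u/\partial \nu = 0$. By the transformation laws (\ref{yamabe:eqn17}), I compute
\begin{equation*}
\tilde{h} = \frac{p-2}{2} u^{\frac{2}{p}} \left( \frac{\partial u}{\partial \nu} + \frac{2}{p-2} h_g u \right) = u^{\frac{2}{p}+1} h_g,
\end{equation*}
so $\text{sgn}(\tilde h) = \text{sgn}(h_g)$ pointwise on $\partial M$ because $u > 0$. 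At the interior point $q$, using $S_g(q) \leqslant 0$ and $u(q) > 0$,
\begin{equation*}
\tilde{S}(q) = u(q)^{1-p} \bigl( F(q) + S_g(q) u(q) \bigr) \geqslant u(q)^{1-p} \bigl( F(q) - \sup_{\bar M}\lvert S_g\rvert \sup_{\bar M} u \bigr) \geqslant u(q)^{1-p} \left( \tfrac{C}{2} - \tfrac{3C}{8} \right) > 0.
\end{equation*}
Hence $\tilde{g} = u^{p-2}g$ is a conformal metric with $\tilde{S}(q) > 0$ and $\text{sgn}(\tilde h) = \text{sgn}(h_g)$, as required.

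There is no serious obstacle here; the entire argument is a sign-reversed copy of Theorem \ref{yamabe:thm6}, and the mild point to be careful about is only the direction of the inequalities: because $S_g$ is now nonpositive, the term $S_g(q) u(q)$ already has the ``right'' sign and no longer needs to be dominated, which actually makes the estimate at $q$ slightly more comfortable than in Theorem \ref{yamabe:thm6}.
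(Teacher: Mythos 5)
Your proof is correct and takes exactly the paper's approach: the paper's own proof of this corollary is a one-liner instructing to replace $F$ by $-F$ in Theorem~\ref{yamabe:thm6}, and your step-by-step unpacking of that sign flip matches. One small caution about your framing, though not about the estimate itself: you remark that $S_g(q)u(q)$ now ``has the right sign and no longer needs to be dominated,'' but since $S_g(q)\leqslant 0$ and $u(q)>0$ the product $S_g(q)u(q)$ is nonpositive, which still pushes $F(q)+S_g(q)u(q)$ \emph{downward} when you want it positive; the situation is symmetric to Theorem~\ref{yamabe:thm6} and you do still need $F(q)$ to dominate $\sup_{\bar M}\lvert S_g\rvert\,\sup_{\bar M}u$, which your displayed inequality in fact does correctly.
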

\begin{proof} Choosing $ F' = -F $ as above and everything follows exactly the same as in Theorem \ref{yamabe:thm5}.
\end{proof}
\medskip

% OK, last of the last.
With the help of Theorem \ref{yamabe:thm3} and \ref{yamabe:thm5}, we can handle the general case when $ \eta_{1} > 0 $ with arbitrary $ h_{g} $ and $ S_{g} $.
\begin{theorem}\label{yamabe:thm7}
Let $ (\bar{M}, g) $ be a compact manifold with smooth boundary. When $ \eta_{1} > 0 $, there exists some $ \lambda > 0 $ such that (\ref{yamabe:eqn1}) has a real, positive solution $ u \in \calC^{\infty}(\bar{M}) $.
\end{theorem}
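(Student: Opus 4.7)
The plan is to reduce the general $\eta_{1} > 0$ case to the special scenario in Theorem \ref{yamabe:thms} by a two-step conformal change, exploiting the fact that the sign of the first eigenvalue is a conformal invariant (Proposition \ref{boundary:prop1}). The key observation is that Theorem \ref{yamabe:thms} requires two extra hypotheses beyond $\eta_{1} > 0$, namely $h_{g} > 0$ everywhere on $\partial M$ and $S_{g} < 0$ somewhere in $M$; I need to arrange both by conformal change, carefully in the correct order so that the second change does not undo the first.

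First I would apply Theorem \ref{yamabe:thm3} to produce a conformal metric $g_{1} = u_{1}^{p-2} g$, with $u_{1} \in \calC^{\infty}(\bar{M})$ real and positive, whose mean curvature $h_{g_{1}}$ is strictly positive everywhere on $\partial M$. By Proposition \ref{boundary:prop1}, the first eigenvalue of $\Box_{g_{1}}$ with the Robin condition $B_{g_{1}} = 0$ is still positive. If in addition the scalar curvature $S_{g_{1}}$ is already negative somewhere in $M$, Theorem \ref{yamabe:thms} applies directly to $(\bar{M}, g_{1})$ and yields $v \in \calC^{\infty}(\bar{M})$, $v > 0$, so that $\tilde{g} = v^{p-2} g_{1} = (u_{1} v)^{p-2} g$ has constant positive scalar curvature $\lambda$ and minimal boundary.

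The only remaining subcase is when $S_{g_{1}} \geqslant 0 $ everywhere on $\bar{M}$. Here I would invoke Theorem \ref{yamabe:thm6} applied to $(\bar{M}, g_{1})$: it yields a further conformal metric $g_{2} = u_{2}^{p-2} g_{1}$ with $u_{2} \in \calC^{\infty}(\bar{M})$ real and positive, such that the new scalar curvature $S_{g_{2}}$ is negative somewhere in $M$, and such that $\text{sgn}(h_{g_{2}}) = \text{sgn}(h_{g_{1}})$ pointwise on $\partial M$. Because $h_{g_{1}} > 0$ everywhere, the latter condition forces $h_{g_{2}} > 0$ everywhere, so the key positivity of the mean curvature achieved in the first conformal change is preserved. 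Proposition \ref{boundary:prop1} again guarantees that the first eigenvalue of $\Box_{g_{2}}$ with $B_{g_{2}} = 0$ stays positive.

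At this stage $(\bar{M}, g_{2})$ satisfies all the hypotheses of Theorem \ref{yamabe:thms}, so there exists $v \in \calC^{\infty}(\bar{M})$ real and positive and some $\lambda > 0$ with
\begin{equation*}
-a\Delta_{g_{2}} v + S_{g_{2}} v = \lambda v^{p-1} \; {\rm in} \; M, \quad \frac{\partial v}{\partial \nu_{g_{2}}} + \frac{2}{p-2} h_{g_{2}} v = 0 \; {\rm on} \; \partial M.
\end{equation*}
Composing the three conformal factors gives $\tilde{g} = v^{p-2} u_{2}^{p-2} u_{1}^{p-2} g = (u_{1} u_{2} v)^{p-2} g$, a conformal metric to the original $g$ with constant positive scalar curvature $\lambda$ and minimal boundary; that is, $u_{1} u_{2} v \in \calC^{\infty}(\bar{M})$ is a real, positive, smooth solution of (\ref{yamabe:eqn1}). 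The only conceptual point to be careful about is the ordering of the two conformal changes: one must adjust the mean curvature first (Theorem \ref{yamabe:thm3}) and the scalar curvature second (Theorem \ref{yamabe:thm6}), because Theorem \ref{yamabe:thm6} preserves the sign of $h$ pointwise but not the sign of $S$, whereas Theorem \ref{yamabe:thm3} offers no control over the resulting $\tilde{S}$. This ordering is what makes the reduction to Theorem \ref{yamabe:thms} work.
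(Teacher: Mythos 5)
Your proposal is correct and matches the paper's proof in substance: both reduce to Theorem~\ref{yamabe:thms} by first applying Theorem~\ref{yamabe:thm3} to make the mean curvature everywhere positive and then, if needed, Theorem~\ref{yamabe:thm6} to make the scalar curvature negative somewhere while preserving the sign of $h$, with Proposition~\ref{boundary:prop1} keeping $\eta_1 > 0$ throughout. Your version is if anything cleaner — you apply Theorem~\ref{yamabe:thm3} unconditionally instead of the paper's three-way case split, and you correctly cite Theorem~\ref{yamabe:thms} (the limiting existence theorem) where the paper's proof appears to have a typo referencing Theorem~\ref{yamabe:thm5} (the perturbed-equation lemma).
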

\begin{proof} Start with the metric $ g $, we discuss the solvability in three cases:

(i) When $ S_{g} < 0 $ somewhere and $ h_{g} > 0 $ everywhere on $ \partial M $, this is exactly Theorem \ref{yamabe:thm5}.

(ii) When $ S_{g} \geqslant 0 $ everywhere and $ h_{g} > 0 $ everywhere on $ \partial M $. In this case, we apply Theorem \ref{yamabe:thm6} and find out some $ \tilde{g}_{1} = u^{p-2} g $ such that $ \tilde{S}_{1} < 0 $ somewhere and $ \tilde{h} > 0 $ everywhere; then by Theorem \ref{yamabe:thm5}, there exists some real, positive $ v \in \calC^{\infty}(\bar{M}) $ such that $ \tilde{g} = v^{p-2} \tilde{g}_{1} $ associates with constant scalar curvature and the boundary is minimal. Therefore,
\begin{equation*}
\tilde{g} = v^{p-2} \tilde{g}_{1} = v^{p-2} u^{p-2} g = (uv)^{p-2} g
\end{equation*}
is the desired conformal change.

(iii) When either $ S_{g} < 0 $ somewhere or $ S_{g} \geqslant 0 $ everywhere, and $ h_{g} $ changes sign on $ \partial M $. In this case, we first apply Theorem \ref{yamabe:thm3} and obtain some $ \tilde{g}_{1} = u^{p-2} g $ with $ \tilde{h}_{1} > 0 $ everywhere. If $ \tilde{S}_{1} < 0 $ somewhere then we apply Theorem \ref{yamabe:thm5} directly to get some metric with constant scalar curvature, two steps. If not, i.e. $ \tilde{S}_{1} \geqslant 0 $ everywhere, then we apply Theorem \ref{yamabe:thm6} to get some $ \tilde{g}_{2} = v^{p-2} \tilde{g}_{1} $ such that $ \tilde{S}_{2} < 0 $ somewhere and we still have $ \tilde{h}_{2} > 0 $ everywhere on $ \partial M $. Lastly, applying Theorem \ref{yamabe:thm5} and thus we get $ \tilde{g}_{3} = w^{p-2} \tilde{g}_{2} $ which admits a constant scalar curvature with minimal boundary. Therefore
\begin{equation*}
\tilde{g}_{3} = w^{p-2} \tilde{g}_{2}= w^{p-2}v^{p-2} \tilde{g}_{1} = w^{p-2}v^{p-2}u^{p-2} g = (uvw)^{p-2} g
\end{equation*}
has the desired property.
\end{proof}

\section*{Acknowledgement}
The author would like to thank his advisor Prof. Steven Rosenberg for his great support and mentorship. The author also owes thanks to Prof. Richard Melrose for learning many PDE and geometric skills from his papers, books, notes, and especially many courses taken in MIT instructed by him.

\bibliographystyle{plain}
\bibliography{Yamabess}

\end{document}